\newtheorem{theorem}{Theorem}[section]
\newtheorem{corollary}[theorem]{Corollary}
\newtheorem{lemma}[theorem]{Lemma}
\newtheorem{proposition}[theorem]{Proposition}
\newtheorem{assumption}[theorem]{Assumption}
\theoremstyle{definition}
\newtheorem{definition}[theorem]{Definition}
\theoremstyle{remark}
\newtheorem{remark}[theorem]{\textbf{Remark}}
\newtheorem{example}[theorem]{\textbf{Example}}
\newcommand{\E}{\mathbb{E}}
\renewcommand{\P}{\mathbb{P}}
\newcommand{\Q}{\mathbb{Q}}
\newcommand{\df}{\mathrm{d}}
\newcommand{\di}{\mathrm{d}}
\newcommand{\wti}{\widetilde}
\newcommand{\pian}[2]{\dfrac{\partial #1}{\partial #2}}
\newcommand{\piann}[2]{\dfrac{\partial^{2}#1}{\partial #2^{2}}}
\newcommand{\R}{\mathbb{R}}
\newcommand{\Lc}{\mathcal{L}}
\newcommand{\Qb}{\mathbb{Q}}
\newcommand{\as}{a.s.}
\newcommand{\eg}{e.g.}
\newcommand{\cf}{c.f.}
\newcommand{\dx}{\mathrm{d}x}
\newcommand{\dy}{\mathrm{d}y}
\newcommand{\ds}{\mathrm{d}s}
\newcommand{\dt}{\mathrm{d}t}
\newcommand{\dz}{\mathrm{d}z}
\newcommand{\F}{\mathcal{F}}
\newcommand{\Fb}{\mathbb{F}}
\newcommand{\Fc}{\mathcal{F}}
\newcommand{\Tc}{\mathcal{T}}
\newcommand{\Uc}{\mathcal{U}}
\newcommand{\ind}{\boldsymbol{1}}
\DeclareMathOperator{\supp}{supp}
\newcommand{\Nb}{\mathbb{N}}
\newcommand{\Zb}{\mathbb{Z}}
\newcommand{\Tb}{\mathbb{T}}
\newcommand{\Pc}{\mathcal{P}}
\newcommand{\Ac}{\mathcal{A}}
\newcommand{\Wc}{\mathcal{W}}
\definecolor{orange}{rgb}{1,0.3,0.2}
\DeclareMathOperator*{\argmax}{arg\,max}
\title[Measure-valued HJB perspective on optimal adaptive control]{A measure-valued HJB perspective on Bayesian optimal adaptive control}
\author{Alexander M.~G.~Cox}
\thanks{Alexander M.~G.~Cox, Department of Mathematical Sciences, University of Bath, Bath, U.~K..\\ e-mail: \texttt{a.m.g.cox@bath.ac.uk}
%, web: \texttt{http://www.maths.bath.ac.uk/$\sim$mapamgc/}
}
\author{Sigrid K{\"a}llblad}
\thanks{Sigrid K{\"a}llblad, Department of Mathematics, KTH Royal Institute of Technology, Stockholm, Sweden. e-mail: \texttt{sigrid.kallblad@math.kth.se}.
The author gratefully acknowledges financial support from the Swedish Research Council (VR)
under grant 2020-03449.
}
\author{Chaorui Wang}
\thanks{Chaorui Wang, Department of Mathematical Sciences, University of Bath, Bath, U.~K..\\ e-mail: \texttt{cw2581@bath.ac.uk}.
%The author gratefully acknowledges financial support from the SAMBa Centre for Doctoral Training.
The author is supported by a scholarship from the EPSRC Centre for Doctoral Training in Statistical Applied Mathematics at Bath (SAMBa) under the project EP/S022945/1 and the Chinese Council Scholarship.
}
\date{\today}
\begin{document}

\begin{abstract}
  We consider a Bayesian adaptive optimal stochastic control problem where a hidden static signal has a \emph{non-separable} influence on the drift of a noisy observation. Being allowed to control the specific form of this dependence, we aim at optimising a cost functional depending on the posterior distribution of the hidden signal. Our setup is in sharp contrast to existing work: we include costs that depend on the full posterior distribution in a form that admits a large class of non-linear relationships. Expressing the dynamics of this posterior distribution in the observation filtration, we embed our problem into a genuinely infinite-dimensional stochastic control problem using measure-valued martingales. We address this problem by use of viscosity theory and approximation arguments. Specifically, we show equivalence to a corresponding weak formulation, characterise the optimal value of the problem in terms of the unique continuous viscosity solution of an associated HJB equation, and construct a piecewise constant and arbitrarily-close-to-optimal control to our main problem of study. As a byproduct of our analysis, we also provide a novel stability result for a class of measure-valued SDEs which we believe is of independent interest.
\end{abstract}

\maketitle

%\begingroup
%  \hypersetup{hidelinks}
%  \tableofcontents
%\endgroup
%{\hypersetup{linkcolor=black}
%\tableofcontents
%}

\section{Introduction}

We consider a Bayesian adaptive optimal stochastic control problem in continuous time on an infinite horizon.
There is an underlying static signal which cannot be observed directly but only via a noisy observation. 
More precisely, the drift of this observation process has a non-linear dependence on the signal and we are allowed to control the specific form of this dependence.
We take a Bayesian view on the estimation problem and assume that the prior is known and subsequently update our beliefs. 
The aim is to optimise an objective depending both on the control and the posterior distribution of the hidden signal.
The typical interpretation is that we are searching for a hidden `signal' and based on what we've learned so far we may choose different `search actions' which might be more or less effective but at the same time more or less costly.

\subsection*{Problem formulation}

In mathematical terms, given an unobservable random variable $X$ with law $\mu_0$, the observation process is given by         
    \begin{align}\label{eq:intro_dynamics_observation}
        \di Y^u_t=h(u_t,X)\dt+\di W_t,
        \quad Y^u_0=0,
    \end{align}
  where $u$ is the chosen control and the function $h$ specifies the spatial dependence on the signal for different actions. For any given control, the observation process generates a filtration, say $\mathcal{Y}^u$, which is explicitly control dependent. The posterior distribution, also referred to as the filter, is given by
    \begin{align*}
        \pi^u_t=\mathrm{Law}(X|\mathcal{Y}^u_t).
    \end{align*}
Given a discount rate $\beta$ and a running cost $k$ depending on the control and the current state of the posterior distribution, the aim is to optimise an objective of the form
    \begin{align}\label{eq:intro_objective}
        \E\int_0^\infty e^{-\beta t}k(\pi^u_t,u_t)\dt;
    \end{align}
    the optimisation is performed over controls $u$ which are adapted to their own generated filtration $\mathcal{Y}^u$, that is, controls that rely only on the information on $X$ generated from observing $Y^u$. 

\subsection*{Results and methodology}

We address the above problem by use of dynamic programming arguments and stochastic control methods. To this end, we first express the dynamics of the posterior distribution in the observation filtration; specifically, for any bounded continuous function $\varphi$, writing $\pi^u_t(\varphi)$ for $\int\varphi(x)\pi^u_t(\dx)$ etc., we have that 
\begin{align}\label{eq:pi_intro}
    \di\pi^u_t(\varphi)
    =\big[\pi^u_t\big(\varphi(\cdot)h(u_t,\cdot)\big)-\pi^u_t\big(\varphi\big)\pi^u_t\big(h(u_t,\cdot)\big)\big]\di I^u_t,
    \;\;\pi^u_0=\mu_0,
\end{align}
where $I^u$ is a Brownian motion in the observation filtration. The filter can thus be viewed as a controlled measure-valued martingale (MVM) with dynamics driven by a Brownian motion.
We introduce two related formulations: A weak problem formulation where the optimisation is over weak solutions to \eqref{eq:pi_intro}, and an approximate formulation where one restricts to measure-valued processes solving \eqref{eq:pi_intro} for piecewise constant controls. A priori, it is not clear that either of these formulations coincide. 
However, our main result establishes that the value of our original problem, the value of the weak formulation, and the limiting value of the approximate formulation as the time grid is refined, are equal in value. 
\emph{A posteriori}, it is thus clear that the additional information available within the weak formulation does not alter the value of the problem. 
We also establish that the value of the problem can be characterised in terms of the unique continuous viscosity solution of an associated HJB equation.
Finally, we construct a piecewise constant and arbitrarily-close-to-optimal strategy for our main problem of interest.

Our proof bears resemblance to the Barles-Souganides approximation method introduced in \cite{barles_1991}, as well as to the stochastic Perron approach developed in \cite{bayraktar_2013}; it proceeds by establishing that the limiting and weak value functions are sub and super solutions, respectively, of an associated HJB equation, by sandwiching our main problem of interest in between the two, and by applying appropriate comparison results. 
Our method is notably inspired by the contributions in \cite{cohen_2025}. The key feature of our work is that we allow for a more general dependence on the signal in the drift of the observation process; while we restrict to an action space enforcing a certain polynomial structure, we crucially allow for infinite polynomials. In contrast to the problem studied in \cite{cohen_2025}, our problem can therefore \emph{not} be reduced to a finite-dimensional one but calls for a genuinely infinite-dimensional analysis where one works directly with the probability-measure--valued filters. We carry out such an analysis by relying on the viscosity theory developed for stochastic control problems with MVMs in \cite{cox_2024}. In particular, we develop Barles-Souganides type approximation arguments for this infinite-dimensional setup; we find those developments to be of independent interest and hope that they will become useful also in other contexts. In order to establish continuity of the value function, we also provide a result on the stability of controlled MVMs in terms of their initial condition; we find this result too to be of independent interest.

Our main problem of study is notably given in a strong formulation where the observation filtration has an explicit dependence on the control. At the same time, admissible controls are constrained to depend on the information provided by the observation process only. When aiming at picking a `good control' there is thus a trade-off present since such controls must balance improving the estimation and minimizing the cost functional.
The effect of considering strong formulations within stochastic control problems with unobservable components was recently reviewed in \cite{cohen_2025}. In particular, they illuminated the link between controlling the information flow and the notions of \emph{dual effect} (referring to the control's effect on higher order moments of the posterior distribution) %(referring to the interplay between the control's effect on the state and its influence on the estimation of unobservable components)
and the much-studied trade-off between \emph{exploration and exploitation} (referring to the knowledge of the unobservable component contra cost optimisation); we refer to that article for further details and references on the topic.
Following \cite{cohen_2025}, we notably introduce our problem by defining a class of pre-admissible controls and restricting, in turn, to the subset of controls which rely on the information provided by the observations only.
Within the literature on optimal control under partial information, controls that are thus restricted to depend on the observation process only are often referred to as `strict-sense' controls. This is in contrast to so-called `wide-sense' controls which are allowed to depend on a somewhat larger filtration; see \eg{} \cite{benes_1991,elkaroui_1988,fleming_1982}.
It is also common to define admissible controls on some reference probability space and let the controlled dynamics enter the problem via a change of measure (rendering the filtration independent of the chosen control); see \eg{} \cite{bandini_2018,bensoussan_1992,karatzas_1992}.
Our main result verifies that for the problem at hand, such variants of weak formulations would agree in value.

\subsection*{Related literature}

   Bayesian search problems have been studied from a more applied perspective in \eg{} \cite{assaf_1985,ding_2015,fox_2003,yu_2019}.
    This literature aside, there are essentially three streams of
    research to which our problem relates:
    \smallskip
    
    \noindent
     {\bf Bayesian stopping problems:} Motivated by aims similar to ours, closely related problems have been considered within an optimal stopping framework. For example, in \cite{ekstrom_2022}, the authors let an unobservable state influence the drift of the signal process and their aim is to estimate this state as accurately but also as quickly as possible. Considering a Bayesian framework, they formulate it as an optimal stopping problem which they subsequently address. An extended version of such a stopping problem was studied in \cite{ekstrom_2024} to which we refer for further references.

     \smallskip
     \noindent
    {\bf Control problems with hidden state:} Our formulation includes the problem of minimising, over admissible controls, an objective of the form\footnote{Notably there is still a trade-off present when choosing the control since it affects both the cost and the estimation of the unknown signal; as before, this is a consequence of the fact that we consider a strong formulation where the choice of the control influences the available information through the filtration.}
    \begin{align}\label{eq:ext_y}
    \E\int_0^\infty e^{-\beta t}\hat k(X,u_t)\dt
    \end{align}
    for some given cost function $\hat k$; this can be seen by defining $k(\mu,v)=\mu(\hat k(\cdot,v))$, for any probability measure $\mu$ and action $v$.
    Closely related problems have been studied previously in the literature:
    In \cite{benes_1991} and \cite{karatzas_1992}, the authors let the observation process follow the dynamics corresponding to letting $h(v,x)=vx$ in \eqref{eq:intro_dynamics_observation}, for real-valued controls, and the aim is to optimise an objective similar to \eqref{eq:ext_y} but with a running cost depending on the observation process itself; specifically, the cost takes the form $\hat k(Y^u_t)$ for some function $\hat k$ satisfying certain properties. The corresponding finite-horizon problem was studied in \cite{karatzas_1993}.
    More recently, the above-mentioned article \cite{cohen_2025}, considered an observation process whose dynamics is again similar to \eqref{eq:intro_dynamics_observation} but with $h(u_t,X)$ replaced by $b(t,Y^u_t,u_t)X$ for some function $b$ (they allowed for a multi-dimensional signal) and with a similar controlled volatility coefficient appearing in front of the noise term. They then optimised a finite-horizon objective with a running cost of the form $\hat k(t,Y^u_t,X,u_t)$.
    While these papers motivated our work, our setup is different: We crucially allow for a \emph{non-linear} dependence on the signal in the drift of the observation process and a \emph{non-linear} dependence on the posterior distribution in the cost functional. In order to render the presentation focused on the difficulties arising from this non-linearity, we develop the theory for the case where neither the drift nor the cost has any explicit dependence on the observation process.

    \smallskip
    \noindent
    {\bf Stochastic control under partial information:} Our problem also bears resemblance to a large class of problems known as \emph{stochastic control under partial information}. The difference between our setup and this class of problems is that we keep our signal constant while we control the observation rather than the signal itself. In comparison, in the partial observation literature, one is typically interested in a problem where the hidden signal, say $X_t$, $t\ge 0$, follows itself a controlled It{\^o} dynamics while the observation is given by \eqref{eq:intro_dynamics_observation} --- with $X$ replaced by $X_t$ --- for some non-controlled\footnote{Notably, \cite{bandini_2018}, \cite{knochenhauer_2024} and \cite{handel_2007} also allow for controlled observations.} function $h(x)$, $x\in\R$; the aim is to optimise an objective of the form \eqref{eq:ext_y} --- with $X$ replaced by $X_t$. The nomenclature `partially observable' refers to the fact that controls must be adapted to the filtration generated by the observation process; controls given in feedback form in terms of the signal are in general not admissible.
    
    The predominant approach has been to address this problem via a reformulation enabling viewing it as a control problem where the conditional distribution of the hidden signal plays the role of the controlled state variable.
    In its original form, this approach amounts to rewriting the objective onto the form \eqref{eq:intro_objective} where now $\pi^u_t=\mathrm{Law}(X_t|\mathcal{Y}^u_t)$; this filter then solves the so-called \emph{Kushner-Stratonovich (KS) equation} which is similar to \eqref{eq:pi_intro} but with additional (controlled) terms appearing due to the dynamics of $X$ itself. At the cost of the controlled process taking values in the space of probability measures, the resulting problem of optimising \eqref{eq:intro_objective} over solutions to the KS equation is conceptually simpler since it is a fully observable problem; for example, controls given in feedback form in terms of the filter are admissible.
    The strategy of linking the problem of original interest to such an observable control problem is often referred to as the \emph{separation principle}. Provided the latter problem admits an optimiser, the hope is to construct a solution to the original problem too based on that solution. When $X_0$ is Gaussian and the various coefficients appearing in the dynamics of the signal and observation processes are linear, the resulting filter, referred to as the \emph{Kalman-Bucy filter}, is Gaussian too and this problem can be solved explicitly (see e.g. \cite{fleming_1975}, \cite[Chapter~7]{bensoussan_1992} and \cite[Section~7.5]{handel_2007}). The study of the KS equation was also crucial for the analysis in \cite{fleming_1980} and \cite{elkaroui_1988} where the existence of optimal controls was established. %; see also \cite{christopeit_1980} and \cite{haussmann_1982}.
    %While offering a conceptually simple and elegant formulation, this problem is however difficult to solve in practice, and explicit results are few.
    
    A more common line of attack, however, has been to employ the separation principle in conjunction with the unnormalised filter, which is a measure-valued process $\sigma_t$ satisfying $\pi^u_t(\varphi)=\sigma^u_t(\varphi)/\sigma^u_t(1)$, $\varphi\in C_b$. By considering a change of measure rendering the observation process a Brownian motion, one may express the cost functional in terms of this unnormalised filter and also obtain an equation for it, the so-called \emph{Zakai equation}, which is a controlled measure-valued SDE driven by the observation process itself.
    One may again view the associated (separated) optimisation problem as a fully observable stochastic control problem featuring a controlled measure-valued process. Crucially, the Zakai equation turns out to be an easier equation in certain ways explaining the popularity of this method.
    For example, the study of the Zakai equation was a crucial component of the analysis in \cite{fleming_1982}. 
    More pertinently, it is often assumed that the unnormalised conditional distribution admits a density: the Zakai equation then reduces to the so-called \emph{Duncan-Mortensen-Zakai equation} for the density process itself. Within such a setup, there are numerous contributions to date:
    in \cite[Chapter 8]{bensoussan_1992}, the problem is dealt with by use of the stochastic maximum principle as well as via the DPP combined with semigroup arguments; 
    in \cite{hijab_III}, %(\cf{} \cite{hijab_I,hijab_II}) 
    it is proven that the value function is a viscosity solution (albeit in a rather weak sense) %(which does not enable getting comparison).
    to an associated HJB equation;
    and in \cite{lions_II} and \cite{gozzi_2000}, it is assumed that the density process belongs to $L^2$ and an weighted $L^2$-space, %(still a Hilbert space)
    respectively, and it is then shown that the value function is the unique viscosity solution to a corresponding HJB equation. %(see also \cite{lions_I,lions_III}).
    Motivated by similar questions, \cite{martini_2023} established an It\^o formula for solutions to the KS and Zakai equations (see also \cite{martini_2024} where viscosity solutions are discussed for some related equations).

In \cite{bandini_2018}, the authors consider yet a different implementation of the separation principle.
Introducing again the measure making the observations a Brownian motion, they consider the joint distribution under this measure, given the accumulated observations, of its (inverse) Radon-Nikodym derivative and the signal. 
%By changing measure and using the tower property, 
The objective can clearly be rewritten in terms of this probability-measure--valued process. To find an equation for its dynamics, one acknowledges that the Radon-Nikodym derivative solves an SDE driven by the observations. %; if necessary one also rewrites the dynamics for the signal in terms of the observation process.
Effectively, one is thus facing a situation analogous to the one of deriving the KS equation when the only thing being observed is one of the noise processes driving an `enlarged signal'; the associated normalised and unnormalised conditional probabilities then coincide and one easily obtains its dynamics. Again, one may then study the `separated problem' associated with this objective and controlled probability-measure--valued dynamics. In essence, one has obtained a simpler dynamics at the cost of introducing an additional factor. The authors consider a non-Markovian version of this (separated) problem which they address by use of BSDE methods. In a closely related work, \cite{bandini_2019}, the analogous problem is studied in a Markovian setup and it is shown that the value function is then the unique viscosity solution of an associated HJB equation formulated in terms of the so-called Lions derivative.
%; the article notably imposes no assumptions on the existence of a density. 

Finally, in \cite{bayraktar_2025}, a closely related problem where one controls a signal while observing only one of its driving noise processes is studied. In their setup, however, the volatility coefficient in front of the observed noise term does not depend on the signal itself and therefore their formulation does not encompass more general partially observable problems as in \cite{bandini_2018,bandini_2019}. However, their approach is more closely related to ours in the sense that they too establish comparison working directly on the space of probability measures (as opposed to some lifted space). Our results are effectively orthogonal though since the terms appearing in their controlled measure dynamics are complementary to those appearing in \eqref{eq:pi_intro}. %\cite{bayraktar_2025_arxiv}

\subsection*{Structure of the article}

The remainder of the article is organized as follows: In Section 2 we introduce our main problem of interest and express the dynamics of the involved processes in the observation filtration; in Section 3 we introduce our weak problem formulation as well as the piecewise constant approximate formulation and provide our main result; Section 4 presents our stability results and Section 5 is devoted to the proof of our main result.

\subsection*{Notation}

        Throughout, $\mu_0$ will be a given fixed probability measure on $\R$ with compact support. 
        We denote by $\Pc$ the space of probability measures on $\R$ whose support is contained in the support of $\mu_0$; we equip this space with the topology of weak convergence rendering it a Polish space. 
        Since $\mu_0$ has compact support, we can work with e.g. the Wasserstein-$1$ metric, which we denote by $\Wc$. 
        That $\mu_0$ has compact support also implies that the space $\Pc$ is compact.
        We denote by $\Pc^s$ the (closed) subset of probability measures supported in one single point.
        We write $C_b$ for the bounded continuous functions on $\R$. 
        Finally, for any $\mu\in\Pc$ and $\varphi:\R\to\R$, we set $\mu(\varphi):=\int_\R\varphi(x)\mu(\dx)$.

\section{Problem formulation}

\subsection{Controls and objective} \label{sec:formulation}
    Let $(\Omega,\Fc,\Fb,\P)$ be a given filtered probability space satisfying the usual conditions and supporting a Brownian motion $W$ and an independent $\F_0$-measurable random variable $X$ with law $\mu_0$; the latter will play the role of our unknown signal. We suppose throughout that $\mu_0$ has compact support; that is, there exists some $R>0$ such that 
    $$
    \supp(\mu_0)\subset(-R,R).
    $$

    In order to define our \emph{set of actions}, we let $K>0$ be arbitrary but fixed and introduce a closed set $\Uc$ such that
    \begin{align*}
        \Uc\subseteq\left\{v=(v_0,v_1,\dots):\textrm{$v_i\in\R$, $i\in\Zb_+$, and $\sum_{i=0}^\infty (R^iv_i)^2\le K$}\right\};
    \end{align*}
    we here consider the subspace topology inherited from the product topology and note that $\Uc$ thus defined is a compact Polish space.
    We let
    \begin{align}\label{eq:h}
        h(v,x):=\sum_{i=0}^\infty v_ix^i,
        \quad v\in\Uc, 
        \;x\in\supp(\mu_0);
    \end{align}
    we note that this function is well-defined and jointly measurable and that $x\mapsto h(v,x)$ is continuous and bounded on $\supp(\mu_0)$ uniformly in $v$ (\cf{} Lemma~\ref{lem:U} below).
    
    Our controls will be progressively measurable processes taking their values in this action space.
    More pertinently, we first define a set of pre-admissible controls by
    $$
        \Ac^{pre}:=\left\{\textrm{$u:\Omega\times[0,\infty)\to\Uc$: $u$ is $\Fb$-progressively measurable}\right\}.
    $$
    For $u\in\Ac^{pre}$, the observation process $Y^u=(Y^u_t)_{t\ge 0}$ is then given by
    \begin{align}\label{eq:y}
    \di Y^u_t=h(u_t,X)\dt+\di W_t,
    \quad Y^u_0=0.
    \end{align}
    For $u\in\Ac^{pre}$, we let $\mathcal{Y}^u=(\mathcal{Y}^u_t)_{t\ge 0}$ be the (control-dependent) observation filtration; that is, the filtration generated by the process $Y^u$ and completed by the $\P$-null sets.
    The set of \emph{admissible controls} is finally defined as
    $$
     \Ac:=\left\{u\in\Ac^{pre}:\textrm{$u$ is $\mathcal{Y}^u$-progressively measurable}\right\};
    $$
    for further discussion on how this type of closed-loop controls relate to alternative definitions, we refer to \cite[Remark~2.4]{cohen_2025}.

    We consider a discount rate $\beta>0$ and a measurable \emph{cost function} $k:\Pc\times\Uc\to\R$ to be given;
    throughout, we impose the following \emph{standing assumption}:
    \begin{assumption}\label{ass:U}
    The cost function $k$ is bounded and $k(\cdot,v)$ is continuous on $\Pc$ uniformly in $v\in\Uc$. 
    \end{assumption}
    For $u\in\Ac$, we define the posterior distribution $\pi^u=(\pi^u_t)_{t\ge 0}$ by
    \begin{align}\label{eq:filter}
    \pi^u_t=\mathrm{Law}(X|\mathcal{Y}^u_t);
    \end{align}
    note that $\pi^u$ takes values in $\Pc$ and that $\pi^u_0=\mu_0$.
    With each $u\in\Ac$, we then associate the following \emph{cost functional}:
    \begin{align}\label{eq:objective_filtering}
    J(u)=\E\int_0^\infty e^{-\beta t}k(\pi^u_t,u_t)\dt.
    \end{align}
    Our main problem of interest is the one of minimising $J(u)$ over all $u\in\Ac$.

    To further illustrate our setup, we give an example of a control problem which will fit into our setting, and for which our main results therefore are valid.
    
    \begin{example} \label{ex:sensor}
        Consider the following scenario: player A seeks to determine the (unknown and stationary) location of player B, denoted by $X$, without revealing his own position. The location of player B is known to lie within $I:=[-1, 1]$, i.e. the $X$ is compactly supported on $I$. Player A is equipped with a sensor characterised by  
        \begin{equation}\label{eq:example h}
            \tilde{h}(\tilde{v},x) = e^{-\lambda(x-\tilde{v})^2}, \quad 0<\lambda<\infty, \quad \tilde{v},x \in I, 
        \end{equation}
        where $\tilde{v}$ denotes the location where player A places his sensor. The strength of the signal he receives is then determined by the distance between his sensor and the true location $x$, with the strength decaying exponentially as the squared distance increases. 
        
        Consequently, the information available to player A evolves according to the stochastic process  
        $$
            \df Y^{\tilde{u}}_t = e^{- \lambda (X - \tilde{u}_t)^2} \df t + \df W_t, \quad Y^{\tilde{u}}_0 = 0,
        $$
        where $W$ is a standard Brownian motion independent of $X$, and where player A selects his position $\tilde{u}_t \in I$ at each time $t$ based on the observations collected up to that time. Player A begins with a prior distribution $\mu_0$ supported on $I$. The corresponding posterior distribution, or the filter, $\pi^{\tilde{u}}$ is defined as in \eqref{eq:filter}. Player A then considers an objective similar to \eqref{eq:objective_filtering} with a running cost $\tilde{k}: \mathcal{P} \times I \to \mathbb{R}$ defined by 
        \begin{equation}\label{eq:example cost function}
            \tilde{k}(\mu, \tilde{v})
            = 
            F \cdot \int_{[-1,1]} e^{-\theta (x-\tilde{v})^2} \mu(\mathrm{d}x)
            + 
            G \cdot  \mathbb{V}\text{ar}_{\mu}(\mathrm{id}), 
            \quad F,\ G \geq 0, 
        \end{equation}
        where $\mathbb{V}\text{ar}_{\mu}(\mathrm{id}):=\mu(\mathrm{id}^2) - \mu(\mathrm{id})^2$, the coefficient $F$ represents the fear associated with exposure, and $G$ the cost to player A of uncertainty in their knowledge of the true location of player B; we assume player B has the same type of sensor as player A, possibly with a different sensitivity parameter $\theta>0$. Hence, the function $\tilde{k}$ encodes the trade-off between fear and reward: the first term penalises being too close to player B, while the second term rewards a reduction in uncertainty about player B's location.  
        
        We anticipate a sensible expansion of the form $\tilde{h}(\tilde{v}, x) = \sum_{i=0}^{\infty} C_i(\tilde{v}) x^i$ allowing us to identify our set of controls as taking the form appropriate for the definitions above. In Appendix~\ref{sec:example_h}, we rigorously verify such an expansion and show that under this reformulation, the problem satisfies all the conditions imposed above. It will then follow from our main result that we can characterise the value of this problem as the unique continuous viscosity solution of a measure-valued HJB equation, and construct an $\varepsilon$-optimal piecewise constant control in feedback form.
    \end{example}

    \subsection{MVMs and SDEs}\label{sec:mvm_sde}
    
    For $u\in\Ac$, it holds that $\pi^u(f)$ is a martingale in the observation filtration, for any $f\in C_b$, and $\pi^u$ is therefore a so-called measure-valued martingale (MVM); see \cite[Definition~2.1]{cox_2017} or \cite{bayraktar_2018}, and also \eg{} \cite{dawson_1993} for an earlier account.
    More pertinently, taking the signal to be constant in the classical (controlled) Kushner-Stratonovich equation (see e.g. \cite[Proposition~7.2.8]{handel_2007}), we have that $\pi^u$ is the solution to\footnote{
    A-priori, setting $\Lc\equiv 0$ in \cite[Proposition~7.2.8]{handel_2007}, we obtain \eqref{eq:pi} (or, equivalently, \eqref{eq:ks}) for $\varphi\in C^2_b$ with all of its derivatives bounded. However, for any $\varphi\in C_b$, there exists a sequence $\{\varphi_n\}$ belonging to this class and converging pointwise to $\varphi$.
    Since, by Lemma~\ref{lem:U} below, $\{h(v,x):v\in\Uc,x\in\supp(\mu_0)\}$ is a bounded subset of $\R$,
    \begin{align*}
    \E\int_0^t\left(
    \int_\R
    (\varphi-\varphi_n)(x)\left(h(u_s,x)-\pi^u_s\left(h(u_s,\cdot)\right)\right)\pi^u_s(\dx)
    \right)^2\ds\xrightarrow[n\to\infty]{}0,
    \end{align*}
    which implies that \eqref{eq:pi} holds for each $\varphi\in C_b$.
    }
    \begin{align}\label{eq:ks}
    \di\pi^u_t(\varphi)
    =\big(\pi^u_t\big(\varphi(\cdot)h(u_t,\cdot)\big)-\pi^u_t\big(\varphi\big)\pi^u_t\big(h(u_t,\cdot)\big)\big)\big(\di Y^u_t
    -\pi^u_t(h(u_t,\cdot))\dt\big),
    \;\;\varphi\in C_b,
    \end{align}
    equipped with the initial condition $\pi^u_0=\mu_0$.
    Following the filtering literature, for $u\in\Ac$, we define the innovations process by 
    $$
    \di I^u_t=\left(h(u_t,X)-\pi^u_t(h(u_t,\cdot))\right)\dt+\di W_t,
    \quad I^u_0=0;
    $$ 
    it is a $(\mathcal{Y}^u,\P)$-Brownian motion. 
    Note that \eqref{eq:y} and \eqref{eq:ks} can then be re-written as
    \begin{align}\label{eq:y_innovation}
    \di Y^u_t=\pi^u_t(h(u_t,\cdot))\dt+\di I^u_t,
    \quad Y^u_0=0.
    \end{align}
    and
    \begin{align}\label{eq:pi}
    \di\pi^u_t(\varphi)
    =\big(\pi^u_t\big(\varphi(\cdot)h(u_t,\cdot)\big)-\pi^u_t\big(\varphi\big)\pi^u_t\big(h(u_t,\cdot)\big)\big)\di I^u_t,
    \;\varphi\in C_b,
    \;\;\pi^u_0=\mu_0;
    \end{align}
    the process $\pi^u$ can thus be viewed as a controlled MVM of the specific form considered in \cite{cox_2024} (\cf{} equation (2.3) therein).
    
    The links to MVMs and the above equations will become crucial for our upcoming analysis.
    Indeed, the benefit of linking our problem to equation \eqref{eq:pi} is that it enables relating the problem to a more standard (albeit infinite-dimensional) stochastic control problem featuring a controlled state-process being driven by a Brownian motion; the idea being that if one manages to construct a solution to this auxiliary (separated) problem, one should be able to construct a solution to the original problem too. 
    To see this, suppose for illustrative purposes that we have found a control in feedback form provided by a function $\hat u:\Pc\to\Uc$ for which the SDE
    \begin{align}\label{eq:sln_aux_new}
    \di\pi_t(\varphi)
    & =\left(\pi_t\left(\varphi(\cdot)h\left(\hat u(\pi_t),\cdot\right)\right)-\pi_t\left(\varphi\right)\pi_t\left(h\left(\hat u(\pi_t),\cdot\right)\right)\right)\di B_t,
    \;\varphi\in C_b,
    \;\;\pi_0=\mu_0,
    \end{align}
    admits a pathwise unique solution. Provided (i) that $\hat u$ is optimal for the problem of optimising the objective over solutions to \eqref{eq:sln_aux_new} for some suitable class of controls; (ii) that the value of this problem coincides with the value of our original problem of interest; and (iii) that by relying on uniqueness in law for \eqref{eq:sln_aux_new}, one may construct a control $u^*\in\Ac$ for which the objective attains the same value as when evaluated at the solution of this equation; then it would follow that this $u^*$ were optimal for the original problem. In the upcoming analysis, the role of $\hat u$ will be played by a piecewise constant control (in feedback form on a fine enough time grid) for which step (iii) simplifies considerably. In this way, we construct a piecewise constant and arbitrarily-close-to-optimal strategy for our main problem of interest. We will verify step (ii) by arguing that the value of the respective problems solve the same HJB equation; this will also give us further information about the value of the problem.

    One question is whether the reduction to piecewise constant controls (which is an approach we take from \cite{cohen_2025}) is necessary, and whether one could work directly with a control derived in feedback form. However it appears to be more difficult to verify that an arbitrary feedback control will be adapted to the filtration which arises from implementing the control, and hence that the control is admissible. This has been shown in much simpler settings however, see \eg{} \cite[Proposition~7.3.9]{handel_2007} for a result of this type in the much simplified Kalman-Bucy setting.

\begin{remark}[A finite-dimensional reduction]
    In \cite{cohen_2025}, the authors consider a setup closely related to ours: The dynamics of their observation process follows \eqref{eq:y} with $h(u_t,X)$ replaced by $b(t,Y^u_t,u_t)X$ for some function $b$ (they also let the signal be multi-dimensional and allow for a controlled volatility coefficient).
    For this setup, the unnormalised filter is given by $\sigma^u_t(\varphi)=\E^{\Qb^u_t}[\varphi(X)\Lambda^u_t|\mathcal{Y}^u_t]$, $\varphi\in C_b$, where $\Lambda^u_t=\di\P/\di\Q^u_t$ and $\Qb^u_t$ is the measure rendering $Y^u$ a Brownian motion on $[0,t]$; that is, $\Lambda^u_t=\mathcal{E}(X\Upsilon^u)_t$ with
    \begin{align}\label{eq:upsilon_new}
        \di\Upsilon^u_t=b(t,Y^u_t,u_t)\di Y^u_t,
        \quad\Upsilon^u_0=0.
    \end{align}
    Defining
    $ 
    F[\varphi](v,\gamma)=\int\varphi(x)e^{xv-\frac{1}{2}x^2\gamma}\mu_0(\dx),
    $
    it follows that
    \begin{align*}
        \sigma^u_t(\varphi)
        =\E^{\Qb^u_t}\big[\varphi(X)e^{X\Upsilon^u_t-\frac{1}{2}X^2\langle \Upsilon^u\rangle_t}|\mathcal{Y}^u_t\big]
        =F[\varphi]\left(\Upsilon^u_t,\langle\Upsilon^u\rangle_t\right).
    \end{align*}

    Note that, by It{\^o}'s formula, %and making use of the fact that $\frac{1}{2}F[\varphi]_{vv}+F[\varphi]_\gamma=0$ and $F[\varphi]_v=F[\varphi\cdot\mathrm{id}]$,
    \begin{align}\label{eq:dyn_rho_new}
        \di\sigma^u_t(\varphi)
        =F[\varphi]_v\left(\Upsilon^u_t,\langle\Upsilon^u\rangle_t\right)\di\Upsilon^u_t
        =\sigma^u_t(\varphi\cdot\mathrm{id})b(t,Y^u_t,u_t)\di Y^u_t,
    \end{align}
    which is what the Zakai equation reduces to in this setup.
    Moreover, in this setting the innovations process is given by 
    $$
    \di I^u_t=\left(X-\pi^u_t(\mathrm{id})\right)b(t,Y^u_t,u_t)\dt+\di W_t,
    \quad I^u_0=0,
    $$ 
    and in consequence, 
    \begin{align}\label{eq:y_sam_new}
    \di Y^u_t&=\frac{F[\mathrm{id}]}{F[1]}\left(\Upsilon^u_t,\langle\Upsilon^u\rangle_t\right)b(t,Y^u_t,u_t)\dt+\di I^u_t,
    \quad Y^u_0=0.
    %\di\Upsilon^u_t&=\frac{F[\mathrm{id}]}{F[1]}(\Upsilon^u_t,\langle\Upsilon^u\rangle_t)\frac{b^2}{\sigma^2}(t,Y^u_t,u_t)\dt+\frac{b}{\sigma}(t,Y^u_t,u_t)\di I^u_t,
    %\quad Y^u_0=0,\\
    %\di\langle\Upsilon^u\rangle_t&=\frac{b^2}{\sigma^2}(t,Y^u_t,u_t)\dt,
    %\quad Y^u_0=0.
    \end{align}
    Making use of \eqref{eq:dyn_rho_new}, \eqref{eq:y_sam_new} and applying again It{\^o}'s formula, it follows that
    \begin{align*}
        \di\pi^u_t(\varphi)
        =
        \left(\pi^u_t(\varphi\;\mathrm{id})-\pi^u_t(\varphi)\pi^u_t(\mathrm{id})\right)b(t,Y^u_t,u_t)\di I^u_t;
    \end{align*}
    for $b(t,Y^u_t,u_t)=b(u_t)$, this is precisely what \eqref{eq:pi} reduces to when $h(u_t,X)=b(u_t)X$.
 
    Crucially, the filter is here fully characterised by the current state of the processes $\Upsilon^u$ and $\langle\Upsilon^u\rangle$. 
    In \cite{cohen_2025}, this is carefully exploited: They express their objective --- the finite-horizon analogue of \eqref{eq:ext_y} with $\hat k(X,u_t)$ replaced by $\hat k(t,Y^u_t,X,u_t)$ --- as an expected value (under $\P$) of a functional depending on $(Y^u,\Upsilon^u,\langle\Upsilon^u\rangle)$; combining \eqref{eq:upsilon_new} with \eqref{eq:y_sam_new}, they then observe that the dynamics of $(Y^u,\Upsilon^u,\langle\Upsilon^u\rangle)$ can be described by a system of SDEs driven by a $\P$-Brownian motion.
    This leaves them with a \emph{finite-dimensional} separated problem, the study of which forms the basis for their approach.
    As mentioned in \cite[Remark~2.7]{cohen_2025}, a similar approach may be employed when restricting to controls which are polynomials of some fixed degree; we crucially go beyond such an assumption.
\end{remark}

\begin{remark}
    One might ask whether our results could be extended to include dependence on the observation process in the cost and observation functions.
    Including it in the cost $k$ means that the value function depends also on the factor $Y^u$; hence, one would need to verify that the viscosity and comparison theory we are citing from \cite{cox_2024} carries over to this case.
    %Indeed, a crucial component underlying our analysis is the It{\^o}-formula for functions of MVMs, say $f(\xi_t)$, developed in \cite{cox_2024}. Now, given a function $f:\R\times\Pc\to\R$, this formula gives the dynamics of the random field $f(y,\xi_t)$, $y\in\R$, and a straight-forward application of Ito-Wentzell's formula then yields the dynamics also of $f(Y_t,\xi_t)$.
    To include it in the observation function $h$, one would in addition need to establish existence of solutions to the pair of equations \eqref{eq:y_innovation}--\eqref{eq:pi} when $h$ also depends on the $Y$-component of that solution (with $u$ being constant and $I^u$ some fixed Brownian motion; \cf{} \eqref{eq:dynamics_controlled}) as well as study stability properties of that system of equations (\cf{} Theorem~\ref{thm:cont_MVM_poly}).
    We leave it for future research to investigate those interesting questions.
    %Here we have also focused on the infinite horizon case in order to avoid any explicit time-dependence in the value function; to cover this case one would in a similar way need to verify that the results in \cite{cox_2024} carries over. 
    %A related question of considerable interest is to investigate the effect of controlling also the diffusion term.
\end{remark}

\section{Weak and approximate formulation and main result}\label{section_5}

In this section we provide alternative problem formulations and give our main result, which on the one hand states that all those formulations coincide, and on the other hand characterises an arbitrarily-close-to-optimal strategy for our main problem of interest.

\subsection{Weak problem formulation}

We first introduce our weak formulation where there notably are no restrictions on the information used for the strategies:

\begin{definition}[Weak controls]
A tuple $(\Omega,\Fc,\Fb,\P,W,\xi,u)$ is a \emph{weak admissible control} if it satisfies:
\begin{enumerate}
    \item $(\Omega,\Fc,\Fb,\P)$ is a filtered probability space satisfying the usual conditions;
    \item $W$ is an $(\Fb,\P)$-Brownian motion;
    \item $u$ is an $\Uc$-valued $\Fb$-progressively measurable process;
    \item $\xi$ is a continuous $\Pc$-valued MVM such that 
    \begin{align}\label{eq:sde}
    \begin{split}
    %\di Y_s &=\xi_s(h(u_s,\cdot))\ds+\di W_s,\\
    \di\xi_s(\varphi)
     &=\big(\xi_s\big(\varphi h(u_s,\cdot)\big)-\xi_s(\varphi)\xi_s\big(h(u_s,\cdot)\big)\big)\di W_s,
     \quad\varphi\in C_b.
    \end{split}
    \end{align}
\end{enumerate}
\end{definition}

    To avoid overly cumbersome notation, we often call $(\xi,u)$ an admissible control without explicitly mentioning the other objects of the tuple.
    For $\mu\in\Pc$, we denote by $\mathcal{A}^{weak}(\mu)$ the set of all weak admissible controls $(\xi,u)$ which satisfy $\xi_0=\mu$; we note that this set is non-empty. 
    In turn, we define
    \begin{align}\label{eq:objective_weak}
    V^{weak}(\mu)=\inf_{\mathcal{A}^{weak}(\mu)}\E\int_0^\infty e^{-\beta t}k(\xi_t,u_t)\dt.
    \end{align}

    Our main result will establish that the value of this weak formulation agrees with the value of our original problem of interest.

    \subsection{Approximate problem formulation}\label{sec:piecewise_constant}

    We here introduce an approximation of the original problem of interest by restricting to piecewise constant controls.
    
    To this end, here, let $\Omega=C([0,\infty),\R)$, denote by $W$ the canonical process, and let $\P$ be the Wiener measure under which $W$ is a Brownian motion. 
    Let $\Fb^W$ denote the raw filtration generated by the canonical process and augmented by the null sets (but not completed).
    For $n\in\mathbb{N}$, define the dyadic step size $\delta_n=2^{-n}$ and consider the associated discrete time grid $\Tb^n=\{k\delta_n:k\in\mathbb{N}\}$.
    For $n\in\Nb$, let 
    \begin{align*}
        \mathcal{A}^n
        =\Big\{u:[0,\infty)\times\Omega\to\Uc :u_t=\sum_{r\in\Tb^n}u_r\boldsymbol{1}_{[r,r+\delta_n)}(t),\;\; t\in[0,\infty), 
        \; \textrm{$u_r$ is $\Fc^W_r$-meas.}\Big\}.
    \end{align*}
    For $\mu\in\Pc$ and $u\in\Ac^n$, $n\in\Nb$, let $\xi^{u;\mu}$ be the strong solution of
    \begin{align}\label{eq:dynamics_controlled}
    \di\xi_t(\varphi)
    &=\big(\xi_t\big(\varphi h(u_t,\cdot)\big)-\xi_t(\varphi)\xi_t\big(h(u_t,\cdot)\big)\big)\di W_t,
    \;\varphi\in C_b,
    \;\;\xi_0=\mu;
    \end{align}
    since $\mu$ has compact support and $h(v,\cdot)$ is continuous for $v\in\Uc$ (see Lemma~\ref{lem:U} below), existence and pathwise uniqueness of solutions to \eqref{eq:dynamics_controlled} is guaranteed by \cite[Theorem~2.1]{kunita_1971} (\cf{} also \cite[Theorems~V.5 and V.6]{szpirglas_1978}).
    For $\mu\in\Pc$ and $u\in\Ac^n$, $n\in\Nb$, we define
    \begin{align*}
    \hat J(u;\mu)
    :=\E
    \int_0^\infty e^{-\beta t}k\left(\xi^{u;\mu}_t,u_t\right)\dt
    =\E
    \sum_{r\in\Tb^n}\int_r^{r+\delta_n} e^{-\beta t}k\left(\xi^{u;\mu}_t,u_r\right)\dt.
    \end{align*}
    For $n\in\Nb$ and $\mu\in\Pc$, the approximate problem formulation is then given by
    \begin{align*}
        V^n(\mu)=\inf_{u\in\mathcal{A}^n}\hat J(u;\mu).
    \end{align*}

    Our main result will establish that these approximate problems converge, as the partition grows finer, to the value of our original problem of study.
    In order to formalise this, we define the limiting function, for $\mu\in\Pc$, by
    \begin{align*}
        V^+(\mu)
        =
        \lim_{n\to\infty}
        V^n(\mu);
    \end{align*}
    it is well defined since $V^n$ is bounded and monotone in $n$.

\subsection{Associated HJB equation}

    In order to formulate the HJB equation, we define, for $\mu\in\Pc$, $r\in\R$ and $\varphi\in C(\R^2)$,
    \begin{align*}
        H(\mu,r,\varphi)=\beta r+\sup_{v\in\Uc }\left\{-k(\mu,v)-\frac{1}{2}\int_{\R\times\R}\varphi(x,z)
        \sigma(v,\mu;\dx)\sigma(v,\mu;\dz)
        \right\},
    \end{align*}
    where, for $\mu\in\Pc$ and $v\in\Uc$,  
    \begin{align}\label{eq:def_sigma}
    \sigma(v,\mu;\dx)=(h(v,x)-\mu(h(v,\cdot)))\mu(\dx).
    \end{align}
    The HJB equation associated with our problem is then given by (\cf{} the dynamics in \eqref{eq:sde} and objective in \eqref{eq:objective_weak}):  
    \begin{align}\label{eq:hjb}
    H\left(\mu,u(\mu),\piann{u}{\mu}(\cdot,\cdot;\mu)\right)=0,
    \quad \mu\in\Pc;
    \end{align}
    for functions $u:\Pc\to\R$ we here make use of the notion of derivative as it was defined in \cite[Section~4]{cox_2024}, which is essentially the same notion as the one referred to as the \emph{linear functional derivative} in \cite[Section~5.4]{carmona_delarue_I}. 

    \begin{remark}\label{rem:boundary}
When $\mu = \delta_x \in \mathcal{P}^s$, \eqref{eq:hjb} simplifies to
\[
u(\delta_x)=\frac{1}{\beta}
\inf_{v\in\Uc}k(\delta_x,v).
\]
This can be interpreted as a kind of boundary condition: %(see also \cite[Remark~6.1]{cox_2024}): 
Since an MVM starting at a Dirac measure $\delta_x$ stays there for all times, by definition, 
$$
V^{weak}(\delta_x)=V^+(\delta_x)=\frac{1}{\beta}\inf_{v\in\Uc}k(\delta_x,v).
$$ 
\end{remark}

\subsection{Our main result}

    We are now ready to provide our main result; its proof is reported in Section~\ref{sec:proof_main}.
    When talking about solutions of equation \eqref{eq:hjb} we make use of the notion of viscosity solution as it was introduced in \cite{cox_2024}; for the reader's convenience, this definition is recalled in Definition~\ref{def:viscosity} below.
    We also recall the standing Assumption~\ref{ass:U}.
    
    \begin{theorem}\label{thm:main}
    %Let Assumption~\ref{ass:U} hold. 
    It holds that 
    \begin{align*} 
        V^{weak}(\mu_0)
        =\inf_{u\in\Ac}J(u)
        =V^+(\mu_0).
    \end{align*}
    
    Moreover, on $\Pc$,
    $$
    V^{weak}=V^+,
    $$
    and this function is the unique continuous viscosity solution of \eqref{eq:hjb}.
    %among functions $u:\Pc\to\R$ such that, for any $x_1,\dots,x_N\in\supp(\mu_0)$, the function $\tilde u$ given by \eqref{eq:reduction} is continuous on $\Delta^{N-1}$, and such that, for any $\mu\in\Pc$, there exists a sequence of finitely supported $\mu_n\in\Pc$ with $u(\mu)=\lim_{n\to\infty}u(\mu_n)$. 

    Finally, for any $\varepsilon>0$, we can find $n\in\Nb$ and a function $\hat u^n:\Pc\to\Uc$, such that $u^*\in\mathcal{A}$ given in feedback form by
    \begin{align}\label{eq:closing_constant}
        u^{*}_t=\sum_{r\in\Tb^n}\hat u^n\big(\pi^{u^*}_r\big)\boldsymbol{1}_{[r,r+\delta_n)}(t),
        \quad t\in[0,\infty),
    \end{align}
    satisfies 
    $$
    J(u^*)\le \inf_{u\in\Ac}J(u)+\varepsilon.
    $$
    \end{theorem}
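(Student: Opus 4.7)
The plan is to sandwich the three quantities by combining two constructive inequalities with a viscosity comparison argument. The easy direction is the chain $V^{weak}(\mu_0)\le \inf_{u\in\Ac}J(u)\le V^+(\mu_0)$: any $u\in\Ac$ gives rise to the weak admissible tuple $(\Omega,\Fc,\mathcal{Y}^u,\P,I^u,\pi^u,u)$ via the innovations representation \eqref{eq:pi}, yielding the first inequality; for the second, given $u\in\Ac^n$ on the canonical Wiener space with $\hat J(u;\mu_0)$ arbitrarily close to $V^n(\mu_0)$, the $\Fc_r^W$-measurable $u_r$ can be reinterpreted, using pathwise uniqueness of \eqref{eq:dynamics_controlled}, as a Borel function of the filter trajectory up to $r$, and closing the loop as in Section~\ref{sec:mvm_sde} produces a $u^{*}\in\Ac$ with matching cost $J(u^{*})=\hat J(u;\mu_0)$.

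The crux is to show $V^+\le V^{weak}$ on all of $\Pc$ via viscosity methods. I would first establish that $V^{weak}$ is a viscosity supersolution of \eqref{eq:hjb}: with the DPP for the weak problem in hand (obtained by concatenation of weak admissible tuples, where the stability results of Section~4 ensure measurable selection of near-optimal continuations), fixing a test function $\varphi$ such that $V^{weak}-\varphi$ has a local minimum at $\mu$, choosing a near-optimal weak control, and applying the It\^o formula for MVMs from \cite{cox_2024} to $\varphi$ gives $H(\mu,\varphi(\mu),\varphi''(\cdot,\cdot;\mu))\ge 0$. Dually, I would show that the upper semicontinuous envelope of $V^+$ is a subsolution. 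Each $V^n$ satisfies the one-step DPP
\[
V^n(\mu)=\inf_{v\in\Uc}\E\Big[\int_0^{\delta_n}e^{-\beta t}k(\xi^{v;\mu}_t,v)\,dt+e^{-\beta\delta_n}V^n(\xi^{v;\mu}_{\delta_n})\Big],
\]
so freezing an arbitrary constant $v\in\Uc$ on $[0,\delta_n]$ and testing against a $\varphi$ for which $V^n-\varphi$ attains a local maximum at $\mu$ yields, through the It\^o formula for MVMs and a Barles--Souganidis type argument as $n\to\infty$, the inequality $H(\mu,\varphi(\mu),\varphi''(\cdot,\cdot;\mu))\le 0$; the stability of $\xi^{v;\mu}$ in $\mu$ from Section~4 is what controls the passage to the monotone limit. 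The comparison principle of \cite{cox_2024} then forces $V^+\le V^{weak}$, while the reverse inequality $V^{weak}\le V^n$ is immediate, since any $u\in\Ac^n$ provides a weak admissible tuple. Hence $V^{weak}=V^+$ on $\Pc$, both are continuous, and this common function is the unique continuous viscosity solution of \eqref{eq:hjb}; evaluating at $\mu_0$ closes the first part of the theorem.

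For the $\varepsilon$-optimal strategy, the monotone convergence $V^n(\mu_0)\downarrow V^+(\mu_0)$ lets us pick $n$ with $V^n(\mu_0)\le V^+(\mu_0)+\varepsilon/2$, and then select $u\in\Ac^n$ with $\hat J(u;\mu_0)\le V^n(\mu_0)+\varepsilon/2$. A measurable selection step expresses $u_r$ as $\hat u^n(\xi^{u;\mu_0}_r)$ for a Borel function $\hat u^n:\Pc\to\Uc$, using pathwise uniqueness of \eqref{eq:dynamics_controlled}. Closing the loop as in the first paragraph produces $u^{*}\in\Ac$ of the form \eqref{eq:closing_constant} with $J(u^{*})=\hat J(u;\mu_0)\le \inf_{u\in\Ac}J(u)+\varepsilon$. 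The main obstacle throughout is the viscosity step of the second paragraph: running Barles--Souganidis in the infinite-dimensional, measure-valued setting where test functions are of Lions type on $\Pc$, handling the non-compactness of $\Uc$ in the supremum defining $H$, and establishing the DPP for the weak formulation in a way compatible with the comparison framework of \cite{cox_2024}; these steps rest on the stability results of Section~4 and on a careful identification of the appropriate semicontinuous envelopes of $V^+$ and $V^{weak}$.
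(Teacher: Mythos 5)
Your overall architecture is the same as the paper's: sandwich the three quantities via $V^{weak}(\mu_0)\le\inf_{u\in\Ac}J(u)\le V^+(\mu_0)$ and $V^{weak}\le V^+$ on $\Pc$, then close the gap $V^+\le V^{weak}$ through viscosity comparison ($V^+$ a subsolution, $V^{weak}$ a supersolution). Two small differences worth noting: the paper does not re-derive the supersolution property of $V^{weak}$ from scratch via a weak DPP, but imports it directly from \cite[Theorem~6.2]{cox_2024}; and there is no need to pass to the upper semicontinuous envelope of $V^+$, since $V^+$ is already upper semicontinuous as the decreasing limit of the continuous $V^n$ (Proposition~\ref{prop:properties_infinite}), and indeed continuous by the uniform-in-$u$ continuity of $\hat J(u;\cdot)$. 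Also, $\Uc$ as defined is in fact compact in the product topology (it sits inside the Tychonoff-compact product of the bounded intervals $[-\sqrt{K}/R^i,\sqrt{K}/R^i]$), so the concern about non-compactness of the action space in the supremum defining $H$ does not arise.

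The genuine gap is in the construction of the $\varepsilon$-optimal feedback control. You propose to take an arbitrary $\varepsilon$-optimal $u\in\Ac^n$ and ``reinterpret $u_r$ as a Borel function of the filter trajectory up to $r$'' (and later, even more restrictively, of the single value $\xi^{u;\mu_0}_r$), invoking pathwise uniqueness of \eqref{eq:dynamics_controlled}. Pathwise uniqueness only gives the forward direction — it makes $\xi^{u;\mu_0}$ a measurable function of $W$ and $u$ — not the inverse. The map from $W$ to the filter trajectory is not injective in general: the diffusion coefficient in \eqref{eq:dynamics_controlled} can degenerate (for instance when $h(u_s,\cdot)$ is $\xi$-a.s.\ constant, e.g.\ along a zero control, or once the filter has collapsed toward a Dirac), so $W$ and hence $u_r$ cannot be recovered from the filter path, let alone from the single value $\xi^{u;\mu_0}_r$ required by the Markov form \eqref{eq:closing_constant}. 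The paper avoids this entirely: it does not attempt to re-express an arbitrary optimal control, but uses the DPP (Theorem~\ref{thm:dpp} together with Corollary~\ref{cor:optimiser_existence}) to directly \emph{construct} a piecewise-constant Markov feedback law $\hat u^n:\Pc\to\Uc$ whose induced control $u^{*,n}\in\Ac^n$ is $\varepsilon$-optimal, and only then closes the loop via pathwise uniqueness and uniqueness in law of \eqref{eq:pi}. To repair your argument you would need to replace the ``measurable selection from an arbitrary near-optimal control'' step with a DPP-based construction of a feedback $\varepsilon$-optimiser, as in the cited corollary.
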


    \begin{remark}
        The functions $V^+$ and $V^{weak}$ effectively correspond to a piecewise constant approximation and a weak formulation of the \emph{separated problem} associated with our main problem of study. Thanks to the above result, it is however possible to draw conclusions also about alternative formulations of the original problem itself: 
        (i) Restricting in our main problem of study to controls which are piecewise constant (over arbitrary partitions) would not alter the value of the problem. 
        (ii) Since the specifics of the underlying probability space do not have an impact on the value of the problem, considering a weak formulation where one also optimises over the underlying probability tuples would still yield the same value. 
        (iii) In the literature on stochastic control under partial information, it is common to consider so-called wide-sense controls which are adapted to a filtration which is larger than the observation filtration but still small enough for the filter to satisfy the associated filtering equation; allowing for controls of this type (adopting the definitions to the present context) would also not alter the value of the problem.
    \end{remark}

%\cred{
%    \begin{remark}[POSSIBLE EXTENSION]\label{rem:extension}
%    Let $\overline\Uc$ be the closure of $\Uc$ (w.r.t. the uniform norm, identifying $v$ with $h(v,\cdot)$). Then, imposing the assumption that $v\mapsto k(\mu,v)$ is continuous, $H(\mu,r,\varphi)$ will be the same no matter whether the supremum is taken over $\Uc$ or $\overline\Uc$; that is, the HJBs corresponding to $\Uc$ and $\overline\Uc$ are the same. In consequence, as long as one could manage to verify that Assumption (ii) of Theorem~\ref{thm:comparison} (\cf{} Lemma~\ref{lem:cont_V_weak} (ii)) is satisfied for $v=V^{weak}$ defined w.r.t. $\overline\Uc$, our entire result would a-posteriori hold also for $\overline\Uc$-valued controls! Indeed, this is the only bit that needs to be verified (it's fine to keep working with $\Uc$-valued processes for the auxiliary problem). To obtain this, one needs to prove the following: for any initial condition $\mu$ and control $u$, there exists a sequence of finitely supported $\mu_n$ and admissible controls $u_n$ such that the objective evaluated at $(\mu_n,u_n)$ converges to the objective evaluated at $(\mu,u)$ (our previous `approximation conjecture' stating that $\xi^{T^*_n\mu,u}_t$ and $\xi^{\mu,u}_t$ are `asymptotically equal in expectation' should be seen in light of this). %Conjecture~\ref{thm:cont_MVM_approx_m}
%    \end{remark}
%}

    \section{Stability properties of controlled MVMs}

    In this section we prove stability properties of our controlled MVMs with respect to the initial condition; these properties will become crucial in the upcoming analysis. Before proceeding to the key results, we verify some basic properties of our action space:
   
    \begin{lemma}\label{lem:U}
        The set $\{h(v,x):v\in\Uc, x\in\supp(\mu_0)\}$ is a bounded subset of $\R$ and the function $h(v,\cdot)$ is continuous on $\supp(\mu_0)$ uniformly in $v\in\Uc$.
    \end{lemma}
    
    \begin{proof}
        Let $0<\kappa<R$ such that $\supp(\mu_0)\subseteq[-\kappa,\kappa]$.
        Further, let $v\in\Uc$ and $x\in\supp(\mu_0)$. Then, for all $n,m\in\Nb$,  
        \begin{align}\label{eq:cauchy_conv}
            \sum_{i=n}^m|v_ix^i|
            \le 
            \sqrt{K}\sqrt{\sum_{i=n}^m\left(\frac{\kappa}{R}\right)^{2i}}.
        \end{align}
        Since $\sum_{i=0}^\infty\left(\kappa/R\right)^{2i}$ converges, we obtain that $\sum_{i=0}^\infty v_ix^i$ is absolutely convergent.
        A similar calculation yields the boundedness claim.
        Next, relying once again on \eqref{eq:cauchy_conv}, we have that for every $\varepsilon>0$, there exists some $n\in\Nb$, such that for all $v\in\Uc$ and $x,y\in\supp(\mu_0)$,
        \begin{align*}
            \left|h(v,y)-h(v,x)\right|
            %\le \left|\sum_{i=1}^n v_ix^i-\sum_{i=1}^n v_iy^i\right|+\varepsilon
            \le \sum_{i=1}^n|v_i||x^i-y^i|+\varepsilon
            \le \sqrt{K}\sqrt{\sum_{i=1}^n\left(\frac{x^i-y^i}{R^i}\right)^2}+\varepsilon.
        \end{align*}
        For each $i=1,\dots,n$, there exists some $\delta_i>0$, such that if $|x-y|\le \delta_i$, then $\frac{|x^i-y^i|}{R^i}\le \frac{\varepsilon}{\sqrt{Kn}}$. Provided that $|x-y|\le \min_{i=1,\dots,n}\delta_i$, it then holds that $\left|h(v,y)-h(v,x)\right|\le 2\varepsilon$, for all $v\in\Uc$, which completes the proof.
    \end{proof}

Our first stability result concerns finitely supported MVMs; its proof relies on stability properties for classical SDEs.

\begin{proposition}\label{thm:cont_MVM_finite}
    Let $N\in\Nb$ and consider some fixed $N$ points $\{x_1,\dots,x_N\}\subset\supp(\mu_0)$; let $\Pc_N=\{\mu\in\Pc:\supp(\mu)\subseteq\{x_1,\dots,x_N\}\}$. 
    \begin{enumerate}
        \item On any filtered probability space supporting a Brownian motion $W$ and a progressively measurable $\Uc$-valued process $u$, equation \eqref{eq:sde} equipped with an initial condition $\mu\in\Pc_N$ admits a pathwise unique solution; it is $\Pc_N$-valued \as{}
        \item For any $t>0$, there exists a constant $c$ depending on $N$ and $t$, such that for any filtered probability space supporting a Brownian motion $W$ and a progressively measurable $\Uc$-valued processes $u$, and for any initial conditions $\mu,\nu\in\Pc_N$,
    \begin{align*}
        \frac{\E\|((\xi^{u;\mu}_t-\xi^{u;\nu}_t)(\{x_1\}),\dots,(\xi^{u;\mu}_t-\xi^{u;\nu}_t)(\{x_N\}))\|}{\left\|((\mu-\nu)(\{x_1\}),\dots,(\mu-\nu)(\{x_N\}))\right\|}
        \le 
        c,
    \end{align*}
    where $\xi^{u;\mu}$ and $\xi^{u;\nu}$ denote the respective solutions to equation \eqref{eq:sde} and $\|\cdot\|$ is the Euclidean norm. 
    \end{enumerate}
\end{proposition}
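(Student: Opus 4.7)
The plan is to reduce the measure-valued SDE \eqref{eq:sde} with initial data in $\Pc_N$ to a classical $\R^N$-valued SDE for the atom weights, and then to perform a standard It\^o/Gr\"onwall analysis on that system. For $\mu\in\Pc_N$ with $\mu_i:=\mu(\{x_i\})$, I consider the finite-dimensional system
\begin{align*}
    dp_i(t) = p_i(t)\Big(h(u_t, x_i) - \sum_{j=1}^N p_j(t)\, h(u_t, x_j)\Big) dW_t, \quad p_i(0) = \mu_i, \quad i = 1, \ldots, N.
\end{align*}
By Lemma~\ref{lem:U}, $h$ is uniformly bounded on $\Uc\times\supp(\mu_0)$; after a smooth truncation of the coefficients outside a bounded neighbourhood of $[0,1]^N$, they become globally Lipschitz in $p$, so classical results give a pathwise unique strong solution. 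Each $p_i$ is then a Dol\'eans exponential $p_i(t)=\mu_i\,\mathcal{E}\big(\int_0^\cdot\sigma_i\,dW\big)_t$ with bounded $\sigma_i(s):=h(u_s,x_i)-\sum_j p_j(s) h(u_s,x_j)$, hence $p_i(t)\ge 0$ a.s.; moreover, $S_t:=\sum_i p_i(t)$ satisfies the scalar SDE $dS_t=(1-S_t)\big(\sum_j p_j(t) h(u_t,x_j)\big)dW_t$ with $S_0=1$, and pathwise uniqueness forces $S_t\equiv 1$. Hence the simplex is invariant, the truncation is inactive, and a direct It\^o calculation shows that $\xi_t:=\sum_i p_i(t)\delta_{x_i}$ is a $\Pc_N$-valued solution of \eqref{eq:sde}.

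For uniqueness of the measure-valued equation, let $\tilde\xi$ be any solution with $\tilde\xi_0=\mu$. For $\varphi\in C_b$ non-negative with $\varphi\equiv 0$ on $\{x_1,\dots,x_N\}$, the process $\tilde\xi_\cdot(\varphi)$ is a non-negative bounded martingale starting from $0$, so $\tilde\xi_t(\varphi)=0$ a.s.~for each $t$. Taking a countable sequence of such $\varphi$ increasing pointwise to $\boldsymbol{1}_{\{x_1,\dots,x_N\}^c}$ and using weak continuity of $t\mapsto\tilde\xi_t$, I deduce that $\tilde\xi_t$ is supported in $\{x_1,\dots,x_N\}$ a.s.~for all $t\ge 0$. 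Approximating the indicators of $\{x_i\}$ by functions in $C_b$ and passing to the limit in \eqref{eq:sde}, the weights $\tilde p_i(t):=\tilde\xi_t(\{x_i\})$ satisfy the same $\R^N$-valued SDE, and pathwise uniqueness in finite dimensions gives $\tilde\xi=\xi$, completing assertion~(i).

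For assertion~(ii), set $A_i(t,p):=p_i\big(h(u_t,x_i)-\sum_j p_j h(u_t,x_j)\big)$ and $e_i(t):=p^\mu_i(t)-p^\nu_i(t)$. The functions $A_i$ are polynomial in $p$ with partial derivatives uniformly bounded on $[0,1]^N$ by a constant depending only on $N$ and the sup-norm of $h$ (hence on $N$, $K$, and $R$), so there exists $L>0$ with $\sum_i|A_i(t,p)-A_i(t,q)|^2 \le L^2\|p-q\|^2$ on $[0,1]^N$. Applying It\^o to $e_i^2$, summing over $i$, and using the It\^o isometry,
\begin{align*}
    \E\|e(t)\|^2 \le \|e(0)\|^2 + L^2 \int_0^t \E\|e(s)\|^2\,ds;
\end{align*}
Gr\"onwall then gives $\E\|e(t)\|^2\le\|e(0)\|^2 e^{L^2 t}$ and Jensen yields $\E\|e(t)\|\le\|e(0)\|e^{L^2 t/2}$, the required bound with $c=e^{L^2 t/2}$, uniform in the underlying probability space and in $u$.

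The main obstacle is the rigorous justification, in assertion~(i), that \emph{every} solution of \eqref{eq:sde} starting from $\mu\in\Pc_N$ remains supported on $\{x_1,\dots,x_N\}$ --- this is precisely what legitimises reducing the infinite-dimensional equation to the finite-dimensional one. It requires both the MVM martingale property applied to non-negative test functions vanishing on the atomic support (yielding pointwise-in-time vanishing) and the weak continuity of paths (to obtain an a.s.~statement simultaneous in $t$). Once this support statement is available, the remainder is a routine $\R^N$-analysis echoing the It\^o--Gr\"onwall technique of \cite[Theorem~2.1]{kunita_1971}.
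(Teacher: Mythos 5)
Your proof and the paper's share the same core strategy: reduce the measure-valued SDE, when started from $\mu\in\mathcal{P}_N$, to an $\mathbb{R}^N$-valued SDE for the atom weights on $\Delta^{N-1}$, establish Lipschitz continuity of the coefficients uniformly in $v\in\mathcal{U}$ (via Lemma~\ref{lem:U}), and then appeal to classical finite-dimensional SDE theory. The difference is in how you handle the auxiliary facts the paper outsources to citations: you prove support preservation directly by applying the martingale property of $\tilde\xi_\cdot(\varphi)$ to non-negative $\varphi\in C_b$ vanishing on $\{x_1,\dots,x_N\}$ together with path continuity, where the paper invokes \cite[Remark~2.3.(ii)]{cox_2024}; you derive the $L^1$ stability estimate by a direct It\^o--Gr\"onwall--Jensen calculation, where the paper cites \cite[Corollary~2.5.5]{krylov_2009}; and you add a truncation-plus-Dol\'eans-exponential-plus-mass-conservation argument to show the simplex is invariant, rather than relying (as the paper implicitly does) on the definition of a solution of \eqref{eq:sde} being $\mathcal{P}$-valued. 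Your version is more self-contained and transparent; the paper's is terser. One small point worth tightening: the Dol\'eans-exponential representation of $p_i$ and the mass-conservation SDE for $S_t$ hold only where the truncation is inactive, so the invariance argument should formally be run up to the first exit time from the truncation region and then conclude that this exit time is a.s.\ infinite.
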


\begin{proof}
    $(i)$. Suppose $\xi^{u; \mu}$ is a solution to (\ref{eq:sde}) with respect to an initial condition $\mu \in \mathcal{P}_N$. Then $\xi^{u; \mu}_t$ remains supported on $\{x_1, \ldots, x_N\}$ a.s. for all $t \ge 0$, and $\xi^{u; \mu}(\{x_i\})$, $i=1, \ldots, N$,  are non-negative martingales which satisfy
    \begin{align*}
        \mathrm{d} \xi^{u; \mu}_t(\{x_i\}) 
        &=
        \xi^{u; \mu}_t(\{x_i\}) \left( h(u_t, x_i) - \sum_{n=1}^N \xi^{u; \mu}_t(\{x_n\}) h(u_t, x_n) \right) \mathrm{d} W_t \label{eq: finite support MVM for xi^u,i}. 
    \end{align*}
    Let $\Delta^{N-1}$ be the $N$-dimensional standard simplex and define the function $\tilde{\sigma}: \mathcal{U} \times \Delta^{N-1} \to \mathbb{R}^N$ by
    \[
        \tilde{\sigma}(v, \theta)
        = 
        \left(
            \theta_1 \left( h(v, x_1) - \sum_{n=1}^N \theta_n h(v, x_n) \right),
            \ldots, 
            \theta_N \left( h(v, x_N) - \sum_{n=1}^N \theta_n h(v, x_n) \right)
        \right),    
    \] 
    If we denote $\tilde{\xi}^{u; \mu}_t = \left(\xi^{u; \mu}_t(\{x_i\}), \ldots, \xi^{u; \mu}_t(\{x_N\})\right)$, then $\tilde{\xi}^{u; \mu}$ is a $\Delta^{N-1}$-valued process which  satisfies the $N$-dimensional SDE
    \begin{equation}\label{eq: finite support MVM}
        \mathrm{d} \tilde{\xi}^{u; \mu}_t 
        = 
        \tilde{\sigma}(u_t, \tilde{\xi}^{u; \mu}_t) 
        \mathrm{d} W_t.
    \end{equation}
    Conversely, suppose $\tilde{\xi}^{u; \mu}$ is a solution to (\ref{eq: finite support MVM}) with initial condition $\tilde{\xi}^{u; \mu}_0 \in \Delta^{N-1}$. We define the measure-valued process $\hat{\xi}^{u; \mu}$ by $\hat{\xi}^{u; \mu}_t(f) \coloneqq \sum_{i=1}^N (\tilde{\xi}^{u; \mu}_t)_i f(x_i)$, $f \in C_b$, $t \ge 0$. Then, 
    \begin{align*}
        \mathrm{d} \hat{\xi}^{u; \mu}_t(f) 
        & = 
        \left( \sum_{i=1}^N (\tilde{\xi}^{u; \mu}_t)_i f(x_i) h(u_t, x_i) - \sum_{i=1}^N (\tilde{\xi}^{u; \mu}_t)_i f(x_i) \sum_{n=1}^N (\tilde{\xi}^{u; \mu}_t)_n h(u_t, x_n) \right) \mathrm{d}W_t
        \\
        & = 
        \left( \hat{\xi}^{u; \mu}_t \left( f h (u_t, \cdot) \right) - \hat{\xi}^{u; \mu}_t \left( f \right) \hat{\xi}^{u; \mu}_t \left( h (u_t, \cdot) \right) \right) \mathrm{d}W_t,
    \end{align*}
    which means that the process $\hat{\xi}^{u; \mu}$ is a solution to (\ref{eq:sde}). Since $\tilde{\xi}^{u; \mu}_0 \in \Delta^{N-1}$, $\hat{\xi}^{u; \mu}_0 \in \mathcal{P}_N$; since every measure-valued process satisfying (\ref{eq:sde}) has preserved mass, $\hat{\xi}^{u; \mu}$ is almost surely $\mathcal{P}_N$-valued. Therefore, the finite-dimensional SDE (\ref{eq: finite support MVM}) provides an equivalent representation of (\ref{eq:sde}) when the initial condition has finite support. It is thus sufficient to show that $\tilde{\sigma}(v, \theta)$ is Lipschitz continuous in $\theta$ uniformly for all $v \in \mathcal{U}$. 
    
    Let $\theta$ and $\eta$ be two distinct points in $\Delta^{N-1}$ and $\|h\| = \sup \{h(v, x): v \in \mathcal{U}, x \in \mathrm{supp}(\mu_0)\}$, which is well-defined by Lemma \ref{lem:U}. For each $k = 1, \ldots, N$, we have  
    \begin{align*}
        \left| \tilde{\sigma}_k(v, \theta) - \tilde{\sigma}_k(v, \eta) \right|
        & = 
        \left| (\theta_k - \eta_k) h(v, x_k) + \eta_k \sum_{n=1}^N \eta_n h(v, x_n) - \theta_k \sum_{n=1}^N \theta_n h(v, x_n) \right| 
        \\
        & \leq 
        | \theta_k - \eta_k| \|h\| + \sum_{n=1}^N |\eta_k \eta_n - \theta_k \eta_n + \theta_k \eta_n - \theta_k \theta_n | \|h\|
        \\
        & \leq 
        2 |\theta_k - \eta_k| \|h\| + \theta_k \sum_{n=1}^N  |\theta_n - \eta_n | \|h\|.
    \end{align*}
    Therefore, applying the triangle inequality and the inequality $(a_1 + \cdots + a_N)^2 \leq N (a_1^2 + \cdots + a_N^2)$, for all $a_i \in \mathbb{R}$, we have 
    \begin{align*}
        \left\| \tilde{\sigma}(v, \theta) - \tilde{\sigma}(v, \eta) \right\|
        \leq   
        2 \|h\| \|\theta - \eta\| + \|h\| \sqrt{N} \|\theta - \eta\|
        \leq 
        C \|\theta - \eta\|,
    \end{align*}
    where $C$ depends on $N$ and $\|h\|$. 
    
    \hfill 
 
    \noindent 
    $(ii)$. Denote $\tilde{\mu} = \left( \mu(\{x_1\}), \ldots, \mu(\{x_N\}) \right)$ and $\tilde{\nu} = \left( \nu(\{x_1\}), \ldots, \nu(\{x_N\}) \right)$. Let $\tilde{\xi}^{u; \mu}$ and $\tilde{\xi}^{u; \nu}$ be solutions to (\ref{eq: finite support MVM}) with initial conditions $\tilde{\mu}$ and $\tilde{\nu}$ respectively. Since the function $\tilde{\sigma}(v, \cdot)$ is uniformly Lipschitz for $v \in \mathcal{U}$, by \cite[Corollary 2.5.5]{krylov_2009}, 
    \begin{align*} 
        \left( 
            \mathbb{E} 
            \left[ 
                \left\| \tilde{\xi}^{u; \mu}_t - \tilde{\xi}^{u; \nu}_t \right\| 
            \right]
        \right)^2
        & \leq 
        4 \|\tilde{\mu} - \tilde{\nu}\|^2
        + 
        L(C) \| \tilde{\mu} - \tilde{\nu} \|^2 \int_0^t e^{(4C^2+1) (t-s)} \mathrm{d}s,  
    \end{align*}
    where $L(C)$ is a constant dependent on the uniform Lipschitz constant $C$ of $\tilde{\sigma}$. Hence,
    \[
        \mathbb{E} 
            \left[ 
                \left\| \tilde{\xi}^{u; \mu}_t - \tilde{\xi}^{u; \nu}_t \right\| 
            \right]
        \leq 
        c \|\tilde{\mu} - \tilde{\nu}\|, 
    \]
    where $c$ depends on $N$, $t$ and $\|h\|$. 
\end{proof}

Our second stability result concerns MVMs with general initial conditions. Its proof makes use of ideas similar to those employed in the proof of \cite[Theorem~2.1]{kunita_1971} %\cf{} \cite[Theorem~2.1.(ii)]{kunita_1971}
(\cf{} also the proof of \cite[Theorem~3]{fleming_1980}). The convergence is notably uniform in the control; this is needed for the upcoming proofs of Proposition~\ref{prop:properties_infinite} and the continuity of $V^+$ (\cf{} \eqref{eq:cont_V_+}).

\begin{theorem}\label{thm:cont_MVM_poly}
    Let $\mu\in\Pc$ and let $\mu_k$ be a sequence in $\Pc$ such that $\Wc(\mu_k,\mu)\to 0$ as $k\to\infty$. 
    Consider a filtered probability space supporting a Brownian motion $W$ and a sequence of progressively measurable $\Uc$-valued processes $u_k$, $k\in\Nb$. Suppose that $\xi^{u_k;\mu_k}$ and $\xi^{u_k;\mu}$, $k\in\Nb$, are solutions to equation \eqref{eq:sde} for $u=u_k$ and the initial conditions $\mu_k$ and $\mu$, respectively.
    Then, the solutions are unique (up to indistinguishability) and for any $t>0$,
    \begin{align*}
        \E\left|\xi^{u_k;\mu_k}_t(f)-\xi^{u_k;\mu}_t(f)\right|
        \xrightarrow[k\to\infty]{}0,
        \quad\textrm{for all $f\in C_b$}.
    \end{align*} 
    In particular, there exists a subsequence, which we still index by $k$, along which 
    \begin{align*}
        \Wc\left(\xi^{u_k;\mu_k}_t,\xi^{u_k;\mu}_t\right)
        \xrightarrow[k\to\infty]{}0,
        \quad\as{}
    \end{align*}
\end{theorem}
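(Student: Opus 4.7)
The plan is to interpose atomic approximations of the initial data, apply Proposition~\ref{thm:cont_MVM_finite}(ii) to those atomic initial conditions, and control the residual approximation error via a moment-based energy estimate that crucially exploits the strict inclusion $\supp(\mu_0)\subset(-R,R)$.

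First, I would fix $N\in\Nb$, partition $\supp(\mu_0)$ into disjoint sets $A^N_1,\dots,A^N_N$ of diameters vanishing as $N\to\infty$ and whose boundaries are continuity sets of $\mu$, pick representatives $x^N_i\in A^N_i$, and introduce the atomic projection $\Pi^N\nu:=\sum_{i=1}^N\nu(A^N_i)\delta_{x^N_i}$. Setting $\mu^N:=\Pi^N\mu$ and $\mu^N_k:=\Pi^N\mu_k$, the triangle inequality splits $\E\bigl|\xi^{u_k;\mu_k}_t(f)-\xi^{u_k;\mu}_t(f)\bigr|$ into three pieces: the middle one is $\E\bigl|\xi^{u_k;\mu^N_k}_t(f)-\xi^{u_k;\mu^N}_t(f)\bigr|$, which is directly controlled by Proposition~\ref{thm:cont_MVM_finite}(ii) and vanishes as $k\to\infty$ for fixed $N$ (since $\mu_k(A^N_i)\to\mu(A^N_i)$ by weak convergence on continuity sets). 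It remains to show that the outer terms tend to $0$ as $N\to\infty$ uniformly in $u_k$ and in $\nu\in\{\mu\}\cup\{\mu_k\}_{k\in\Nb}$.

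For this, I would couple $\xi^{u_k;\nu^N}$ and $\xi^{u_k;\nu}$ through the common Brownian motion $W$, test \eqref{eq:sde} against the monomial $\varphi_n(x)=x^n$, and set $D_n(t):=(\xi^{u_k;\nu^N}_t-\xi^{u_k;\nu}_t)(\varphi_n)$. It\^o's isometry gives $\E D_n(t)^2=D_n(0)^2+\int_0^t\E\sigma_n(s)^2\,\di s$, where $\sigma_n$ decomposes into three terms involving $D_{n+i}$, $D_n$ and $D_i$, weighted respectively by coefficients of $u_k$ and by moments of $\xi^{u_k;\nu^N}$ and $\xi^{u_k;\nu}$. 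Writing $R':=\sup\{|x|:x\in\supp(\mu_0)\}<R$ and choosing a geometric weight $\rho\in\bigl(R^{-2},(R')^{-2}\bigr)$, I would consider the weighted squared error
\begin{align*}
    \Phi(t):=\sum_{n\ge 0}\rho^n D_n(t)^2.
\end{align*}
The Cauchy--Schwarz bound $\bigl(\sum_i v_i a_i\bigr)^2\le K\sum_i a_i^2/R^{2i}$ supplied by the definition of $\Uc$, together with the uniform bound $|\xi^{u_k;\nu}_s(\varphi_n)|\le (R')^n$, lets me estimate $\sum_n\rho^n\E\sigma_n(s)^2\le C\,\E\Phi(s)$ with $C$ depending only on $K,R,R'$; here $\rho R^2>1$ is needed for the sums over the index $i$ to converge while $\rho(R')^2<1$ is needed for the moment sums to converge, and the strict inequality $R'<R$ is precisely what allows both to be achieved simultaneously. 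Gronwall then yields $\E\Phi(t)\le\Phi(0)e^{Ct}$. Since $\Phi(0)=\sum_n\rho^n[(\nu^N-\nu)(\varphi_n)]^2$ has each summand vanishing as $N\to\infty$ uniformly in $\nu$ (by the explicit estimate $|(\nu^N-\nu)(\varphi_n)|\le n(R')^{n-1}\max_i\mathrm{diam}(A^N_i)$) while being dominated by $4(\rho(R')^2)^n$, dominated convergence forces $\Phi(0)\to 0$ uniformly over the family of initial data, hence $\E D_n(t)^2\to 0$ for every $n\ge 0$.

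To pass from monomials to arbitrary $f\in C_b$, Weierstrass' theorem on the compact set $\supp(\mu_0)$ allows uniform approximation by polynomials, and linearity of $\xi_t(\cdot)$ transfers the convergence. The a.s.\ $\Wc$-convergence along a subsequence then follows by a standard diagonal argument on a countable family of test functions (e.g.\ polynomials with rational coefficients) separating points of $\Pc$, combined with the fact that weak convergence on the compact space $\Pc$ is equivalent to $\Wc$-convergence. The main technical obstacle is closing the Gronwall bound for $\Phi$: it relies on the weighted $\ell^2$ structure indexed by the moments and on the strict margin $R'<R$, which simultaneously guarantees summability of the initial data and absorbs the cross terms generated by the quadratic drift of the controlled MVM dynamics.
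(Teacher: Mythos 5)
Your proof is correct, but it follows a genuinely different route from the paper's. The paper works directly with arbitrary $f\in C_b$ and closes the argument via a Picard-type iteration scheme: it bounds $\delta^k_t(f)=\E|(\xi^{u;\mu_k}_t-\xi^{u;\mu}_t)(f)|^2$ through the integral inequality \eqref{eq: iteration form}, reinjects the crude bound $\delta^k_t(\cdot)\le 4\|\cdot\|^2$ repeatedly, and uses the factorial decay $t^m/m!$ to kill the tail before sending $k\to\infty$. You instead reduce to moments: by testing against the monomials $\varphi_n(x)=x^n$, the quadratic drift of the controlled MVM couples the errors $D_n$ through index \emph{shifts} $D_{n+i}$, which makes the geometrically weighted $\ell^2$ sum $\Phi(t)=\sum_n\rho^n D_n(t)^2$ close under the dynamics and yields a genuine Gronwall inequality $\E\Phi(t)\le\Phi(0)e^{Ct}$ --- something that is not directly available in the paper's formulation because the quantities $\delta^k_s(f\psi^i)$ do not shift cleanly. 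You also interpose atomic projections $\Pi^N$ and invoke Proposition~\ref{thm:cont_MVM_finite}(ii) for the middle leg of the triangle inequality, whereas the paper does not use that finite-dimensional stability result in this proof at all. Both approaches hinge on the strict inclusion $\supp(\mu_0)\subset(-R,R)$: the paper needs $L=\sum_i\|\psi^i\|^2<\infty$ with $\psi=\mathrm{id}/R$, and you need a weight $\rho$ in the nonempty interval $(R^{-2},(R')^{-2})$. What your route buys is a self-contained, one-shot Gronwall estimate replacing the iteration; what it costs is the extra bookkeeping of choosing $\mu$-continuity partitions for the portmanteau step, controlling the passage from monomials back to $C_b$ via Weierstrass (you need to choose $p$ first, then $N$ uniformly in $k$ and $\nu$, then $k$ --- the order matters and should be made explicit), and the reliance on the companion Proposition~\ref{thm:cont_MVM_finite}(ii). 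The final extraction of an a.s.\ $\Wc$-convergent subsequence via a countable separating family and diagonalisation matches the paper's Step~2.
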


\begin{proof}
    Let $u$ be a $\mathcal{U}$-valued progressively measurable process such that (\ref{eq:sde}) admits solutions $\xi^{u; \mu_k}$ and $\xi^{u; \mu}$ with initial conditions $\mu_k$ and $\mu$, $k \in \mathbb{N}$. For ease of notation, we define    
    \[
        \delta^k_t(f) = \mathbb{E} \left[ \left|(\xi^{u; \mu_k}_t - \xi^{u; \mu}_t)(f)\right|^2 \right], 
        \quad 
        f \in C_b. 
    \]
    Since \eqref{eq:sde} is indifferent with respect to parallel shifts in $h(v,\cdot)$, without loss of generality, we may assume that $v_0\equiv 0$ for all $v\in\Uc$. 
    
    \underline{Step 1}: We first show that $\delta^k_t(f) \to 0$ for all $f \in C_b$ when $k \to \infty$, which implies the $L_1$-convergence. Denote by $\|f\|$ the sup-norm of $f \in C_b$ on $\mathrm{supp}(\mu_0)$, and $\|h\| = \sup \{h(v, x): v \in \mathcal{U}, x \in \mathrm{supp}(\mu_0)\}$, which is well-defined by Lemma \ref{lem:U}. Note that for any $k \in \mathbb{N}$, 
    \begin{equation}\label{eq: brutal bound}
        \delta^k_t(f)
        \leq
        4 \|f\|^2. 
    \end{equation}
    By (\ref{eq:sde}), and by adding and subtracting the hybrid term $\xi^{u; \mu_k}_s(f) \xi^{u; \mu}_s (h(u_s, \cdot))$, $\delta^k_t(f)$ can be rewritten as
    \begin{align*}
        \delta^k_t(f) 
        &=
        \mathbb{E} 
        \bigg[ 
            \Big| 
                (\mu_k - \mu)(f) 
                + 
                \int_0^t \Big\{ \left( \xi^{u; \mu_k}_s - \xi^{u; \mu}_s \right) \left( f h(u_s, \cdot) \right)
                \\
                & \quad \quad \quad \quad \quad \quad \quad \quad
                + \xi^{u; \mu_k}_s(f) \left( \xi^{u; \mu}_s \left( h \left(u_s, \cdot \right) \right)
                - \xi^{u; \mu_k}_s \left( h \left(u_s, \cdot \right) \right) \right) 
                \\
                & \quad \quad \quad \quad \quad \quad \quad \quad
                + \xi^{u; \mu}_s (h(u_s, \cdot)) \left( \xi^{u; \mu}_s(f) - \xi^{u; \mu_k}_s(f) \right)  
                \Big\} \mathrm{d}W_s
            \Big|^2 
        \bigg]. 
    \end{align*}
    Applying the It\^o isometry and the inequality $(a_1 + \cdots + a_n)^2 \leq n(a_1^2 + \cdots + a_n^2)$, for all $a_i \in \mathbb{R}$, with $n=2$ and $n=3$, we get 
    \begin{align}\label{eq: stability polynomial R case iteration raw form}
        \delta^k_t(f)
        & \leq
        2 |(\mu_k - \mu)(f)|^2
        +
        6 \int_0^t \mathbb{E} 
        \left[ 
            \left| 
                \left( \xi^{u; \mu_k}_s - \xi^{u; \mu}_s \right)
                \left( f h(u_s, \cdot) \right)
            \right|^2 
        \right]
        \mathrm{d}s
        \nonumber
        \\
        & \quad   
        + 
        6 \|f\|^2 \int_0^t \mathbb{E}
        \left[
            \left|
                \left( \xi^{u; \mu_k}_s - \xi^{u; \mu}_s \right)
                \left( h(u_s, \cdot) \right)
            \right|^2
        \right] 
        \mathrm{d}s
        + 
        6 \|h\|^2 \int_0^t \delta^k_s(f) \mathrm{d}s. 
    \end{align}
    Note that for all $f \in C_b$, 
    \begin{align*}\label{eq: stability polynomial R case mid term one}
        \mathbb{E} 
        \left[
                \left( \xi^{u; \mu_k}_s - \xi^{u; \mu}_s \right)
                \left( f h(u_s, \cdot) \right)^2 
        \right] 
        & =
        \mathbb{E} 
        \left[
                \left( \xi^{u; \mu_k}_s - \xi^{u; \mu}_s \right)
                \left( f \sum_{i=1}^{\infty} (u_s)_i\mathrm{id}^i \right)^2
        \right]
        \\
        & =
        \mathbb{E} 
        \left[
            \left(
                \sum_{i=1}^{\infty} 
                (u_s)_i R^i 
                \left( \xi^{u; \mu_k}_s - \xi^{u; \mu}_s \right)
                \left( f \frac{\mathrm{id}^i}{R^i} \right) 
            \right)^2
        \right]
        \\
        &\leq
        \mathbb{E}
        \left[
            \left(
                \sum_{i=1}^{\infty}
                (u_s)_i^2 R^{2i}
            \right)
            \left(
                \sum_{i=1}^{\infty}
                    \left( \xi^{u; \mu_k}_s - \xi^{u; \mu}_s \right)
                    \left( f \frac{\mathrm{id}^i}{R^i} \right)^2
            \right)
        \right]
        \\
        &\leq 
        K \sum_{i=1}^{\infty} \delta_s^k \left( f \frac{\mathrm{id}^i}{R^{i}} \right). 
    \end{align*}
    Substituting the above estimate in (\ref{eq: stability polynomial R case iteration raw form}), we obtain the estimate 
    \begin{align*}
        \delta_t^k(f) 
        &\leq 
        2 |(\mu_k - \mu)(f)|^2
        \\
        & \quad 
        + 6 \int_0^t 
        \left(
            K \sum_{i=1}^{\infty} \delta_s^k \left( f \frac{\mathrm{id}^i}{R^{i}} \right) 
            +
            K \|f\|^2 \sum_{i=1}^{\infty} \delta_s^k \left( \frac{\mathrm{id}^i}{R^{i}} \right)
            +
            \|h\|^2 \delta_s^k(f)
        \right)
        \mathrm{d}s.
    \end{align*}    
    To further ease the notation, we denote 
    \[
        D_k(f) = 2|(\mu_k - \mu)(f)|^2, 
        \quad
        \psi = \frac{\mathrm{id}}{R}, 
        \quad
        L = \sum_{i=1}^{\infty} \left\| \psi^i \right\|^{2}, 
        \quad
        M = 6 \max\left(K, \|h\|^2\right), 
    \]
     which allows us to rewrite the above estimate as %and get a neater estimate 
    \begin{align} \label{eq: iteration form}
        \delta_t^k(f) 
        &\leq
        D_k(f) 
        +
        M \int_0^t 
        \left(
            \sum_{i=1}^{\infty} \delta_s^k \left( f \psi^i \right) 
            +
            \|f\|^2 \sum_{i=1}^{\infty} \delta_s^k \left( \psi^i \right)
            +
            \delta_s^k(f)
        \right)
        \mathrm{d}s.
    \end{align}
    We first apply the bound (\ref{eq: brutal bound}) to (\ref{eq: iteration form}) and get a new estimate 
    \begin{align}\label{eq: first iteration estimate}
        \delta_t^k(f) 
        & \leq 
        D_k(f)
        +
        M \int_0^t 
        \Bigg(
            \sum_{i=1}^{\infty} 4 \left\| f \psi^i \right\|^2
            +
            \|f\|^2 \sum_{i=1}^{\infty} 4 \left\| \psi^i \right\|^2
            +
            4 \|f\|^2
        \Bigg)
        \mathrm{d}s 
        \nonumber 
        \\
        & \leq 
        D_k(f)
        +  
        4M (2L + 1) \|f\|^2 t. 
    \end{align}
    We can apply (\ref{eq: first iteration estimate}) to $f \psi^{i}$, $\psi^{i}$, $i \in \mathbb{N}$, and $f$ in (\ref{eq: iteration form}), and again obtain a second new estimate 
    \begin{align}\label{eq: second iteration estimate}
        \delta_t^k(f)
        & \leq
        D_k(f)
        + 
        M \left( \sum_{i=1}^{\infty} D_k(f \psi^i) 
        +
        \|f\|^2 \sum_{i=1}^{\infty} D_k(\psi^i) 
        + 
        D_k(f) \right) t 
        \nonumber 
        \\
        & \quad \quad \quad \quad 
        + 4 M^2 (2L+1)^2 \frac{t^2}{2!}
        \|f\|^2. 
    \end{align}
    Again, we can apply (\ref{eq: second iteration estimate}) to $f \psi^{i}$, $\psi^{i}$, $i \in \mathbb{N}$, and $f$ in (\ref{eq: iteration form}), and obtain a third new estimate 
    \begin{align}
        \delta_t^k(f)
        &\leq
        D_k(f)
        +
        M 
        \left(
            \sum_{i=1}^{\infty} D_k(f \psi^i)
            + 
            \|f\|^2 \sum_{i=1}^{\infty} D_k(\psi^i)
            + 
            D_k(f)
        \right) t
        \nonumber 
        \\
        & \quad \quad \quad \quad 
        +
        M^2
        \bigg(
            \sum_{i_1, i_2 = 1}^{\infty} D_k(f \psi^{i_1 + i_2})
            +
            L \|f\|^2 \sum_{i=1}^{\infty} D_k(\psi^i)
            +
            \sum_{i=1}^{\infty} D_k(f \psi^i)
            \nonumber 
            \\
            & \;\; \quad \quad \quad \quad \quad %\quad 
            + 
            \|f\|^2 \sum_{i_1, i_2 = 1}^{\infty} D_k(\psi^{i_1 + i_2})
            +
            L \|f\|^2 \sum_{i=1}^{\infty} D_k(\psi^i) 
            + 
            \|f\|^2 \sum_{i=1}^{\infty} D_k(\psi^i) 
            \nonumber 
            \\
            & \;\; \quad \quad \quad \quad \quad %\quad 
            +
            \sum_{i=1}^{\infty} D_k(f \psi^i)
            +
            \|f\|^2 \sum_{i=1}^{\infty} D_k(\psi^i)
            +
            D_k(f)
        \bigg) \frac{t^2}{2!}
        \nonumber 
        \\
        & \quad \quad \quad \quad 
        +
        4 M^3 (2L+1)^3 \frac{t^3}{3!} \|f\|^2. 
        \nonumber 
    \end{align}
    We do this iteratively to (\ref{eq: iteration form}). If we regard (\ref{eq: first iteration estimate}) as the first iteration, then for the $m^{\text{th}}$ iteration, $m \geqslant 2$, $m \in \mathbb{N}$, we obtain the estimate 
    \begin{align}\label{eq: m th iteration}
        \delta^k_t(\textbf{}f)
        & \leq
        D_k(f)
        +
        M \Delta^k_1 t + \cdots + M^{m-1} \Delta^k_{m-1} \frac{t^{m-1}}{(m-1)!}
        +
        4 M^m (2L+1)^m \frac{t^m}{m!} \|f\|^2,
    \end{align}
    where each $\Delta^k_j, j=1, \ldots, m-1$, consists of
    of $3^j$ terms and is a linear combination of components of the forms  
    \begin{align}\label{eq: m iteration components}
        \sum_{i_1, \ldots, i_j = 1}^{\infty} D_k(f \psi^{i_1 + \dots + i_j}), 
        \ldots, 
        D_k(f), 
        \sum_{i_1, \ldots, i_j = 1}^{\infty} D_k(\psi^{i_1 + \dots + i_j}), 
        \ldots,  
        \sum_{i=1}^{\infty} D_k(\psi^i), 
    \end{align}
    with coefficients depending on $L$ and $\|f\|$. Note that for any $j = 1, \ldots, m-1$, 
    \begin{align*}
        \sum_{i_1, \ldots, i_j = 1}^{\infty} D_k(f \psi^{i_1 + \cdots + i_j}) 
        & \leq
        \sum_{i_1, \ldots, i_j = 1}^{\infty} 8 \|f \psi^{i_1 + \cdots + i_j}\|^2
        \leq
        8 \|f\|^2 L^j , 
    \end{align*}
    which means the series is convergent. Thus, for any $\varepsilon > 0$, there exists $N \in \mathbb{N}$ such that 
    \[
        \sum_{i_1, \ldots, i_j = 1}^{\infty} D_k(f \psi^{i_1 + \cdots + i_j})
        <
        \sum_{i_1, \ldots, i_j = 1}^{N} D_k(f \psi^{i_1 + \cdots + i_j}) 
        +
        \varepsilon. 
    \]
    Since $f\psi^i$ are Lipschitz for all $i \in \mathbb{N}$ and $\varepsilon$ is arbitrary, this means that every component in (\ref{eq: m iteration components}) converges to $0$ as $k \to \infty$; since each $\Delta_j^k$ has finitely many components with finite coefficients, it follows that all the $\Delta_j^k$, $j=1, \ldots, m-1$, converge to $0$ when $k \to \infty$. Therefore, from (\ref{eq: m th iteration}) we get 
    \begin{equation*}
        \lim_{k \to \infty} \delta^k_t(f) 
        \leq 
        4 M^m (2L+1)^m \frac{t^m}{m!} \|f\|^2. 
    \end{equation*} 
    Sending $m \to \infty$, we obtain the desired result. Note, in particular, that the estimates (\ref{eq: brutal bound}) and (\ref{eq: iteration form}) do not depend on $u$, so all the consequent iteration estimates do not depend on $u$. Hence, the convergence is uniform in $u$. 

    \hfill

    \noindent
    \underline{Step 2}: Since we have the $L_1$-convergence, for every $f \in C_b$, there is a subsequence, which we still index by $k$, such that 
    \[
        \left| \xi^{u_k; \mu_k}_t(f) - \xi^{u_k; \mu}_t(f) \right| \xrightarrow[k \to \infty]{} 0 \quad \text{a.s.} 
    \]
    We pick a countable subset $\{f_i\}_{i=1}^{\infty} \subset C_b$, where $f_1 \equiv 1$ and $\{f_i\}_{i=2}^{\infty}$ coincides with the polynomials with rational coefficients on $\mathrm{supp}(\mu_0)$. By a diagonalisation argument, there exists a subsequence, which we still index by $k$, such that 
    \[
        \left| \xi^{u_k; \mu_k}_t(f_i) - \xi^{u_k; \mu}_t(f_i) \right| \xrightarrow[k \to \infty]{} 0 \quad \text{a.s.}, \quad \text{for all } f_i \in \{f_i\}_{i=1}^{\infty}. 
    \]
    Since there are countably many $f_i$, there exists a probability one set, say $A$, such that for all $\omega \in A$, 
    \[
        \left| \xi^{u_k; \mu_k}_t(\omega) (f_i) - \xi^{u_k; \mu}_t(\omega) (f_i) \right| \xrightarrow[k \to \infty]{} 0, \quad \text{for all } f_i \in \{f_i\}_{i=1}^{\infty}. 
    \]
    Note that $\{f_i\}_{i=1}^{\infty}$ is an algebra which strongly separates points in $\text{supp}(\mu_0)$. Therefore, by \cite[Theorem 3.4.5(b)]{ethier_1986}, along this subsequence we have $\xi^{u_k; \mu_k} \Rightarrow \xi^{u_k; \mu}$ a.s. Since $\text{supp}(\mu_0)$ is compact, this gives us the desired result. 

    \hfill

    \noindent 
    \underline{Step 3}: We show pathwise uniqueness of solutions to \eqref{eq:sde}. Let $\xi^{u; \mu}$ and $\tilde\xi^{u; \mu}$ be two solutions to \eqref{eq:sde} defined on the same probability space and with respect to the same Brownian motion $W$, control $u$ and initial condition $\mu$. Define, for $f \in C_b$,
    \[
        \rho_t(f) = \mathbb{E} \left| \xi^{u; \mu}_t(f) - \tilde\xi^{u; \mu}_t(f) \right|^2. 
    \]
    Using similar arguments and notation as in step 1, we have
    $$
        \rho_t(f) \leq M \int_0^t \left( \sum_{i=1}^{\infty} \rho_s(f \psi^i) + \|f\|^2 \sum_{i=1}^{\infty} \rho_s(\psi^i) + \rho_s(f) \right) \df s.
    $$
    By applying the iteration procedure (note that it simplifies considerably since all $D_k$ vanish), we obtain that for any $m \in \mathbb{N}$,
    $$
        \rho_t(f) \leq 4 M^m (2L+1)^m \frac{t^m}{m!} \|f\|^2.
    $$
    Sending $m \to \infty$, we get $\rho_t(f) = 0$ for all $t\geq 0$ and $f \in C_b$. By arguments similar to those used in step 2, using the countable sequence $\{f_i\}_{i=1}^{\infty}$ and \cite[Theorem 3.4.5(a)]{ethier_1986}, we get $\xi^{u; \mu}_t = \tilde\xi^{u; \mu}_t$ a.s. for all $t \geq 0$. Moreover, by \cite[Remark 2.3]{cox_2024}, the trajectories of $\xi^{u; \mu}$ and $\tilde\xi^{u; \mu}$ are continuous. Hence, the two processes are indistinguishable, completing the proof. 
\end{proof}

\section{Viscosity theory and proof of main result}\label{sec:proof}

    In order to establish our main result, we will study in detail the limit of the auxiliary problem. 
    By definition, on $\Pc$,
    \begin{align}\label{eq:proof_main_ordering_continuity}
    V^{weak} \le V^+. 
    \end{align}
    Our strategy is to show that $V^+$ is a viscosity subsolution of an HJB equation, a property that ultimately relies on the fact that the DPP holds for the auxiliary problem. 
    From \cite{cox_2024}, we already know that $V^{weak}$ is a viscosity solution of the same equation, and that this equation does satisfy a comparison principle.
    Having established that the involved functions are sufficiently smooth, we may thus deduce that \eqref{eq:proof_main_ordering_continuity} holds with equality. 
    Constructing an optimal control for an approximate problem with sufficiently fine grid and closing the loop along the lines outlined in Section~\ref{sec:mvm_sde}, we may then complete the proof of our main result.
    The remainder of this section is devoted to carrying out this scheme.
    
    We note that a similar approach was used for the closely related finite-dimensional problem studied in \cite{cohen_2025}.
    More pertinently, the idea of justifying an approximation by relying on comparison for the associated HJB equation effectively goes back to \cite{barles_1991}, %\cite[Theorem~2.1]{barles_1991}
    meanwhile, the approach of sandwiching the main problem of interest between functions which more easily can be verified to satisfy the HJB equation, resembles the stochastic Perron method developed in \cite{bayraktar_2013}. The results in this section develop arguments of this type for the infinite-dimensional problem at hand.
    
    We first recall the definition of viscosity solutions which we make use of in this paper. 
    To this end, let $C^2(\Pc)$ denote the class of functions which are twice continuously differentiable in the sense of \cite[Definition~4.7]{cox_2024}. %\cite[Definitions~4.1~and~4.7]{cox_2024} 
    Further, recall that continuous MVMs have decreasing support in the sense that, with probability one, $\supp(\xi_t)\subseteq\supp(\xi_s)$, for $s\le t$; see \cite[Remark~2.3.(ii)]{cox_2024}. Motivated by this fact, consider the partial order $\preceq$ defined on $\Pc$ by
    \[
        \mu\preceq\nu \quad\Longleftrightarrow\quad \supp(\mu)\subseteq\supp(\nu);
    \]
    MVMs are then decreasing with respect to this order. Utilising this property, the notion of viscosity solutions is then defined as follows:
    
\begin{definition}[Definition~6.4 in \cite{cox_2024}]\label{def:viscosity}
    A function $u:\Pc\to\R$ is a \emph{viscosity subsolution} (resp. \emph{supersolution}) of \eqref{eq:hjb} if    
    \begin{align*}
        \liminf_{\mu\to\bar\mu,\mu\preceq\bar\mu}%{\tiny \begin{array}{c}\mu\to\bar\mu\\\mu\preceq\bar\mu\end{array}}
        H\bigg(\mu,\varphi(\mu),\piann{\varphi}{\mu}(\cdot,\cdot;\mu)\bigg)\le 0
        \;\; \bigg(\textrm{resp.}\limsup_{\mu\to\bar\mu,\mu\preceq\bar\mu}%{\tiny \begin{array}{c}\mu\to\bar\mu\\\mu\preceq\bar\mu\end{array}}
        H\bigg(\mu,\varphi(\mu),\piann{\varphi}{\mu}(\cdot,\cdot;\mu)\bigg)\ge 0\bigg)
    \end{align*}
    holds for all $\bar\mu\in\Pc$ and $\varphi\in C^2(\Pc)$ such that 
    \begin{align*}
        \varphi(\bar\mu)=\limsup_{\mu\to\bar\mu,\;\mu\preceq\bar\mu}%{\tiny \begin{array}{c}\mu\to\bar\mu\\\mu\preceq\bar\mu\end{array}}
        u(\mu)
        \quad
        \left(\textrm{resp. }\varphi(\bar\mu)=\liminf_{\mu\to\bar\mu,\;\mu\preceq\bar\mu}%{\tiny \begin{array}{c}\mu\to\bar\mu\\\mu\preceq\bar\mu\end{array}}
        u(\mu)\right), 
    \end{align*}
    and $\varphi(\mu)\ge u(\mu)$ (resp. $\varphi(\mu)\le u(\mu)$) for all $\mu\preceq\bar\mu$.
    
    It is a viscosity solution if it is both a sub- and supersolution. 
    \end{definition}

    \subsection{DPP for piecewise constant controls}
 
    The main result of this section is the dynamic programming principle for the auxiliary problem.
    We start by establishing a continuity property of the auxiliary value function.

\begin{proposition}\label{prop:properties_infinite}
    Let $n\in\Nb$.
    The function $\hat J(u;\cdot)$ is continuous on $\Pc$ uniformly in $u\in\Ac^n$. 
    In consequence, the function $V^n$ is continuous on $\Pc$. 
\end{proposition}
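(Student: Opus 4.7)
The approach is to combine the stability result of Theorem~\ref{thm:cont_MVM_poly} with the uniform equicontinuity of the cost functional on the compact space $\Pc$, together with a tail truncation exploiting the discount factor. Since $\Pc$ is compact and, by Assumption~\ref{ass:U}, $k(\cdot,v)$ is continuous on $\Pc$ uniformly in $v\in\Uc$, the family $\{k(\cdot,v):v\in\Uc\}$ admits a common bounded modulus of continuity $\omega$ with $\omega(0+)=0$ such that $|k(\nu,v)-k(\nu',v)|\le\omega(\Wc(\nu,\nu'))$ for all $v\in\Uc$ and $\nu,\nu'\in\Pc$. Moreover, the boundedness of $k$ and $\beta>0$ allow us, given any $\varepsilon>0$, to choose $T>0$ with $\int_T^\infty e^{-\beta t}2\|k\|_\infty\dt<\varepsilon/2$.

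After this truncation, the task reduces to proving the displayed bound
\begin{align*}
\sup_{u\in\Ac^n,\,t\in[0,T]}\E\,\omega\bigl(\Wc(\xi^{u;\mu_k}_t,\xi^{u;\mu}_t)\bigr)\xrightarrow[k\to\infty]{}0
\end{align*}
whenever $\Wc(\mu_k,\mu)\to 0$. I would argue by contradiction: if the bound fails, one extracts sequences $u_k\in\Ac^n$ and $t_k\in[0,T]$ along which $\E\omega\bigl(\Wc(\xi^{u_k;\mu_k}_{t_k},\xi^{u_k;\mu}_{t_k})\bigr)\ge\varepsilon_0>0$; by compactness of $[0,T]$ we may assume $t_k\to t^*$.

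The crucial step is then Theorem~\ref{thm:cont_MVM_poly} applied at $t^*$ with the controls $u_k$: it yields $L^1$-convergence of $\xi^{u_k;\mu_k}_{t^*}(f)-\xi^{u_k;\mu}_{t^*}(f)$ to zero for each $f\in C_b$ (and, as observed in Step~1 of its proof, the bounds are uniform in the control). Fixing a countable convergence-determining family in $C_b$ and passing to a diagonal subsequence, this upgrades to almost sure weak convergence $\xi^{u_k;\mu_k}_{t^*}\Rightarrow\xi^{u_k;\mu}_{t^*}$, which on the compact space $\Pc$ coincides with $\Wc$-convergence. To move from $t^*$ to $t_k$, I would use the SDE \eqref{eq:dynamics_controlled} together with the uniform bound on $h$ from Lemma~\ref{lem:U} to establish the $L^2$ modulus $\E|\xi^{u;\mu}_t(f)-\xi^{u;\mu}_s(f)|^2\le C\|f\|_\infty^2|t-s|$ uniform in $u\in\Ac^n$ and $\mu\in\Pc$, and a further diagonalisation would then give $\Wc(\xi^{u_k;\mu}_{t^*},\xi^{u_k;\mu}_{t_k})\to 0$ and $\Wc(\xi^{u_k;\mu_k}_{t^*},\xi^{u_k;\mu_k}_{t_k})\to 0$ in probability; bounded convergence for $\omega$ then contradicts the standing lower bound.

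Putting everything together, the inequality
\begin{align*}
|\hat J(u;\mu_k)-\hat J(u;\mu)|\le\int_0^T e^{-\beta t}\E\omega\bigl(\Wc(\xi^{u;\mu_k}_t,\xi^{u;\mu}_t)\bigr)\dt+\frac{\varepsilon}{2}
\end{align*}
combined with the above claim and dominated convergence in $t$ yields continuity of $\hat J(u;\cdot)$ uniform in $u\in\Ac^n$. Continuity of $V^n$ then follows from $|V^n(\mu)-V^n(\mu')|\le\sup_{u\in\Ac^n}|\hat J(u;\mu)-\hat J(u;\mu')|$. The main obstacle is the simultaneous uniformity over $u\in\Ac^n$ and $t\in[0,T]$; this is precisely why I proceed by contradiction and appeal to Theorem~\ref{thm:cont_MVM_poly}, whose stability estimate is uniform along any sequence of controls, rather than trying to exhibit an explicit modulus of continuity.
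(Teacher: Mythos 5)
Your argument is correct, but it takes a genuinely different route from the paper's. The paper's proof sidesteps the simultaneous uniformity over $u\in\Ac^n$ and $t\in[0,T]$ -- the very issue you identify as the main obstacle -- by exploiting two structural facts: the modulus of continuity $w$ of $k(\cdot,v)$ can be chosen \emph{concave}, and, for MVMs driven by the same control and the same Brownian motion, $t\mapsto\Wc(\xi^{u;\mu}_t,\xi^{u;\nu}_t)$ is a \emph{submartingale} (\cite[Proposition~4.1.(i)]{beiglbock_2017}). Jensen's inequality together with the submartingale property collapses the integral over $[0,T]$ to a bound of the form $T\,w\bigl(\E\,\Wc(\xi^{u_k;\mu_k}_T,\xi^{u_k;\mu}_T)\bigr)$, so that Theorem~\ref{thm:cont_MVM_poly} needs to be applied at the single terminal time $T$ only, along an arbitrary sequence of controls $u_k$; no time-regularity estimate and no $\sup_t$ are required. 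Your contradiction argument instead upgrades the stability statement to uniformity over $t$ by hand: you need the additional $L^2$-in-time modulus $\E|\xi^{u;\mu}_t(f)-\xi^{u;\mu}_s(f)|^2\le C\|f\|_\infty^2|t-s|$ (which is easy from the SDE and Lemma~\ref{lem:U}), a chain of diagonalisations over a countable determining family, and a bounded-convergence step in probability. This is all sound, and it has the small advantage of not needing concavity of $\omega$, but it is longer and reproves a time-regularity fact that the submartingale observation renders unnecessary. In short, you reached the right result by a heavier path; the submartingale property of the Wasserstein distance between coupled MVMs is the shortcut you missed.
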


\begin{proof}
    The second part follows immediately from the first since, for any $\mu,\nu\in\Pc$,
    $$ 
        \left|V^{n}(\mu)-V^{n}(\nu)\right|
        \le
        \sup_{u\in\Ac^n}\big|\hat J(u;\mu)-\hat J(u;\nu)\big|.
    $$
    To argue the first part, let $\mu\in\Pc$ and consider a sequence $\mu_k$ in $\Pc$ such that $\mu_k\to\mu$, and an arbitrary sequence $u_k$ in $\Ac^n$.
    For any $\varepsilon>0$, there exists some $T>0$ such that, for all $k\in\Nb$,  
    \begin{align}\label{eq:proof_J_n_1_infinite}
        \big|\hat J(u_k;\mu_k)-\hat J(u_k;\mu)\big|
        &\le 
        \E\int_0^T\left|k\big(\xi^{u_k;\mu_k}_t,(u_k)_t\big)-k\big(\xi^{u_k;\mu}_t,(u_k)_t\big)\right|\dt
        +\varepsilon\nonumber\\
        &\le 
        Tw\left(\E\;\Wc\left(\xi^{u_k;\mu_k}_T,\xi^{u_k;\mu}_T\right)\right)+\varepsilon,
    \end{align}
    where we used that, by the standing Assumption~\ref{ass:U}, $\mu\mapsto k(\mu,v)$ is continuous, uniformly in $v\in\Uc$, and thus admits a concave and continuous modulus of continuity $w$ on the compact set $\Pc$, and the fact that $\Wc(\xi^{u_k;\mu_k},\xi^{u_k;\mu})$ is a submartingale for $k\in\Nb$ (\cf{} \cite[Proposition~4.1.(i)]{beiglbock_2017}).
    
    Now, for any subsequence of $(\mu_k,u_k)$, according to Theorem~\ref{thm:cont_MVM_poly}, there exists a further subsequence, which we still index by $k$, along which $\Wc(\xi^{u_k;\mu_k}_T,\xi^{u_k;\mu}_T)$ converges to zero \as{}, and along which the first term on the right-hand side of \eqref{eq:proof_J_n_1_infinite} thus converges to zero; since any subsequence of the original sequence admits a further subsequence with this property, the convergence must hold also along the original sequence. 
    Since $\varepsilon>0$ was arbitrarily chosen, this completes the proof. 
\end{proof}
 
    Thanks to the above continuity properties, we may establish the DPP following the arguments developed in \cite{bouchard_2011}. By use of the DPP, we may then prove existence of an $\varepsilon$-optimal strategy for the auxiliary problem. 
    We present those results next; 
    for completeness, the proofs are provided in Appendix~\ref{sec:dpp}.
    
    For $v\in\Uc$, we write $\xi^{v;\mu}$ for the solution to \eqref{eq:dynamics_controlled} using the constant control $u_s\equiv v$. 
    
    \begin{theorem}[DPP]\label{thm:dpp}
        Let $n\in\Nb$.
        For every $\mu\in\Pc$, 
        \begin{align*}
        V^n(\mu)
        =
        \inf_{v\in\Uc}
        \E\left[\int_0^{\delta_n}e^{-\beta t}k\left(\xi^{v;\mu}_t,v\right)\dt+e^{-\beta\delta_n}V^n\left(\xi^{v;\mu}_{\delta_n}\right)\right].
        \end{align*}
    \end{theorem}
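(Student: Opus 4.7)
The plan is to establish the two opposing inequalities $V^n(\mu)\ge \widetilde{V}(\mu)$ and $V^n(\mu)\le\widetilde{V}(\mu)$, where $\widetilde{V}(\mu)$ denotes the right-hand side of the claimed identity. The key ingredients are the pseudo-Markov property (Lemma~\ref{lem:pseudo_markov}), the uniform-in-$u$ continuity of $\hat J(u;\cdot)$ in the initial condition (Proposition~\ref{prop:properties_infinite}), the continuity of $V^n$, and the fact that for any $u\in\Ac^n$ the value $u_0$ is $\Fc^W_0$-measurable and hence constant $\P$-\as{}; accordingly, I will write $u_0\equiv v$ for some $v\in\Uc$.

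For the lower bound $V^n(\mu)\ge\widetilde{V}(\mu)$, I fix $u\in\Ac^n$ and set $v=u_0$, so that $u_t\equiv v$ and $\xi^{u;\mu}_t=\xi^{v;\mu}_t$ on $[0,\delta_n)$. Splitting the integral defining $\hat J(u;\mu)$ at $\delta_n$ and applying Lemma~\ref{lem:pseudo_markov} (after taking expectations in its identity) yields
\begin{align*}
\hat J(u;\mu)=\E\left[\int_0^{\delta_n}e^{-\beta t}k(\xi^{v;\mu}_t,v)\dt\right]+e^{-\beta\delta_n}\E\left[\hat J(u^{\delta_n,\omega};\xi^{v;\mu}_{\delta_n}(\omega))\right],
\end{align*}
where for $\P$-\as{} $\omega$ the shifted control $u^{\delta_n,\omega}$ lies in $\Ac^n$. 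Bounding $\hat J(u^{\delta_n,\omega};\cdot)\ge V^n(\cdot)$ pointwise and taking the infimum over $u\in\Ac^n$ (which in particular lets the initial value $v$ range over $\Uc$) produces the desired estimate.

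For the reverse bound, I fix $v\in\Uc$ and $\varepsilon>0$. Using the continuity of $V^n$ together with the uniform-in-$u$ continuity of $\hat J(u;\cdot)$ on the compact set $\Pc$, I select a finite Borel partition $\{B_k\}$ of $\Pc$ so fine that both functions oscillate by at most $\varepsilon$ on each $B_k$, and pick representatives $\mu_k\in B_k$ together with $\varepsilon$-optimal controls $u^k\in\Ac^n$ for $V^n(\mu_k)$. I then concatenate by letting $u^*_t$ equal $v$ on $[0,\delta_n)$ and, on the event $\{\xi^{v;\mu}_{\delta_n}\in B_k\}$, equal the appropriate shift of $u^k$ on $[\delta_n,\infty)$. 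Since each $u^k$ is $\Tb^n$-piecewise constant and $\Fb^W$-adapted, a direct check gives $u^*\in\Ac^n$. Applying Lemma~\ref{lem:pseudo_markov} to $u^*$ and absorbing the oscillation errors through the partition estimates produces
\begin{align*}
\hat J(u^*;\mu)\le\E\left[\int_0^{\delta_n}e^{-\beta t}k(\xi^{v;\mu}_t,v)\dt\right]+e^{-\beta\delta_n}\E[V^n(\xi^{v;\mu}_{\delta_n})]+C\varepsilon,
\end{align*}
for an absolute constant $C$. Since $v\in\Uc$ and $\varepsilon>0$ are arbitrary, this delivers $V^n(\mu)\le\widetilde{V}(\mu)$.

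The main technical hurdle is the $\le$-direction: the continuation $u^k$ must be chosen measurably with respect to $\xi^{v;\mu}_{\delta_n}$, and it is precisely the uniform-in-$u$ continuity of $\hat J(u;\cdot)$ from Proposition~\ref{prop:properties_infinite} that allows this to be carried out via the partition argument above rather than through an explicit measurable-selection theorem, thereby sidestepping the usual measurability difficulties at the cost of only an arbitrarily small error.
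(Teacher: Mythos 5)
Your proof is correct and follows essentially the same route as the paper: the $\ge$ direction via the pseudo-Markov property (Lemma~\ref{lem:pseudo_markov}) and the definition of $V^n$, and the $\le$ direction via a finite partition argument exploiting the uniform continuity from Proposition~\ref{prop:properties_infinite} together with concatenated piecewise-constant controls, thereby avoiding a measurable-selection theorem. The only cosmetic difference is that you make explicit the observation that $u_0$ is deterministic (so the infimum over $u\in\Ac^n$ reduces directly to $\inf_{v\in\Uc}$), which the paper leaves implicit.
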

    \begin{corollary}\label{cor:optimiser_existence}
        For any $n\in\Nb$ and $\varepsilon>0$, there exists a measurable function $\hat u^n:\Pc\to\Uc$, such that for $\mu\in\Pc$,
        \begin{align*} 
        \hat J(u^{*,n};\mu)\le V^n(\mu)+\varepsilon,
        \end{align*}
        where $u^{*,n}\in\Ac^n$ is given by
        \begin{align}\label{eq:optimal_u}
            u^{*,n}_t=\sum_{r\in\Tb^n}\hat u^n\left(\xi^{u^{*,n},\mu}_r\right)\boldsymbol{1}_{[r,r+\delta_n)}(t),
            \quad t\in[0,\infty).
        \end{align}
    \end{corollary}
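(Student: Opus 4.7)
The strategy is to apply the DPP (Theorem~\ref{thm:dpp}) one step at a time and iterate, exploiting the discount factor $e^{-\beta\delta_n}$ to keep the accumulated one-step errors geometrically summable. To this end, define
$$
F(\mu,v) := \E\left[\int_0^{\delta_n} e^{-\beta t} k\big(\xi^{v;\mu}_t,v\big)\dt + e^{-\beta\delta_n}V^n\big(\xi^{v;\mu}_{\delta_n}\big)\right],
$$
so that Theorem~\ref{thm:dpp} reads $V^n(\mu)=\inf_{v\in\Uc}F(\mu,v)$. Given $\varepsilon>0$, pick $\tilde\varepsilon\in(0,\varepsilon)$ with $\tilde\varepsilon\le\varepsilon(1-e^{-\beta\delta_n})$.

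The first step is to construct a measurable map $\hat u^n:\Pc\to\Uc$ with $F(\mu,\hat u^n(\mu))\le V^n(\mu)+\tilde\varepsilon$ for every $\mu\in\Pc$. I would observe that $\Uc$ is compact in the product topology (each coordinate is bounded by $\sqrt{K}/R^i$, so Tychonoff applies), and that $v\mapsto h(v,\cdot)$ is continuous from $\Uc$ into $C(\supp(\mu_0))$ by an argument in the spirit of Lemma~\ref{lem:U}: coordinatewise convergence combined with the tail estimate $\sum_{i\ge N}|v_ix^i|\le\sqrt{K}(\sum_{i\ge N}(|x|/R)^{2i})^{1/2}$ yields uniform convergence of $h(v^{(k)},\cdot)$ on $\supp(\mu_0)$. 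Together with continuity of $V^n$ from Proposition~\ref{prop:properties_infinite}, continuity of $k$, and stability of the flow $\xi^{v;\mu}$ with respect to perturbations of both arguments (the $v$-stability being a variant of the argument used for Theorem~\ref{thm:cont_MVM_poly}), this delivers joint continuity of $F$ on $\Pc\times\Uc$; a measurable selection theorem (Berge maximum theorem, or Kuratowski--Ryll-Nardzewski) then provides the desired $\hat u^n$.

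With $\hat u^n$ in hand, define $u^{*,n}\in\Ac^n$ via the feedback prescription \eqref{eq:optimal_u}; its existence and $\Ac^n$-measurability follow from pathwise uniqueness of the flow, the measurability of $\hat u^n$ and piecewise constancy. Set $g(\mu):=\hat J(u^{*,n};\mu)-V^n(\mu)\ge0$. Splitting the integral defining $\hat J$ at $t=\delta_n$ and applying Lemma~\ref{lem:pseudo_markov} to the tail--and noting that, by the feedback structure, $(u^{*,n})^{\delta_n,\omega}$ coincides with the control generated by $\hat u^n$ on the MVM started at $\xi^{u^{*,n};\mu}_{\delta_n}(\omega)$--one obtains
$$
\hat J(u^{*,n};\mu)=\E\left[\int_0^{\delta_n} e^{-\beta t}k\big(\xi^{u^{*,n};\mu}_t,\hat u^n(\mu)\big)\dt\right]+e^{-\beta\delta_n}\E\left[\hat J\big(u^{*,n};\xi^{u^{*,n};\mu}_{\delta_n}\big)\right].
$$
Subtracting $V^n(\mu)$ and invoking the $\tilde\varepsilon$-optimality of $\hat u^n(\mu)$ yields $g(\mu)\le\tilde\varepsilon+e^{-\beta\delta_n}\E[g(\xi^{u^{*,n};\mu}_{\delta_n})]$. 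Since $k$ is bounded, $g$ is uniformly bounded on $\Pc$, and iterating this inequality gives $\sup_{\mu\in\Pc}g(\mu)\le\tilde\varepsilon/(1-e^{-\beta\delta_n})\le\varepsilon$.

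The main obstacle will be the measurable selection step: the delicate point is the regularity of $F(\mu,v)$ in the control variable $v$, which rests on establishing stability of \eqref{eq:dynamics_controlled} under perturbations of $v$ in the product topology on $\Uc$. Once that regularity is secured, the remainder of the argument is a standard discounted iteration combined with the pseudo-Markov property of Lemma~\ref{lem:pseudo_markov}.
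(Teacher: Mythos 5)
Your iteration step—splitting $\hat J$ at $\delta_n$, applying Lemma~\ref{lem:pseudo_markov}, using the feedback structure to identify the shifted control, and summing the geometric series of one-step errors—is exactly what the paper does, and the pre-scaling $\tilde\varepsilon\le\varepsilon(1-e^{-\beta\delta_n})$ is a correct bookkeeping of the accumulated error. What differs, and where the gap lies, is the construction of the measurable selector $\hat u^n$.

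You propose to obtain $\hat u^n$ via Berge/KRN applied to the one-step functional $F$, which requires joint continuity of $F$ on $\Pc\times\Uc$. Continuity in $\mu$ is available from Proposition~\ref{prop:properties_infinite} and Theorem~\ref{thm:cont_MVM_poly}, but continuity in $v$ requires a stability estimate for the flow $\xi^{v;\mu}$ under perturbations of $v$ in the product topology on $\Uc$. You flag this yourself as the delicate point and gesture at it being ``a variant of the argument for Theorem~\ref{thm:cont_MVM_poly},'' but that theorem handles stability in the initial condition, not the control; a genuinely new argument would be needed (roughly: $v^{(k)}\to v$ forces $h(v^{(k)},\cdot)\to h(v,\cdot)$ uniformly on $\supp(\mu_0)$, and one would then redo an iteration estimate comparing $\xi^{v^{(k)};\mu}$ with $\xi^{v;\mu}$). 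Nothing in the paper supplies this, and while compactness of $\Uc$ (which you correctly observe) would help, it is not exploited anywhere.

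The paper sidesteps all of this. It reuses the finite $\varepsilon$-net $\{A_i\}_{i=1}^N$ and the associated $\varepsilon/3$-optimal controls $u^i\in\Ac^n$ already constructed in the proof of Theorem~\ref{thm:dpp}, and defines $\hat u^n(\mu):=\sum_{i=1}^N\ind_{A_i}(\mu)\,u^i_0$, the time-zero action of $u^i$. This is a piecewise constant (finitely many pieces) function of $\mu$, hence trivially Borel measurable, and its near-optimality is inherited from the chain of inequalities \eqref{eq:meas_select_continuity} that relied only on uniform continuity of $V^n$ and $\hat J(u;\cdot)$ in $\mu$ (Proposition~\ref{prop:properties_infinite}). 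No regularity in $v$ is required. So your iteration is sound, but your selection step imports an unproved stability-in-$v$ result that the paper's explicit finite-cover construction renders unnecessary.
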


    \subsection{Subsolution property of limiting problem}

    In this section we establish the subsolution property of $V^+$.
    To ease the notation, we introduce the effective state space for an MVM starting at a measure $\bar\mu\in\Pc$; it is given by the set
\begin{equation*}
D_{\bar\mu} = \{\mu\in\Pc\colon \mu\preceq\bar\mu\}.
\end{equation*}
    Making use of the fact that, for any $u\in C^2(\Pc)$, the left-hand side of \eqref{eq:hjb} is lower semicontinuous in $\mu$, and the fact that $V^+$ is upper semicontinuous on $\Pc$ (being the decreasing limit of continuous functions by Proposition~\ref{prop:properties_infinite}), we also note that $V^+$ is a subsolution of \eqref{eq:hjb}, if and only if, for any $\bar\mu\in\Pc$ and $\varphi\in C^2(\Pc)$, one has the implication
\begin{align}\label{eq:subsolution_alternative_continuity}
\text{$\varphi(\bar\mu)=V^+(\bar\mu)$ and $\varphi>V^+$ on $D_{\bar\mu}\setminus\{\bar\mu\}$} 
\;\;\Longrightarrow\;\;
%\liminf_{\mu\to\bar\mu,\;\mu\preceq\bar\mu}
H\left(\bar\mu,\varphi(\bar\mu),\piann{\varphi}{\mu}(\cdot,\cdot;\bar\mu)\right)\le 0.
\end{align} 
Indeed, the `only if' is immediate. Meanwhile, the `if' follows by use of the same arguments as employed in the proof of \cite[Lemma~6.6]{cox_2024}.

\begin{lemma}\label{lem:pick_mu_n_continuity}
    Let $\bar\mu\in\Pc$ and $\varphi\in C^2(\Pc)$ such that $\varphi(\bar\mu)=V^+(\bar\mu)$ and $V^+-\varphi<0$ on $D_{\bar\mu}\setminus\{\bar\mu\}$.
    Then, there exist sequences $m_n\to\infty$ and $\mu_n\to\bar\mu$, $\mu_n\in D_{\bar\mu}$, such that $\mu_n$ attains the maximum of $V^{m_n}-\varphi$ on $D_{\bar\mu}$ and $V^{m_n}(\mu_n)\to V^+(\bar\mu)$.
    
    In particular, defining $\kappa_n:=(V^{m_n}-\varphi)(\mu_n)\ge 0$
    then $\kappa_n\to 0$ and $V^{m_n}\le \varphi+\kappa_n$ on $D_{\bar\mu}$. 
\end{lemma}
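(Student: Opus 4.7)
The plan rests on three facts about the setup: each $V^m$ is continuous on $\Pc$ (Proposition~\ref{prop:properties_infinite}); the sequence $V^m$ is monotonically decreasing in $m$ and converges pointwise to $V^+$; and $D_{\bar\mu}$ is a closed, hence compact, subset of $\Pc$, closedness following from the portmanteau theorem applied to the closed set $\supp(\bar\mu)$. Consequently, for each $m$ the continuous function $V^m - \varphi$ attains its maximum on $D_{\bar\mu}$; I pick $\mu_m$ to be any such maximiser and set $a_m := (V^m - \varphi)(\mu_m)$. Evaluating at $\bar\mu \in D_{\bar\mu}$ gives $a_m \ge V^m(\bar\mu) - V^+(\bar\mu) \ge 0$, and the monotonicity of $V^m$ in $m$ transfers to the sequence $a_m$.

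Next I would show that $\mu_m \to \bar\mu$, by verifying via compactness that every convergent subsequence $\mu_{m_k} \to \mu^* \in D_{\bar\mu}$ has limit $\bar\mu$. Fix $N \in \Nb$; for $m_k \ge N$ we have $V^{m_k} \le V^N$, and continuity of $V^N$ gives $\limsup_k V^{m_k}(\mu_{m_k}) \le V^N(\mu^*)$, which upon letting $N \to \infty$ yields $\limsup_k V^{m_k}(\mu_{m_k}) \le V^+(\mu^*)$. Combining this with $V^{m_k}(\mu_{m_k}) \ge \varphi(\mu_{m_k}) \to \varphi(\mu^*)$ (which follows from $a_{m_k} \ge 0$ and continuity of $\varphi$) gives $\varphi(\mu^*) \le V^+(\mu^*)$; the hypothesis $V^+ < \varphi$ on $D_{\bar\mu} \setminus \{\bar\mu\}$ then forces $\mu^* = \bar\mu$.

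The same $\limsup$ estimate, now applied along the full sequence together with $\varphi(\mu_m) \to \varphi(\bar\mu) = V^+(\bar\mu)$, gives $\limsup_m a_m \le 0$; since $a_m \ge 0$ and is decreasing, $a_m \downarrow 0$, and consequently $V^m(\mu_m) = \varphi(\mu_m) + a_m \to V^+(\bar\mu)$. Setting $m_n := n$ and $\kappa_n := a_n$ then delivers the required sequences, with the bound $V^{m_n} \le \varphi + \kappa_n$ on $D_{\bar\mu}$ being just the maximising property of $\mu_n$. The main obstacle lies in identifying $\mu^*$ as $\bar\mu$: because $V^+$ is only upper semicontinuous (being a decreasing limit of continuous functions), the upper bound on $\limsup_k V^{m_k}(\mu_{m_k})$ cannot come from continuity of $V^+$ directly, and one is forced into the two-step passage $k \to \infty$ using the dominating $V^N$ followed by $N \to \infty$.
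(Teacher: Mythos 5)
Your proof is correct and rests on exactly the same ingredients as the paper's (continuity of each $V^m$, monotone decrease to the upper semicontinuous $V^+$, compactness of $D_{\bar\mu}$, and the strict inequality $V^+<\varphi$ away from $\bar\mu$), and the argument is sound throughout — in particular the portmanteau justification of closedness of $D_{\bar\mu}$, the two-step $\limsup$ passage through a fixed $V^N$, and the identification of every subsequential limit of the maximisers with $\bar\mu$ are all fine. The only genuine difference is one of implementation: you let $m_n=n$ and argue softly by compactness and subsequence extraction that the maximisers $\mu_m$ of $V^m-\varphi$ cluster at $\bar\mu$, whereas the paper first constructs explicit radii $\varepsilon_n\downarrow 0$ and thresholds $N_n$, uses a finite covering (Dini-type) argument to push $V^m-\varphi$ strictly below $-\eta_n$ outside $B_{\varepsilon_n}(\bar\mu)$ for $m\ge N_n$, and then localises the maximiser inside $B_{\varepsilon_n}(\bar\mu)$. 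Your version is slightly more economical since it does not require a quantitative selection of $m_n$; the paper's version makes the localisation more explicit. Both deliver exactly the same conclusion.
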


\begin{proof}
    We first note that $D_{\bar\mu}$ is a closed subset of the compact set $\Pc$; hence, $D_{\bar\mu}$ is itself a compact set.
    For $n\in\Nb$, let $N_n\ge n$ and $\varepsilon_n\le 1/n$ such that, for all $m\ge N_n$, $\mu,\nu\in D_{\bar\mu}\cap B_{\varepsilon_n}(\bar\mu)$, 
    \begin{equation*}
        V^m(\mu)\le V^+(\bar\mu)+1/n
        \quad\textrm{and}\quad 
        \left|\varphi(\nu)-\varphi(\mu)\right|\le 1/n;
    \end{equation*}
    existence is guaranteed by the fact that $V^n$, $n\in\Nb$, is continuous and decreasing in $n$ (\cf{} Proposition~\ref{prop:properties_infinite}). By the upper semicontinuity of $V^+$ (\cf{} again Proposition~\ref{prop:properties_infinite}), there exists $\eta_n>0$ such that $V^+-\varphi\le-2\eta_n$ on $D_{\bar\mu}\setminus B_{\varepsilon_n}(\bar\mu)$. By possibly making $N_n$ even larger, we may ensure that, for all $m\ge N_n$, $\mu\in D_{\bar\mu}\setminus B_{\varepsilon_n}(\bar\mu)$,
    \begin{align}\label{eq:pick_sequence_continuity}
        (V^m-\varphi)(\mu)<-\eta_n.
    \end{align}
    Indeed, for any $\nu\in D_{\bar\mu}\setminus B_{\varepsilon_n}(\bar\mu)$, there exist $N^\nu$ and $\varepsilon^\nu$ such that \eqref{eq:pick_sequence_continuity} holds for all $m\ge N^\nu$, $\mu\in B_{\varepsilon^\nu}(\nu)$; a finite covering argument yields the claim. 

    Now, for $n\in\Nb$, pick $m_n\ge N_n$ and let
    \begin{align*}
        \mu_n\in\argmax_{\mu\in D_{\bar\mu}}\;(V^{m_n}-\varphi)(\mu). 
    \end{align*}
    Since $(V^{m_n}-\varphi)(\bar\mu)\ge 0$, it follows that $\mu_n\in B_{\varepsilon_n}(\bar\mu)$.
    We note that 
    \begin{align*}
        V^{m_n}(\mu_n)-V^+(\bar\mu)
        &\ge \varphi(\mu_n)-\varphi(\bar\mu)
        \ge-1/n,
    \end{align*}
    which completes the proof.
\end{proof}

Theorem~\ref{thm:dpp} together with Lemma~\ref{lem:pick_mu_n_continuity} immediately gives the following version of the DPP (note that the full strength of the lemma is in fact not used); we here focus on the inequality which will be needed for proving the subsolution property below and note that this inequality relies on the `deep inequality' of the DPP:

\begin{corollary}\label{cor:DPP_weak}
    Let $\bar\mu\in\Pc$ and $\varphi\in C^2(\Pc)$ such that $\varphi(\bar\mu)=V^+(\bar\mu)$ and $V^+-\varphi<0$ on $D_{\bar\mu}\setminus\{\bar\mu\}$. Then there exist sequences $m_n\to\infty$, $\mu_n\to\bar\mu$, $\mu_n\in D_{\bar\mu}$, such that     
    $$
        \varphi(\mu_n)\le \inf_{v\in\Uc}
        \E\left[\int_0^{\delta_{m_n}}e^{-\beta s}k\left(\xi^{v;\mu_n}_s,v\right)\ds+e^{-\beta\delta_{m_n}}\varphi\left(\xi^{v;\mu_n}_{\delta_{m_n}}\right)\right].
        %-\left(1-e^{-\beta\delta_{m_n}}\right)\kappa_n.
    $$     
\end{corollary}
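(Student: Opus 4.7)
The plan is to combine the DPP from Theorem~\ref{thm:dpp} for each $V^{m_n}$ with the selection of $(m_n,\mu_n,\kappa_n)$ provided by Lemma~\ref{lem:pick_mu_n_continuity}, and then rearrange using the exact equality $V^{m_n}(\mu_n)=\varphi(\mu_n)+\kappa_n$ at the selected points.

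First, I would apply Lemma~\ref{lem:pick_mu_n_continuity} to the given $\bar\mu$ and $\varphi$ to obtain sequences $m_n\to\infty$, $\mu_n\to\bar\mu$ with $\mu_n\in D_{\bar\mu}$, and $\kappa_n=(V^{m_n}-\varphi)(\mu_n)\downarrow 0$. Crucially, since $\mu_n$ is a maximiser of $V^{m_n}-\varphi$ on $D_{\bar\mu}$, one has the pointwise bound
\[
V^{m_n}(\mu)\le\varphi(\mu)+\kappa_n,\qquad\text{for all }\mu\in D_{\bar\mu}.
\]

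Next, I would apply Theorem~\ref{thm:dpp} to $V^{m_n}$ at $\mu_n$ with time step $\delta_{m_n}$ to get
\[
V^{m_n}(\mu_n)=\inf_{v\in\Uc}\E\left[\int_0^{\delta_{m_n}}e^{-\beta s}k\left(\xi^{v;\mu_n}_s,v\right)\ds+e^{-\beta\delta_{m_n}}V^{m_n}\left(\xi^{v;\mu_n}_{\delta_{m_n}}\right)\right].
\]
Since continuous MVMs have decreasing support and $\mu_n\preceq\bar\mu$, we have $\xi^{v;\mu_n}_{\delta_{m_n}}\preceq\mu_n\preceq\bar\mu$ almost surely, so $\xi^{v;\mu_n}_{\delta_{m_n}}\in D_{\bar\mu}$ a.s. Applying the pointwise bound from the previous step inside the expectation yields
\[
V^{m_n}(\mu_n)\le\inf_{v\in\Uc}\E\left[\int_0^{\delta_{m_n}}e^{-\beta s}k\left(\xi^{v;\mu_n}_s,v\right)\ds+e^{-\beta\delta_{m_n}}\varphi\left(\xi^{v;\mu_n}_{\delta_{m_n}}\right)\right]+e^{-\beta\delta_{m_n}}\kappa_n.
\]

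Finally, I would substitute $V^{m_n}(\mu_n)=\varphi(\mu_n)+\kappa_n$ on the left-hand side and move the $e^{-\beta\delta_{m_n}}\kappa_n$ term to the left, which produces the factor $(1-e^{-\beta\delta_{m_n}})\kappa_n$ on the right and gives exactly the stated inequality. The proof is essentially a bookkeeping exercise; there is no genuine obstacle, the only point requiring care is the use of the support-monotonicity of MVMs to guarantee that the terminal state lies in $D_{\bar\mu}$ so that the envelope inequality $V^{m_n}\le\varphi+\kappa_n$ can be applied pathwise.
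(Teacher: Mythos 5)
Your proof is correct and is exactly the (unwritten) argument the paper intends when it says Theorem~\ref{thm:dpp} and Lemma~\ref{lem:pick_mu_n_continuity} "immediately" give the result: use the maximiser from Lemma~\ref{lem:pick_mu_n_continuity} to get the envelope bound $V^{m_n}\le\varphi+\kappa_n$ on $D_{\bar\mu}$, insert it into the DPP at $\mu_n$ via the support-monotonicity of continuous MVMs (so that $\xi^{v;\mu_n}_{\delta_{m_n}}\in D_{\bar\mu}$ a.s.), and rearrange with $V^{m_n}(\mu_n)=\varphi(\mu_n)+\kappa_n$. You correctly isolate the one point that genuinely requires care, namely that the terminal measure stays in $D_{\bar\mu}$ so the envelope inequality can be applied pathwise.
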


Next we establish a certain `consistency property': 

\begin{proposition}\label{prop:consistency}
    Let $\bar\mu\in\Pc$. For any $\varphi\in C^2(\Pc)$, 
    \begin{align*}
        \liminf_{\substack{h\downarrow 0\\\mu\to\bar\mu}}
        \frac{1}{h}\bigg\{\varphi(\mu)-
        \inf_{v\in\Uc}\E\bigg[\int_0^h e^{-\beta s}k\left(\xi^{v;\mu}_s,v\right)\ds & +e^{-\beta h}\varphi\left(\xi^{v;\mu}_h\right)\bigg]\bigg\}\\
        &\ge
        H\left(\bar\mu,\varphi(\bar\mu),\piann{\varphi}{\mu}(\cdot,\cdot;\bar\mu)\right).
    \end{align*}
\end{proposition}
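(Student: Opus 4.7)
The plan is to produce, for each fixed $v\in\Uc$, a pointwise expansion of $\varphi(\mu)-\E[\int_0^h e^{-\beta s}k(\xi^{v;\mu}_s,v)\ds + e^{-\beta h}\varphi(\xi^{v;\mu}_h)]$ via the measure-valued It\^o formula of \cite{cox_2024}, divide by $h$, pass to the limit, and then take the supremum over $v$. Since
\begin{align*}
\varphi(\mu)-\inf_{v\in\Uc}\E\!\left[\int_0^h e^{-\beta s}k(\xi^{v;\mu}_s,v)\ds + e^{-\beta h}\varphi(\xi^{v;\mu}_h)\right]
= \sup_{v\in\Uc}\left\{\varphi(\mu)-\E[\,\cdots\,]\right\},
\end{align*}
it suffices to establish the desired liminf bound for every fixed $v$ and then use that the liminf of a supremum dominates the supremum of the pointwise liminfs.

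Fix $v\in\Uc$. Recalling $\sigma(v,\nu;\dx)=(h(v,x)-\nu(h(v,\cdot)))\nu(\dx)$, the quadratic variation structure of \eqref{eq:dynamics_controlled} together with the It\^o formula in \cite[Section 4]{cox_2024} gives, modulo a martingale,
\begin{align*}
\di\varphi(\xi^{v;\mu}_s)
= \mathrm{dM}_s + \tfrac{1}{2}\int_{\R\times\R}\piann{\varphi}{\mu}(x,z;\xi^{v;\mu}_s)\,\sigma(v,\xi^{v;\mu}_s;\dx)\sigma(v,\xi^{v;\mu}_s;\dz)\,\ds.
\end{align*}
Applying the product rule to $e^{-\beta s}\varphi(\xi^{v;\mu}_s)$ and taking expectations yields
\begin{align*}
\varphi(\mu)-\E\!\left[\int_0^h\!\! e^{-\beta s}k(\xi^{v;\mu}_s,v)\ds + e^{-\beta h}\varphi(\xi^{v;\mu}_h)\right]
= \E\int_0^h e^{-\beta s}G(v,\xi^{v;\mu}_s)\ds,
\end{align*}
where
\begin{align*}
G(v,\nu):= \beta\varphi(\nu)-k(\nu,v)-\tfrac{1}{2}\int_{\R\times\R}\piann{\varphi}{\mu}(x,z;\nu)\,\sigma(v,\nu;\dx)\sigma(v,\nu;\dz).
\end{align*}

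Dividing by $h$ reduces the question to identifying the limit of $\frac{1}{h}\E\int_0^h e^{-\beta s}G(v,\xi^{v;\mu}_s)\ds$ as $h\downarrow 0$ and $\mu\to\bar\mu$. By the $C^2$-regularity of $\varphi$ and Assumption~\ref{ass:U}, the map $\nu\mapsto G(v,\nu)$ is bounded and continuous on $\Pc$ (using Lemma~\ref{lem:U} for the boundedness of $h(v,\cdot)$); moreover, as $s\downarrow 0$ we have $\xi^{v;\mu}_s\to\mu$ a.s.\ by path-continuity of the MVM, and Theorem~\ref{thm:cont_MVM_poly} yields convergence of $\xi^{v;\mu}_s$ to $\xi^{v;\bar\mu}_s$ as $\mu\to\bar\mu$. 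A standard dominated convergence argument (using the boundedness of $G(v,\cdot)$ on the compact space $\Pc$) then gives $\frac{1}{h}\E\int_0^h e^{-\beta s}G(v,\xi^{v;\mu}_s)\ds\to G(v,\bar\mu)$. Taking the supremum over $v\in\Uc$ on the right-hand side produces exactly $H(\bar\mu,\varphi(\bar\mu),\piann{\varphi}{\mu}(\cdot,\cdot;\bar\mu))$, completing the proof.

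The main technical point to handle carefully is the interchange of supremum and liminf: we only use the easy direction, namely that for every $v$ the pointwise liminf bound holds, whence the liminf of the supremum over $v$ dominates the supremum of the pointwise limits. A secondary subtlety is the joint limit $(h,\mu)\to(0,\bar\mu)$: the dominated convergence step above is uniform because $G(v,\cdot)$ is bounded on the compact space $\Pc$ by a constant independent of the auxiliary parameters, and the MVM stability estimate from Theorem~\ref{thm:cont_MVM_poly} controls $\xi^{v;\mu}_s$ simultaneously in $s\in[0,h]$ and in $\mu$.
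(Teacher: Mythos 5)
Your proposal is correct and follows essentially the same route as the paper: Itô/Dynkin formula for the controlled MVM, pointwise-in-$v$ convergence of the generator term, and the easy inequality $\liminf\sup\ge\sup\liminf$ (the paper phrases this as fixing $v$, bounding the infimum from above, and taking $\sup_v$ at the end, which is the same idea). The only place your write-up is looser than the paper's is the joint limit $(h,\mu)\to(0,\bar\mu)$: invoking ``dominated convergence'' and boundedness of $G(v,\cdot)$ does not by itself give the uniformity in $s\in[0,h]$; you should make explicit, as the paper does, that $\Wc(\xi^{v;\mu},\xi^{v;\bar\mu})$ is a submartingale, so $\E\,\Wc(\xi^{v;\mu}_s,\xi^{v;\bar\mu}_s)\le\E\,\Wc(\xi^{v;\mu}_1,\xi^{v;\bar\mu}_1)$ for all $s\le 1$, which together with Theorem~\ref{thm:cont_MVM_poly} and path-continuity of $\xi^{v;\bar\mu}$ yields the uniform control needed before the final dominated convergence step.
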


\begin{proof}
Let $\mu\in\Pc$, $v\in\Uc$ and $\varphi\in C^2(\Pc)$. By Lemma~\ref{lem:U}, $h(v,\cdot)$ is bounded on $\supp(\mu_0)$. Hence, for any $f\in C_b$, we have $\P\otimes\dt$-a.e.,
\begin{align}\label{eq:ito_ass}
    \int_0^t\left(\int_\R f(x)
    \left|h(v,x)-\xi^{v;\mu}_s(h(v,\cdot))\right|
    \xi^{v;\mu}_s(\dx)
    \right)^2\ds<\infty.
\end{align}
We may thus apply the It{\^o} formula \cite[Theorem~5.1]{cox_2024} to obtain, for $t\ge 0$, 
\begin{align}\label{eq:ito_appl}
    e^{-\beta t}\varphi\left(\xi^{v;\mu}_{t}\right)-\varphi(\mu)
    =&\int_0^te^{-\beta s}\left(-\beta\varphi(\xi^{v;\mu}_s)+L\varphi(\xi^{v;\mu}_s,v)\right)\ds\nonumber\\
    &+\int_0^t e^{-\beta s}\int_\R\pian{\varphi}{\mu}(x;\xi^{v;\mu}_s)\sigma(v,\xi^{v;\mu}_s;\dx)\di W_s,
\end{align}
where $\sigma(v,\mu;\dx)$ is given by \eqref{eq:def_sigma} and
\begin{align*}
    L\varphi(\mu,v)
    = 
    \frac{1}{2}\int_{\R\times\R}\piann{\varphi}{\mu}(x,y;\mu)
    \sigma(v,\mu;\dx)
    \sigma(v,\mu;\dy).
\end{align*}
Making again use of \eqref{eq:ito_ass}, we see that the last integral in \eqref{eq:ito_appl} is in fact a martingale; we thus obtain 
\begin{align}\label{eq:consistency_dynkin}
     e^{-\beta t}\E\left[\varphi\left(\xi^{v;\mu}_t\right)\right]-\varphi(\mu)
     =
     \E\int_0^t e^{-\beta s}\Big(-\beta\varphi(\xi^{v;\mu}_s)+L\varphi(\xi^{v;\mu}_s,v)\Big)\ds,
     \quad t\ge 0. 
\end{align}

Next, consider sequences $h_k\downarrow 0$ and $\mu_k\to\mu$; without loss of generality, suppose that $h_k\le 1$, $k\in\Nb$. For $v\in\Uc$, by Lemma~\ref{lem:U}, $h(v,\cdot)$ is continuous on $\supp(\mu_0)$ and therefore $L\varphi(\cdot,v)$ is continuous and thus admits a concave and continuous modulus of continuity, say $w$, on the compact set $\Pc$. Moreover, $\Wc(\xi^{v;\mu_k},\xi^{v;\mu})$ is a submartingale for $k\in\Nb$ (\cf{} \cite[Proposition~4.1.(i)]{beiglbock_2017}). Hence,
\begin{align}\label{eq:consistency_convergence}
    \frac{1}{h_k}\E\bigg[\int_0^{h_k}\Big|L\varphi(\xi^{v;\mu_k}_s,v)-L\varphi(\xi^{v;\mu}_s,v)\Big|\ds\bigg]
    \le 
    w\left(\E\;\Wc\left(\xi^{v;\mu_k}_1,\xi^{v;\mu}_1\right)\right)
    \xrightarrow[n\to\infty]{}0,
\end{align}
where the convergence follows from Theorem~\ref{thm:cont_MVM_poly}. 
Indeed, according to this result, for any subsequence of $\mu_k$, there exists a further subsequence, which we still index by $k$, along which $\Wc(\xi^{v;\mu_k}_1,\xi^{v;\mu}_1)\to 0$ \as{}, and along which the right-hand side of \eqref{eq:consistency_convergence} thus converges to zero; since any subsequence of the original sequence admits a further subsequence with this property, the convergence must hold also along the original sequence.
The same argument applies when replacing $L\varphi(\cdot,v)$ by $\varphi$ and $k(\cdot,v)$, making use of the standing Assumption~\ref{ass:U}.

Finally, for $\mu\in\Pc$, define 
\begin{align*}
    \Tc_h\varphi(\mu)
    :=\inf_{v\in\Uc}
        \E\bigg[\int_0^h e^{-\beta s}k\left(\xi^{v;\mu}_s,v\right) & \ds+e^{-\beta h}\varphi\left(\xi^{v;\mu}_h\right)\bigg].
\end{align*}
Fix now $\bar\mu\in\Pc$ and let $h_k\downarrow 0$ and $\mu_k\to\bar\mu$ such that
\begin{align*}
    \liminf_{\substack{h\downarrow 0\\\mu\to\bar\mu}}
    \frac{1}{h}\big(\varphi(\mu)-\Tc_h\varphi(\mu)\big)
    =
    \liminf_{k\to\infty}\frac{1}{h_k}\big(\varphi(\mu_k)-\Tc_{h_k}\varphi(\mu_k)\big).
\end{align*}
Let now also $v\in\Uc$ be fixed.
Making use of \eqref{eq:consistency_dynkin} and \eqref{eq:consistency_convergence}, we then obtain
\begin{align*}
    \liminf_{k\to\infty} & \frac{1}{h_k}\big(\varphi(\mu_k)-\Tc_{h_k}\varphi(\mu_k)\big)\\
    &\ge \liminf_{k\to\infty}\frac{1}{h_k}\left\{\varphi(\mu_k)-\E\bigg[\int_0^{h_k}e^{-\beta s}k\left(\xi^{v;\mu_k}_s,v\right)\ds+e^{-\beta h_k}\varphi\left(\xi^{ v;\mu_k}_{h_k}\right)\bigg]\right\}\\
    &= \liminf_{k\to\infty}
    \frac{1}{h_k}\E\bigg[\int_0^{h_k}e^{-\beta s}\Big(\beta\varphi(\xi^{v;\mu_k}_s)-k\left(\xi^{v;\mu_k}_s,v\right)-L\varphi(\xi^{v;\mu_k}_s,v)\Big)\ds\bigg]\\
    &= \liminf_{k\to\infty}
    \frac{1}{h_k}\E\bigg[\int_0^{h_k}e^{-\beta s}\Big(\beta\varphi(\xi^{v;\bar\mu}_s)-k\left(\xi^{v;\bar\mu}_s,v\right)-L\varphi(\xi^{v;\bar\mu}_s,v)\Big)\ds\bigg]\\
    &=
    \beta\varphi(\bar\mu)-k\left(\bar\mu,v\right)-L\varphi(\bar\mu,v),
    \end{align*}
    where, in order to obtain the last equality, we argued convergence by relying on the fundamental theorem of calculus for the inner integral and dominated convergence for the expected value. 
    Since $v\in\Uc$ was arbitrarily chosen, we may now take the supremum over $v\in\Uc$ to conclude.
\end{proof}

    We are now ready to establish that the limit of the auxiliary problem is a subsolution of the HJB equation.
    
\begin{theorem}\label{thm:V+_visc}
        The function $V^+$ is a viscosity subsolution of \eqref{eq:hjb}. 
    \end{theorem}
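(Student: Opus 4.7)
The plan is to verify the equivalent characterisation \eqref{eq:subsolution_alternative_continuity}, which is available because $V^+$ is upper semicontinuous (as the decreasing limit of continuous functions $V^n$ by Proposition~\ref{prop:properties_infinite}) and the map $\mu\mapsto H(\mu,\varphi(\mu),\partial^2\varphi/\partial\mu^2(\cdot,\cdot;\mu))$ is lower semicontinuous for any $\varphi\in C^2(\Pc)$. Thus, fix $\bar\mu\in\Pc$ and $\varphi\in C^2(\Pc)$ with $\varphi(\bar\mu)=V^+(\bar\mu)$ and $\varphi>V^+$ on $D_{\bar\mu}\setminus\{\bar\mu\}$; it suffices to establish that $H(\bar\mu,\varphi(\bar\mu),\partial^2\varphi/\partial\mu^2(\cdot,\cdot;\bar\mu))\le 0$.

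The idea is to assemble the three preparatory results already developed in this section. First, Lemma~\ref{lem:pick_mu_n_continuity} supplies sequences $m_n\to\infty$, $\mu_n\to\bar\mu$ with $\mu_n\in D_{\bar\mu}$, and $\kappa_n=(V^{m_n}-\varphi)(\mu_n)\downarrow 0$ (note $\kappa_n\ge 0$, since $\mu_n$ maximises $V^{m_n}-\varphi$ over $D_{\bar\mu}$ and $V^{m_n}(\bar\mu)\ge V^+(\bar\mu)=\varphi(\bar\mu)$). Second, Corollary~\ref{cor:DPP_weak} then yields the inequality
\begin{align*}
\varphi(\mu_n)-\inf_{v\in\Uc}\E\left[\int_0^{\delta_{m_n}}e^{-\beta s}k\left(\xi^{v;\mu_n}_s,v\right)\ds+e^{-\beta\delta_{m_n}}\varphi\left(\xi^{v;\mu_n}_{\delta_{m_n}}\right)\right]
\le -\left(1-e^{-\beta\delta_{m_n}}\right)\kappa_n.
\end{align*}
Dividing by $\delta_{m_n}>0$ and sending $n\to\infty$, the right-hand side tends to $0$ because $\kappa_n\downarrow 0$ while $(1-e^{-\beta\delta_{m_n}})/\delta_{m_n}\to\beta$ (recall $\delta_{m_n}=2^{-m_n}\to 0$). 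Consequently,
\begin{align*}
\limsup_{n\to\infty}\frac{1}{\delta_{m_n}}\bigg(\varphi(\mu_n)-\Tc_{\delta_{m_n}}\varphi(\mu_n)\bigg)\le 0,
\end{align*}
where $\Tc_h$ is the operator introduced in the proof of Proposition~\ref{prop:consistency}.

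Finally, since $(\delta_{m_n},\mu_n)\to(0+,\bar\mu)$, the joint liminf appearing in Proposition~\ref{prop:consistency} is dominated by the liminf along this specific sequence, yielding
\begin{align*}
H\left(\bar\mu,\varphi(\bar\mu),\piann{\varphi}{\mu}(\cdot,\cdot;\bar\mu)\right)
\le \liminf_{n\to\infty}\frac{1}{\delta_{m_n}}\bigg(\varphi(\mu_n)-\Tc_{\delta_{m_n}}\varphi(\mu_n)\bigg)\le 0,
\end{align*}
which is the desired conclusion. The proof is therefore essentially an assembly of the preceding three results; the only delicate point is to check that the error term arising from the ``approximate maximisation'' in Lemma~\ref{lem:pick_mu_n_continuity}, namely $\kappa_n$, is indeed killed when divided by $\delta_{m_n}$, which succeeds thanks to the factor $1-e^{-\beta\delta_{m_n}}=O(\delta_{m_n})$ appearing naturally on the right-hand side of the DPP inequality. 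The main conceptual obstacle (establishing the DPP and consistency for our infinite-dimensional measure-valued dynamics) has already been handled in the preceding subsections, via the stability results of Section~4 and the It\^o formula of \cite{cox_2024}.
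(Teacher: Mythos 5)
Your proof is correct and follows essentially the same route as the paper, assembling Lemma~\ref{lem:pick_mu_n_continuity}, Corollary~\ref{cor:DPP_weak} and Proposition~\ref{prop:consistency} into the same chain of inequalities via the characterisation \eqref{eq:subsolution_alternative_continuity}. The only cosmetic difference is that the paper dispatches the Dirac-measure case $\bar\mu\in\Pc^s$ separately via Remark~\ref{rem:boundary}, whereas your argument treats all $\bar\mu\in\Pc$ uniformly, which is equally valid since the three cited results apply without exception.
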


\begin{proof}
For $\bar\mu\in\Pc^s$, the subsolution property reduces to $\beta V^+(\bar\mu)\le\inf_{v\in\Uc}k(\bar\mu,v)$, which for Dirac measures holds by the definition of $V^+$; \cf{} Remark~\ref{rem:boundary}.
Let $\bar\mu\in\Pc\setminus\Pc^s$ and consider $\varphi\in C^2(\Pc)$ such that $\varphi(\bar\mu)=V^+(\bar\mu)$ and $V^+-\varphi<0$ on $D_{\bar\mu}\setminus\{\bar\mu\}$.
By Corollary~\ref{cor:DPP_weak}, we can then find sequences $m_n\to\infty$ and $\mu_n\to\bar\mu$, such that 
\begin{align*}
        0
        &\ge
        \liminf_{n\to\infty}
        \frac{1}{\delta_{m_n}}\left\{
        \varphi(\mu_n)
        -\inf_{v\in\Uc}
        \E\left[\int_0^{\delta_{m_n}}e^{-\beta s}k\left(\xi^{v;\mu_n}_s,v\right)\ds+e^{-\beta\delta_{m_n}}\varphi\left(\xi^{v;\mu_n}_{\delta_{m_n}}\right)\right]\right\}\\
        &\ge
        \liminf_{\substack{h\downarrow 0\\\mu\to\bar\mu}}
        \frac{1}{h}\bigg\{\varphi(\mu)-\inf_{v\in\Uc}
        \E\left[\int_0^h e^{-\beta s}k\left(\xi^{v;\mu}_s,v\right)\ds+e^{-\beta h}\varphi\left(\xi^{v;\mu}_h\right)\right]\bigg\}\\
        &\ge
        H\left(\bar\mu,\varphi(\bar\mu),\piann{\varphi}{\mu}(\cdot,\cdot;\bar\mu)\right),
\end{align*}
where the last inequality follows from Proposition~\ref{prop:consistency}. According to \eqref{eq:subsolution_alternative_continuity}, this completes the proof. 
\end{proof}

    \subsection{Proof of main result}\label{sec:proof_main}

    We first recall two results which ensure that the weak value function is a viscosity solution of the HJB equation and that we do have comparison for this equation.
    To this end, given $N$ points $x_1,\dots,x_N\in\supp(\mu_0)$, note that any given function $u:\Pc\to\R$ induces a function $\tilde u:\Delta^{N-1}\to\R$ defined by 
    \begin{align}\label{eq:reduction}
        \tilde u(p_1,\dots,p_N)
        = 
        u(p_1\delta_{x_1}+\dots+p_n\delta_{x_n}).
    \end{align}
    Moreover, by Lemma~\ref{lem:U}, $\{h(v,x):v\in\Uc,x\in\supp(\mu_0)\}$ is a bounded subset of $\R$ and $h(v,\cdot)$ is continuous on $\supp(\mu_0)$ for $v\in\Uc$. By the standing Assumption~\ref{ass:U}, we also have that $\mu\mapsto k(\mu,v)$ is continuous on $\Pc$ uniformly in $v\in\Uc$. 
    Hence, Assumptions (i)--(iii) of \cite[Theorem~6.2]{cox_2024} and (i) and (ii) of \cite[Theorem~9.1]{cox_2024} are satisfied:

    \begin{theorem}[Theorem~6.2 in \cite{cox_2024}]\label{thm:Vweak_visc}
        The function $V^{weak}$ is a viscosity solution of \eqref{eq:hjb}.
    \end{theorem}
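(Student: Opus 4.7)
My plan is to establish both viscosity inequalities for $V^{weak}$ by combining a dynamic programming principle (DPP) with the It\^o formula \cite[Theorem~5.1]{cox_2024}, in close analogy with the proof of Theorem~\ref{thm:V+_visc} but without the discretisation layer. First I would prove that, for every $\mu\in\Pc$ and every bounded stopping time $\tau$,
\begin{align*}
V^{weak}(\mu)=\inf_{(\xi,u)\in\Ac^{weak}(\mu)}\E\bigg[\int_0^\tau e^{-\beta t}k(\xi_t,u_t)\dt+e^{-\beta\tau}V^{weak}(\xi_\tau)\bigg].
\end{align*}
The weak formulation makes this comparatively clean: given $(\xi,u)$ on $[0,\tau]$, an $\varepsilon$-optimal continuation starting from a regular conditional version of $\xi_\tau$ can be attached by enlarging the probability space, and a symmetric construction yields the opposite inequality. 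No measurable selection of feedback controls is needed, in contrast to Corollary~\ref{cor:optimiser_existence}.

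For the subsolution inequality, fix $\bar\mu\in\Pc\setminus\Pc^s$ and a test function $\varphi\in C^2(\Pc)$ with $\varphi\ge V^{weak}$ on $D_{\bar\mu}$ and equality at $\bar\mu$. For any constant action $v\in\Uc$ the pair $(\xi^{v;\mu},v)$ is weak admissible, so applying the DPP at $\mu\preceq\bar\mu$ and substituting $V^{weak}\le\varphi$ gives
\begin{align*}
\varphi(\mu)\le\E\int_0^h e^{-\beta t}k(\xi^{v;\mu}_t,v)\dt+e^{-\beta h}\E\,\varphi(\xi^{v;\mu}_h).
\end{align*}
Applying the It\^o formula as in \eqref{eq:consistency_dynkin}, dividing by $h$, letting $h\downarrow 0$ and $\mu\to\bar\mu$ (justified by Theorem~\ref{thm:cont_MVM_poly} together with the uniform continuity from Lemma~\ref{lem:U} and Assumption~\ref{ass:U}, exactly as in the proof of Proposition~\ref{prop:consistency}), and finally taking the supremum over $v$, delivers $H(\bar\mu,\varphi(\bar\mu),\piann{\varphi}{\mu}(\cdot,\cdot;\bar\mu))\le 0$. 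The boundary case $\bar\mu\in\Pc^s$ is immediate from Remark~\ref{rem:boundary}.

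For the supersolution inequality, given $\varphi\in C^2(\Pc)$ with $\varphi\le V^{weak}$ on $D_{\bar\mu}$ and $\varphi(\bar\mu)=V^{weak}(\bar\mu)$, for each $h>0$ I would pick an $\varepsilon h$-optimal weak admissible control $(\xi^\varepsilon,u^\varepsilon)$ and invoke the other half of the DPP, then substitute $\varphi\le V^{weak}$ and apply It\^o's formula. Bounding the running integrand below pointwise by $\inf_{v\in\Uc}\bigl\{L\varphi(\xi^\varepsilon_s,v)+k(\xi^\varepsilon_s,v)\bigr\}$ and letting $h\downarrow 0$ then yields $H(\bar\mu,\varphi(\bar\mu),\piann{\varphi}{\mu}(\cdot,\cdot;\bar\mu))\ge 0$. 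The hardest step in this scheme is the DPP itself: concretely, verifying that regular conditional distributions of a weak admissible control again produce a weak admissible control under concatenation, and arranging for an $\varepsilon$-optimal continuation to depend measurably on $\xi_\tau$. Once the DPP is in hand the analytic passages are direct adaptations of those already carried out in Proposition~\ref{prop:consistency} and Theorem~\ref{thm:V+_visc}, with only cosmetic adjustments since here one works with general weak admissible controls rather than piecewise-constant ones.
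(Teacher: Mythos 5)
Your proposal reproves a result that the paper does not actually reprove. The theorem is explicitly headed ``Theorem~6.2 in \cite{cox_2024}'': the intended content here is an \emph{import} of a known result, and the paper's proof is a three-line verification of hypotheses. Specifically, the paper observes (i) that by Lemma~\ref{lem:U} the set $\{h(v,x):v\in\Uc,x\in\supp(\mu_0)\}$ is bounded and $h(v,\cdot)$ is continuous on $\supp(\mu_0)$, and (ii) that by the standing Assumption~\ref{ass:U} the cost $k(\cdot,v)$ is continuous, and then notes that these exactly match Assumptions (i)--(iii) of \cite[Theorem~6.2]{cox_2024}, from which the conclusion follows directly.

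Your sketch --- weak DPP plus It\^o formula, subsolution via constant controls and Proposition~\ref{prop:consistency}-type consistency, supersolution via $\varepsilon h$-optimal weak controls --- is essentially a reconstruction of how \cite{cox_2024} proves its Theorem~6.2, not of what the present paper does. It is not wrong as a line of attack, and you correctly isolate the DPP for weak controls as the genuinely delicate ingredient (measurable selection of $\varepsilon$-optimal continuations, concatenation of weak tuples). But rebuilding that machinery here would be redundant: the theorem already exists in the literature, the setting of this paper is designed to fit into it, and the only task is the hypothesis check. If you were asked to supply the internal proof of \cite[Theorem~6.2]{cox_2024}, your outline would be a reasonable starting point, though the supersolution direction would need more care in passing from the pointwise lower bound $\inf_v\{L\varphi+k\}$ to the infimum in $H$ inside the limit $h\downarrow0$. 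As written for \emph{this} paper, though, the correct answer is simply: verify the three standing assumptions and cite.
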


    \begin{theorem}[Theorem~9.1 in \cite{cox_2024}]\label{thm:comparison}
    Let $u$ and $v$ be viscosity sub- and supersolutions, respectively, of \eqref{eq:hjb}.
    Suppose the following hold:
    \begin{enumerate}
        \item for any $N$ points $x_1,\dots,x_N\in\supp(\mu_0)$, the functions $\tilde u$ and $\tilde v$ obtained from $u$ and $v$ via \eqref{eq:reduction} are, respectively, upper and lower semicontinuous on $\Delta^{N-1}$;
        \item for any $\mu\in\Pc$, there exists a sequence of finitely supported $\mu_n\in\Pc$ such that
    \begin{align*}
        u(\mu)\le\liminf_{n\to\infty}u(\mu_n)
        \quad\textrm{and}\quad
        v(\mu)\ge\limsup_{n\to\infty}v(\mu_n).
    \end{align*}
    \end{enumerate}
    Then, $u\le v$. 
    In particular, \eqref{eq:hjb} admits a unique continuous viscosity solution. 
    \end{theorem}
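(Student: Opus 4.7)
This result is recalled from \cite{cox_2024}, so in the present paper the proof is merely a pointer to that reference. Nonetheless, the natural strategy, presumably carried out there, is a two-step reduction from the infinite-dimensional setting to a finite-dimensional HJB on a simplex, followed by a classical comparison argument.

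The plan is first to use condition (ii) to reduce the comparison on all of $\Pc$ to comparison on measures with a common finite support. For any $\mu \in \Pc$, with approximating sequences $\mu_n^u$ and $\mu_n^v$ as in (ii), one may, by refining supports, arrange these to share a common finite support $\{x_1,\dots,x_N\}$; then the bounds
$$
u(\mu) \le \liminf_n u(\mu_n^u), \qquad v(\mu) \ge \limsup_n v(\mu_n^v)
$$
yield $u(\mu) \le v(\mu)$ as soon as the inequality is known on the class of measures supported on $\{x_1,\dots,x_N\}$.

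Next, on that class, Proposition~\ref{thm:cont_MVM_finite}(i) identifies the MVM dynamics \eqref{eq:sde} with a classical $N$-dimensional SDE on the simplex $\Delta^{N-1}$ whose diffusion coefficient $\tilde\sigma(v,\cdot)$ is Lipschitz uniformly in $v\in\Uc$. Correspondingly, \eqref{eq:hjb} restricts to a classical second-order degenerate elliptic HJB for $\tilde u, \tilde v$ on $\Delta^{N-1}$. I would then verify that viscosity sub/supersolutions of \eqref{eq:hjb} in the sense of Definition~\ref{def:viscosity} descend to classical viscosity sub/supersolutions of this finite-dimensional equation; condition (i) supplies the required semicontinuity on $\Delta^{N-1}$, and the partial order $\preceq$ corresponds exactly to restriction to sub-simplices obtained by zeroing out coordinates. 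On the compact set $\Delta^{N-1}$ a standard Crandall--Ishii--Lions doubling-of-variables argument, using the uniform Lipschitzness of $\tilde\sigma$ and the bounded uniform continuity of $k$ from Assumption~\ref{ass:U}, closes the comparison; uniqueness of a continuous viscosity solution is then immediate.

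The main obstacle is precisely the descent step: making the passage from the MVM-valued viscosity framework (with $C^2(\Pc)$ test functions and the $\preceq$-order) to the classical finite-dimensional framework rigorous. One must, in particular, lift smooth touching functions on the simplex to test functions $\varphi \in C^2(\Pc)$ whose linear functional derivatives at the touching point reproduce the correct gradient and Hessian, and handle the boundary $\partial\Delta^{N-1}$ with care: at its vertices the diffusion degenerates and the equation collapses to the algebraic identity of Remark~\ref{rem:boundary}, while on interior faces one must ensure that touching restricted to $\preceq$-subfaces yields the classical viscosity inequality on those faces as well.
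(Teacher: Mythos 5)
Your opening claim that "the proof is merely a pointer to that reference" undersells what the paper's proof actually does: it verifies that the structural hypotheses of \cite[Theorem~9.1]{cox_2024} are met in the present setting (boundedness of $\{h(v,x)\}$ via Lemma~\ref{lem:U}; uniform continuity of $k(\cdot,v)$ via Assumption~\ref{ass:U}), notes that Lemmas~9.2 and~9.3 of \cite{cox_2024} remain valid under the weaker semicontinuity hypothesis~(i), and records the key deduction from~(ii), namely $(u-v)(\mu)\le\liminf_{n\to\infty}(u-v)(\mu_n)$. None of these verification steps appear in your proposal, and they are the actual content being supplied here; the heavy lifting (the finite-dimensional comparison argument itself) is what is genuinely outsourced.

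Your high-level strategy --- reduce via~(ii) to finitely supported measures, identify the MVM dynamics there with the Lipschitz SDE on $\Delta^{N-1}$ from Proposition~\ref{thm:cont_MVM_finite}, and run a finite-dimensional comparison --- is the right shape and is indeed the plan of \cite{cox_2024}, and you correctly flag the descent step (lifting simplex test functions to $C^2(\Pc)$ and respecting the $\preceq$-order) as the delicate part. However, there is a substantive misreading of hypothesis~(ii): you speak of two approximating sequences $\mu_n^u$, $\mu_n^v$ and propose to ``refine supports'' to give them a common finite support. Condition~(ii) provides a \emph{single} sequence $\mu_n$ that simultaneously achieves both the $\liminf$ and $\limsup$ bounds; this is essential, since the argument needs $u(\mu_n)\le v(\mu_n)$ evaluated at the same measure, and one cannot in general merge the supports of two unrelated approximating sequences. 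With two independent sequences, the reduction would not close. The paper's formulation $(u-v)(\mu)\le\liminf_n(u-v)(\mu_n)$ makes the single-sequence structure explicit and is exactly what you should have written. A more minor imprecision: the $\mu_n$ do not share a common support $\{x_1,\dots,x_N\}$; one needs comparison on all finite-support simplices, with the relevant $N$ varying with $n$.
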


    The formulation of this result is slightly different to that stated in \cite{cox_2024} since we impose somewhat weaker continuity assumptions on $u,v$. In addition, we work on a compact domain. We justify these modifications as follows:
    Note that \cite[Lemma~9.2 and 9.3]{cox_2024} still hold when relaxing their continuity assumptions on $\tilde u,\tilde v$ to only imposing our (i). Moreover, by our (ii),
    $$
    (u-v)(\mu)\le \liminf_{n\to\infty}(u-v)(\mu_n). 
    $$
    The theorem stated above can then be deduced along the same lines as \cite[Theorem~9.1]{cox_2024} (making obvious modifications to account for the fact that our $\Pc$ is a subset of the space considered therein).

    We recall that $V^+$ is upper semicontinuous on $\Pc$; we now
    argue that it is even continuous. Indeed, note that
    \begin{align}\label{eq:cont_V_+}
    \left|V^+(\mu)-V^+(\nu)\right|
    \le
    \sup_{n\in\Nb}\left|V^{n}(\mu)-V^{n}(\nu)\right|
    \le
    \sup_{n\in\Nb}\sup_{u\in\Ac^n}\big|\hat J(u;\mu)-\hat J(u;\nu)\big|.
    \end{align}
    It thus suffices to argue that the function $\hat J(u;\cdot)$ is continuous on $\Pc$ uniformly in $u\in\cup_{n\in\Nb}\Ac^n$; this follows however by use of the same arguments as used to prove Proposition~\ref{prop:properties_infinite}. In particular, $V^+$ satisfies the conditions on $u$ stated in Theorem~\ref{thm:comparison}. We next establish continuity properties of the weak value function:
    
    \begin{lemma}\label{lem:cont_V_weak}
        Assumptions (i) and (ii) of Theorem~\ref{thm:comparison} hold for $v=V^{weak}$.
    \end{lemma}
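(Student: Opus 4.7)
The plan is to establish full continuity of $V^{weak}$ on $\Pc$; both assumptions of Theorem~\ref{thm:comparison} will then follow at once: continuity yields (i), and (ii) follows by choosing any sequence of finitely supported $\mu_n\in\Pc$ with $\Wc(\mu_n,\mu)\to 0$, which exists since $\supp(\mu_0)$ is compact. The key ingredient is the stability of the filtering SDE~\eqref{eq:sde} with respect to its initial condition, supplied by Proposition~\ref{thm:cont_MVM_finite} in the finite-support case and by Theorem~\ref{thm:cont_MVM_poly} in general.

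The unifying construction is as follows. Given $\mu,\nu\in\Pc$ and an $\varepsilon$-optimal weak admissible control $(\Omega,\Fc,\Fb,\P,W,\xi,u)$ for $\mu$, I solve~\eqref{eq:sde} on the same probability space, using the same $u$ but starting from $\nu$, to obtain a process $\xi^\nu$; pathwise uniqueness is ensured by Proposition~\ref{thm:cont_MVM_finite}(i) in the finite-support setting, and by the Kunita-type existence result cited in Section~\ref{sec:piecewise_constant} for general compactly supported initial data. Then $(\Omega,\Fc,\Fb,\P,W,\xi^\nu,u)$ is a weak admissible control for $\nu$. Bounding $V^{weak}(\nu)$ by the associated cost, truncating the time integral at some $T$ with $\|k\|_\infty e^{-\beta T}/\beta<\varepsilon$ (possible by boundedness of $k$), and invoking Jensen together with a concave modulus of continuity $w$ of $k(\cdot,v)$ (uniform in $v$ by Assumption~\ref{ass:U}), yields
\begin{equation*}
V^{weak}(\nu)-V^{weak}(\mu)\le 2\varepsilon+\int_0^T e^{-\beta t}\,w\bigl(\E\,\Wc(\xi^\nu_t,\xi_t)\bigr)\,\dt.
\end{equation*}

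For (i), restricting to $\mu,\nu\in\Pc_N$ and noting that on $\Pc_N$ the Wasserstein distance is controlled by the Euclidean distance of atom weights, Proposition~\ref{thm:cont_MVM_finite}(ii) immediately gives $\E\,\Wc(\xi^\nu_t,\xi_t)\le C\|\mu-\nu\|$ uniformly in $t\in[0,T]$, so that symmetrising in $\mu,\nu$ and sending $\varepsilon\to 0$ delivers continuity of $\tilde V^{weak}$ on $\Delta^{N-1}$. For (ii), applying the same construction to $\nu=\mu_n\to\mu$ in $\Wc$, Theorem~\ref{thm:cont_MVM_poly} provides, along any subsequence, a further subsequence along which $\Wc(\xi^{\mu_n}_t,\xi_t)\to 0$ a.s.; since $\Pc$ is compact and $\Wc$ therefore bounded, bounded convergence upgrades this to $\E\,\Wc(\xi^{\mu_n}_t,\xi_t)\to 0$ along the subsequence, and the usual subsequence principle propagates convergence to the full sequence. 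A further application of bounded convergence in $t\in[0,T]$ sends the integral to $0$, yielding $\limsup_n V^{weak}(\mu_n)\le V^{weak}(\mu)+2\varepsilon$, and letting $\varepsilon\to 0$ completes (ii).

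The main obstacle lies in handling the modulus-of-continuity integral in part (ii): Theorem~\ref{thm:cont_MVM_poly} yields only subsequence-wise almost-sure convergence of the filter in Wasserstein distance, so the argument must proceed via the subsequence principle combined with bounded convergence. The quantitative stability available in the finite-support case circumvents this complication and is what renders part~(i) considerably cleaner than part~(ii).
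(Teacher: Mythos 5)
Your proposal is correct and follows essentially the same route as the paper: given an $\varepsilon$-optimal weak control for one initial law, you re-solve \eqref{eq:sde} from the other initial law on the same space and compare costs, using Proposition~\ref{thm:cont_MVM_finite} for the quantitative estimate on $\Pc_N$ in part (i) and Theorem~\ref{thm:cont_MVM_poly} with the subsequence principle in part (ii). The only cosmetic difference is that you pass to the limit in the time integral via dominated convergence in $t$, whereas the paper first invokes the submartingale property of $\Wc(\xi^{u;\mu},\xi^{u;\nu})$ to collapse the integral to a single time $T$; also, your opening promise to prove \emph{full} continuity of $V^{weak}$ on $\Pc$ is not actually carried out (nor needed) --- what you prove, correctly, is continuity on each $\Pc_N$ together with the one-sided $\limsup$ inequality of Assumption~(ii).
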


\begin{proof}
    (i)\;\;
    For a finitely supported $\mu\in\Pc$, according to Proposition~\ref{thm:cont_MVM_finite} (i), given a tuple $(\Omega,\Fc,\Fb,\P,W,u)$ such that $(\Omega,\Fc,\Fb,\P)$ is a filtered probability space, $W$ is an $(\Fb,\P)$-Brownian motion, and $u$ is an $\Uc $-valued and $\Fb$-progressively measurable process, there exists a pathwise unique solution to \eqref{eq:sde} equipped with the initial condition $\mu$. 
    Hence,
    \begin{align*}
        V^{weak}(\mu)=\inf_{(\Omega,\Fc,\Fb,\P,W,u)}\E\int_0^\infty e^{-\beta t}k(\xi^{u;\mu}_t,u_t)\dt,
    \end{align*}
    where the infimum is taken over all such tuples and the process $\xi^{u;\mu}$ denotes that associated solution to \eqref{eq:sde}.
    In consequence, for any two finitely supported measures $\mu,\nu\in\Pc$, 
    \begin{align*}
        \left|V^{weak}(\mu)-V^{weak}(\nu)\right|
        \le
        \sup_{(\Omega,\Fc,\Fb,\P,W,u)}\E\int_0^\infty e^{-\beta t}\left|k(\xi^{u;\mu}_t,u_t)-k(\xi^{u;\nu}_t,u_t)\right|\dt.
    \end{align*}
    Moreover, for any $\varepsilon>0$, there exists some $T>0$ such that, for any finitely supported measures $\mu,\nu\in\Pc$, and any tuple $(\Omega,\Fc,\Fb,\P,W,u)$,
    \begin{align*}
        \E\int_0^\infty e^{-\beta t}\left|k(\xi^{u;\mu}_t,u_t)-k(\xi^{u;\nu}_t,u_t)\right|\dt
    &\le 
        \E\int_0^T \left|k(\xi^{u;\mu}_t,u_t)-k(\xi^{u;\nu}_t,u_t)\right|\dt
        +\varepsilon\\
    &\le 
        Tw\left(\E\;\Wc\left(\xi^{u;\mu}_T,\xi^{u;\nu}_T\right)\right)
        +\varepsilon,
    \end{align*}
    where we used that, by the standing Assumption~\ref{ass:U}, $\mu\mapsto k(\mu,v)$ is continuous, uniformly in $v\in\Uc$, and thus admits a concave and continuous modulus of continuity $w$ on the compact set $\Pc$, and the fact that $\Wc(\xi^{u;\mu},\xi^{u;\nu})$ is a submartingale (\cf{} \cite[Proposition~4.1.(i)]{beiglbock_2017}).
    
    Restricting now to $\mu,\nu\in\Pc(\{x_1,\dots,x_N\})$ and applying Proposition~\ref{thm:cont_MVM_finite} (ii), we have that
    \begin{align*}
        \E\Wc\left(\xi^{u;\mu}_T,\xi^{u;\nu}_T\right)
    &\le 
        \max_{i\neq j}|x_i-x_j|\sqrt{N}\E\left\|((\xi^{u;\mu}_T-\xi^{u;\nu}_T)(\{x_1\}),\dots,(\xi^{u;\mu}_T-\xi^{u;\nu}_T)(\{x_N\}))\right\|\\
    &\le 
        \max_{i\neq j}|x_i-x_j|\sqrt{N}c\left\|((\mu-\nu)(\{x_1\}),\dots,(\mu-\nu)(\{x_1\}))\right\|,
    \end{align*}
    where $\|\cdot\|$ denotes the Euclidean norm and $c$ is a constant depending on $T$, $N$ and $\sup\{h(v,x):v\in\Uc,x\in\supp(\mu_0)\}$.
    Since, with the identification between $\alpha\in\Delta^{N-1}$ and $\mu\in\Pc(\{x_1,\dots,x_N\})$ given by $\mu=\alpha_1\delta_{x_1}+\dots+\alpha_N\delta_{x_N}$, the Euclidean and the Wasserstein distances are topologically equivalent, and since the obtained bound is independent of the chosen tuple, this yields continuity of $V^{weak}$ on $\Pc(\{x_1,\dots,x_N\})$.

    (ii)\;\;
    Let $\mu\in\Pc$ and let $\mu_k\in\Pc$ be a sequence of finitely supported measures such that $\Wc(\mu_k,\mu)\to 0$.
    For $\varepsilon>0$, let $(\Omega,\Fc,\Fb,\P,W,\xi,u)\in\Ac^{weak}(\mu)$ such that
    \begin{align*}
        \E\int_0^\infty e^{-\beta t}k(\xi_t,u_t)\dt
        \le V^{weak}(\mu)+\varepsilon/2.
    \end{align*}
    In turn, let $\xi^{u;\mu_k}$ be the solution to \eqref{eq:sde} constructed on the same probability space and with respect to the same control $u$ but for the initial condition $\mu_k$; it exists and is pathwise unique by Proposition~\ref{thm:cont_MVM_finite} (i). Then, there exists some $T>0$, such that
    \begin{align*}
        \limsup_{k\to\infty}V^{weak}(\mu_k)-V^{weak}(\mu)
        \le 
        \limsup_{k\to\infty}
        \E\int_0^T e^{-\beta t}\left(k(\xi^{u;\mu_k}_t,u_t)-k(\xi_t,u_t)\right)\dt+\varepsilon.
    \end{align*}
    Making use of Theorem~\ref{thm:cont_MVM_poly} and the standing Assumption~\ref{ass:U}, and arguing along the same lines as in the proof of Proposition~\ref{prop:properties_infinite}, we obtain that the limsup on the right-hand side must equal zero. Since $\varepsilon$ was arbitrarily chosen, this completes the proof.
    \end{proof}

    We are now ready to conclude the proof of our main result. 
   
    \begin{proof}[Proof of Theorem~\ref{thm:main}]
    Given $\varepsilon>0$, let $n\in\Nb$ such that $V^n(\mu_0)\le V^+(\mu_0)+\varepsilon$ and let $\hat u^n$ be the function provided by Corollary~\ref{cor:optimiser_existence} for which
    $
    \hat J(u^{*,n};\mu_0)\le V^n(\mu_0)+\varepsilon
    $
    when $u^{*,n}\in\Ac^n$ is defined by \eqref{eq:optimal_u} in terms of $\hat u^n$.
    Define, in turn, the feedback control $u^*$ by \eqref{eq:closing_constant} in terms of $\hat u^n$ (recall the relations \eqref{eq:y} and \eqref{eq:filter}); since the feedback function is piecewise constant, this control is well defined and belongs to $\mathcal{A}$. 
    
    We recall that $\pi^{u^*}_0=\mu_0$ and that $\pi^{u^*}$ satisfies
    \eqref{eq:pi}. Making again use of the fact that the control is
    piecewise constant, according to \cite[Theorem~2.1]{kunita_1971},
    we have pathwise uniqueness for the SDE \eqref{eq:pi} with $u$
    given by \eqref{eq:closing_constant} (viewing \eqref{eq:pi} as an
    equation for $\pi^{u^*}$ with $I^{u^*}$ a fixed
    $(\mathcal{Y}^{u^*},\P)$-Brownian motion)\footnote{Or, equivalently, for
    the SDE \eqref{eq:dynamics_controlled} with $u$ given by
    \eqref{eq:optimal_u}.}. We therefore also have uniqueness in law
    (\cf{} \cite[proof~of~Theorem V.3]{szpirglas_1978}) and it follows
    that $J(u^*)=\hat J(u^{*,n};\mu_0)$. In consequence,
    \begin{align*}
        J(u^*)\le V^+(\mu_0)+2\varepsilon.
    \end{align*}
    Since $\varepsilon>0$ was arbitrarily chosen, we obtain in particular that
    \begin{align}\label{eq:proof_main_ordering_mu0}
    V^{weak}(\mu_0) \le \inf_{u\in\Ac}J(u) \le V^+(\mu_0).
    \end{align} 

    Next, by Theorem~\ref{thm:V+_visc} and Theorem~\ref{thm:Vweak_visc}, $u=V^+$ and $v=V^{weak}$ are sub- and supersolutions, respectively, of \eqref{eq:hjb}.
    Moreover, by Lemma~\ref{lem:cont_V_weak} and the preceding discussion (\cf{} \eqref{eq:cont_V_+}), the continuity assumptions of Theorem~\ref{thm:comparison} hold for $u=V^+$ and $v=V^{weak}$. Application of this comparison result then yields $V^+\le V^{weak}$ on $\Pc$, which implies that both \eqref{eq:proof_main_ordering_continuity} and \eqref{eq:proof_main_ordering_mu0} hold as equalities. 
    Applying Theorem~\ref{thm:comparison} once again, making use of the continuity of $V^+$ (\cf{} \eqref{eq:cont_V_+}), yields the uniqueness and continuity claims.
    \end{proof}

\appendix

\section{Proof of the DPP}\label{sec:dpp}

We first argue that a certain `pseudo Markov property' holds within our context. To this end, let $n\in\Nb$ and recall the definition of $\delta_n$ from Section~\ref{sec:piecewise_constant}. Given a control $u\in\Ac^n$, we define a new control $u^{\delta_n,\omega}\in\Ac^n$, for $\omega\in\Omega$, by
    \begin{align*}
        u^{\delta_n,\omega}_t(\tilde\omega)
        =u_{\delta_n+t}(\omega\otimes_{\delta_n}\tilde\omega),
        \quad (t,\tilde\omega)\in[0,\infty)\times\Omega,
    \end{align*}
    where 
    $$
    (\omega\otimes_{\delta_n}\tilde\omega)(t)=\omega(t)\ind_{[0,\delta_n)}(t)+(\omega(\delta_n)+\tilde\omega(t-\delta_n)-\tilde\omega(0))\ind_{[\delta_n,\infty)}(t),
    \quad t\ge 0. 
    $$
The following result can then be deduced following the arguments in \cite{claisse_2016} and making use of the pathwise uniqueness of the flow solving \eqref{eq:dynamics_controlled} (the arguments simplify considerably since we here consider piecewise constant controls):

\begin{lemma}\label{lem:pseudo_markov}
    Let $n\in\Nb$ and $u\in\Ac^n$. Then, for every $\mu\in\Pc$ and $\P$-a.a. $\omega$,
    \begin{align*}
        \E\bigg[\int_{\delta_n}^\infty e^{-\beta(t-\delta_n)}k\left(\xi^{u;\mu}_t,u_t\right) & \dt\big|\Fc_{\delta_n}\bigg](\omega)\nonumber\\
        =
        \int_\Omega
        &\int_0^\infty e^{-\beta t}k\left(\xi^{u^{\delta_n,\omega};\xi^{u;\mu}_{\delta_n}(\omega)}_t(\tilde\omega),u^{\delta_n,\omega}_t(\tilde\omega)\right)\dt\;\di\P(\tilde\omega).
    \end{align*}
\end{lemma}

\begin{proof}[Proof of Theorem~\ref{thm:dpp}]
    `$\ge$': For any $\mu\in\Pc$ and $u\in\Ac^n$, by use of Lemma~\ref{lem:pseudo_markov} we obtain
    \begin{align}\label{eq:proof_dpp}
        \hat J(u;\mu)
        =&\;
        \E\left[\int_0^{\delta_n}e^{-\beta t}k\left(\xi^{u;\mu}_t,u_t\right)\dt+e^{-\beta\delta_n}\E\left[\int_{\delta_n}^\infty e^{-\beta(t-\delta_n)}k\left(\xi^{u;\mu}_t,u_t\right)\dt\big|\Fc_{\delta_n}\right]\right]\nonumber\\
        =&
        \int_\Omega \bigg\{\int_0^{\delta_n}e^{-\beta t}k\left(\xi^{u;\mu}_t(\omega),u_t(\omega)\right)\dt\nonumber\\
        &+e^{-\beta\delta_n}\int_\Omega\int_0^\infty e^{-\beta t}k\bigg(\xi^{u^{\delta_n,\omega};\xi^{u;\mu}_{\delta_n}(\omega)}_t(\tilde\omega),u^{\delta_n,\omega}_t(\tilde\omega)\bigg)\dt\;\di\P(\tilde\omega)\bigg\}\di\P(\omega)\\
        %\ge&
        %\int_\Omega \bigg\{\int_0^{\delta_n}e^{-\beta t}k\left(\xi^{u;\mu}_t(\omega),u_t(\omega)\right)\dt\\
        %&+e^{-\beta\delta_n}\inf_{\tilde u\in\Ac^n}\int_\Omega\int_0^\infty e^{-\beta t}k\bigg(\xi^{\tilde u,\xi^{u;\mu}_{\delta_n}(\omega)}_t(\tilde\omega),\tilde u_t(\tilde\omega)\bigg)\dt\;\di\P(\tilde\omega)\bigg\}\di\P(\omega)\\
        \ge&
        \int_\Omega \bigg\{\int_0^{\delta_n}e^{-\beta t}k\left(\xi^{u;\mu}_t(\omega),u_t(\omega)\right)\dt+e^{-\beta\delta_n}V^n\left(\xi^{u;\mu}_{\delta_n}(\omega)\right)\bigg\}\di\P(\omega)\nonumber\\
        =&\;
        \E\left[\int_0^{\delta_n}e^{-\beta t}k\left(\xi^{u;\mu}_t,u_t\right)\dt+e^{-\beta\delta_n}V^n\left(\xi^{u;\mu}_{\delta_n}\right)\right];\nonumber
     \end{align}
     taking the infimum over $u\in\Ac^n$ on both sides yields the desired inequality.

    `$\le$':
    Let $\varepsilon>0$. Thanks to the continuity properties established in Proposition~\ref{prop:properties_infinite}, following \cite{bouchard_2011}, we may apply an open covering argument to obtain, for some $N\in\Nb$, sets $A_i\subset\Pc$, $i=1,\cdots,N$, forming a partition of $\Pc$ and controls $u^i\in\Ac^n$ such that $\hat J(u^i;\mu)\le V^n(\mu)+\varepsilon$, for all $\mu\in A_i$, $i=1,\dots N$.
    Now, let $\mu\in\Pc$ and let $u\in\Ac^n$ be an arbitrary control. In turn, define a control $\hat u\in\Ac^n$ by
    \begin{equation*}
    \hat u_t(\omega)=\left\{\begin{array}{lll}
    u_t(\omega)&t\in[0,\delta_n)\\
    \sum_{i=1}^N\ind_{A_i}\left(\xi^{u;\mu}_{\delta_n}(\omega)\right)u^i_{t-\delta_n}\left(\theta_{\delta_n}\omega\right)&t\in[\delta_n,\infty)
    \end{array}\right.
    \quad (t,\omega)\in[0,\infty)\times\Omega,
    \end{equation*}
    where 
    $\theta_{\delta_n}\omega(t)
    =
    \omega(\delta_n+t)-\omega(\delta_n)$.
    Making once again use of Lemma~\ref{lem:pseudo_markov}, and the fact that, for each $i=1,\dots,N$ and $\omega\in\Omega$ such that $\xi^{u;\mu}_{\delta_n}(\omega)\in A_i$, it holds that 
    $\hat u^{\delta_n,\omega}_t(\tilde\omega)
    =
    u^i_t\left(\tilde\omega\right)$, for all
    $(t,\tilde\omega)\in[0,\infty)\times\Omega$, 
    we obtain for $\P$-a.a. $\omega$,
    \begin{align*}
       \E\bigg[\int_{\delta_n}^\infty & e^{-\beta(t-\delta_n)}k\left(\xi^{\hat u;\mu}_t,\hat u_t\right)\dt\big|\Fc_{\delta_n}\bigg](\omega)\\
        &=
        \int_\Omega\int_0^\infty e^{-\beta t}k\bigg(\xi^{\hat u^{\delta_n,\omega};\xi^{\hat u;\mu}_{\delta_n}(\omega)}_t(\tilde\omega),\hat u^{\delta_n,\omega}_t(\tilde\omega)\bigg)\dt\;\di\P(\tilde\omega)\\
        &=
        \sum_{i=1}^N\ind_{A_i}\left(\xi^{u;\mu}_{\delta_n}(\omega)\right)\left\{\int_\Omega\int_0^\infty e^{-\beta t}k\bigg(\xi^{u^i;\xi^{u;\mu}_{\delta_n}(\omega)}_t(\tilde\omega),u^i_t(\tilde\omega)\bigg)\dt\;\di\P(\tilde\omega)\right\}\\
        &\le
        \sum_{i=1}^N\ind_{A_i}\left(\xi^{u;\mu}_{\delta_n}(\omega)\right)\left\{V^n\left(\xi^{u;\mu}_{\delta_n}(\omega)\right)+\varepsilon\right\}
        = V^n\left(\xi^{u;\mu}_{\delta_n}\right)(\omega)+\varepsilon.
    \end{align*}
    In consequence,
     \begin{align*}
        V^n(\mu)
        \le&\;
        \hat J(\hat u;\mu)\\
        =&\;
        \E\left[\int_0^{\delta_n}e^{-\beta t}k\left(\xi^{\hat u;\mu}_t,\hat u_t\right)\dt+e^{-\beta\delta_n}\E\left[\int_{\delta_n}^\infty e^{-\beta(t-\delta_n)}k\left(\xi^{\hat u;\mu}_t,\hat u_t\right)\dt\big|\Fc_{\delta_n}\right]\right]\\
        \le&\;
        \E\left[\int_0^{\delta_n}e^{-\beta t}k\left(\xi^{u;\mu}_t,u_t\right)\dt+e^{-\beta\delta_n}V^n\left(\xi^{u;\mu}_{\delta_n}\right)\right]+e^{-\beta\delta_n}\varepsilon.
     \end{align*}
     Taking the infimum over $u\in\Ac^n$ and using that $\varepsilon>0$ was arbitrary, yields the claim. 
    \end{proof}

    \begin{proof}[Proof of Corollary~\ref{cor:optimiser_existence}]
    Let $\varepsilon>0$ and let $\{A_i\}_{i=1}^N\subset\Pc$ and $\{u^i\}_{i=1}^N\subset\Ac^n$ be defined as in the proof of Theorem~\ref{thm:dpp}. In turn, define $\hat u^n:\Pc\to\Uc$ by

    $$
    \hat u^n(\mu):=\sum_{i=1}^N\ind_{A_i}\left(\mu\right)u^i_0,
    \quad\mu\in\Pc.
    $$
    Note that, thanks to the properties of $u^i$, $i=1,\dots,N$, and \eqref{eq:proof_dpp}, for all $\mu\in\Pc$,
    \begin{align}\label{eq:proof_exist_control_opt_continuity}
        V^n(\mu)
        \ge
        \E\left[\int_0^{\delta_n}e^{-\beta t}k\left(\xi^{\hat u^n(\mu);\mu}_t,\hat u^n(\mu)\right)\dt+e^{-\beta\delta_n}V^n\left(\xi^{\hat u^n(\mu);\mu}_{\delta_n}\right)\right]-\varepsilon.
    \end{align}
    
    Let $\mu\in\Pc$ and define $u\equiv u^{*,n}\in\Ac^n$ by \eqref{eq:optimal_u} with respect to $\hat u^n$; it remains to argue that $u$ is `$\varepsilon$-optimal'. 
    Making use of \eqref{eq:proof_exist_control_opt_continuity} together with the fact that
    $
    \hat u^n(\xi^{u;\mu}_{\delta_n}(\omega))=u^{\delta_n,\omega}_0,
    $
    for each $\omega\in\Omega$, and Lemma~\ref{lem:pseudo_markov}, we obtain
    \begin{align*}
       V^n\left(\xi^{u;\mu}_{\delta_n}\right)(\omega)
        \ge&\;
       \int_\Omega \bigg\{\int_0^{\delta_n}e^{-\beta t}k\bigg(\xi^{\hat u^n\left(\xi^{u;\mu}_{\delta_n}(\omega)\right);\xi^{u;\mu}_{\delta_n}(\omega)}_t(\tilde\omega),\hat u^n\left(\xi^{u;\mu}_{\delta_n}(\omega)\right)\bigg)\dt\\
       &\qquad\;
       +e^{-\beta\delta_n}V^n\bigg(\xi^{\hat u^n\left(\xi^{u;\mu}_{\delta_n}(\omega)\right);\xi^{u;\mu}_{\delta_n}(\omega)}_{\delta_n}(\tilde\omega)\bigg)\bigg\}\di\P(\tilde\omega)-\varepsilon\\
       =&
       \int_\Omega \bigg\{\int_0^{\delta_n}e^{-\beta t}k\bigg(\xi^{u^{\delta_n,\omega};\xi^{u;\mu}_{\delta_n}(\omega)}_t(\tilde\omega),u^{\delta_n,\omega}_t(\tilde\omega)\bigg)\dt\\
       &\qquad\;
       +e^{-\beta\delta_n}V^n\bigg(\xi^{u^{\delta_n,\omega};\xi^{u;\mu}_{\delta_n}(\omega)}_{\delta_n}(\tilde\omega)\bigg)\bigg\}\di\P(\tilde\omega)-\varepsilon\\
       =&\;
       \E\bigg[\int_{\delta_n}^{2\delta_n}e^{-\beta (t-\delta_n)}k\left(\xi^{u;\mu}_t,u_t\right)\dt+e^{-\beta\delta_n}V^n\left(\xi^{u;\mu}_{2\delta_n}\right)\big|\Fc_{\delta_n}\bigg](\omega)-\varepsilon.
    \end{align*}
    Now, by definition, \eqref{eq:proof_exist_control_opt_continuity} holds also with $\hat u^n(\mu)$ replaced by $u$; substituting the above into this inequality, we obtain
    \begin{align*}
        V^n(\mu)
    \ge&\;
        \E\bigg[\int_0^{2\delta_n}e^{-\beta t}k\left(\xi^{u;\mu}_t,u_t\right)\dt+e^{-\beta2\delta_n}V^n\left(\xi^{u;\mu}_{2\delta_n}\right)\bigg]-\varepsilon \big(1+e^{-\beta\delta_n}\big)\\
    \ge&\;
        \E\left[\int_0^{l\delta_n}e^{-\beta t}k\left(\xi^{u;\mu}_t,u\right)\dt+e^{-\beta l\delta_n}V^n\left(\xi^{u;\mu}_{l\delta_n}\right)\right]
        -\varepsilon\big(1+\dots+e^{-\beta(l-1)\delta_n}\big),
        %-\varepsilon\big(1+e^{-\beta\delta_n}+\dots+e^{-\beta(l-1)\delta_n}\big),
     \end{align*}
     %\begin{align*}
     %   \E\bigg[\int_0^{(l-1)\delta_n}e^{-\beta t} & k\left(\xi^{u;\mu}_t,u_t\right)\dt+e^{-\beta(l-1)\delta_n}V^n\left(\xi^{u;\mu}_{(l-1)\delta_n}\right)\bigg]\\
     %   \ge&\;
     %   \E\bigg[\int_0^{l\delta_n}e^{-\beta t}k\left(\xi^{u;\mu}_t,u_t\right)\dt+e^{-\beta l\delta_n}V^n\left(\xi^{u;\mu}_{l\delta_n}\right)\bigg]-\varepsilon e^{-\beta(l-1)\delta_n}.
     %\end{align*}
    where the second inequality follows by iterated use of the same arguments and holds for any $l\in\Nb$.
    Since $V^n$ is bounded and the geometric series converges, we may conclude by sending $l$ to infinity.
    %\begin{align*}
    %    \E\int_0^\infty e^{-\beta t}k\left(\xi^{u;\mu}_t,u\right)\dt
    %    \le
    %    V^n(\mu)+\varepsilon(1-e^{-\beta\delta_n})^{-1},
    %\end{align*}
    \end{proof}

    \section{Technical justification of Example~\ref{ex:sensor}}\label{sec:example_h}

    In this section, we verify that the problem formulated in Example~\ref{ex:sensor} satisfies all the conditions required in Section~\ref{sec:formulation}.

    Due to the form of $\tilde h$, we expect to be able to rewrite $\tilde{h}(\tilde{v}, x) = \sum_{i=0}^{\infty} C_i(\tilde{v}) x^i$. Let $\Phi: I \to \mathbb{R}^{\mathbb{N}}$ be defined by $\Phi(\tilde{v}) = \big(C_0(\tilde{v}), C_1(\tilde{v}), \ldots\big)$, and set $\mathcal{U}:=\Phi(I)$. We show in Lemma~\ref{lem:example_h} that $\Phi$ is well-defined, injective, continuous, and $\sum_{i=0}^{\infty} R^{2i} C_{i}(\tilde{v})^2 \leq K$ for some $R>1$, $K>0$, which implies that  $\mathcal{U}$ is a compact subset of $\{v = (v_0, v_1,\ldots): v_i \in \mathbb{R}, i =0,1,\ldots, \sum_{i=0}^{\infty} R^{2i} v_i^2 \leq K\}$; in particular, $\Phi$ is a bijection between $I$ and $\mathcal{U}$. We then define a cost function $k: \mathcal{P} \times \mathcal{U} \to \mathbb{R}$ such that $k(\mu, \Phi(\tilde{v})) = \tilde{k}(\mu, \tilde{v})$ for all $\tilde{v} \in I$. We verify the continuity of $k$ in terms of $\mathcal{W}_1$: Since $e^{-\theta (x-\tilde{v})^2}$ is Lipschitz continuous in $x$ with a uniform Lipschitz constant over $\tilde{v}$, the fear term is continuous; an analogous argument applies to the reward term.

    It remains to prove the following lemma which was used above:
    \begin{lemma}\label{lem:example_h}      
        Let $R>1$ so that $I \subset (-R,R)$. The sensor function \eqref{eq:example h} satisfies, for some $K>0$, 
        \begin{equation}\label{eq:example h expansion}
        \tilde{h}(\tilde{v}, x) = \sum_{i=0}^{\infty} C_i(\tilde{v}) x^i, 
        \quad 
        \text{ where }
        \ 
        \sum_{i=0}^{\infty} \big(R^i C_i(\tilde{v})\big)^2 \leq K, \quad \text{for all } x, \tilde{v} \in I,  
        \end{equation}
        where $\tilde{v} \mapsto (C_0(\tilde{v}), C_1(\tilde{v}), \ldots)$ is an injective and continuous function from $I$ to $\mathbb{R}^{\mathbb{N}}$ in the product topology. 
    \end{lemma}
    \begin{proof}
        We can rewrite $\tilde{h}$ as 
        \begin{align*}
        \tilde{h}(\tilde{v}, x) 
        = 
        e^{-\lambda x^2} e^{2 \lambda \tilde{v} x} e^{-\lambda \tilde{v}^2}
        = 
        e^{-\lambda \tilde{v}^2} \sum_{m=0}^{\infty} \frac{(2 \lambda \tilde{v})^m}{m!} x^m \sum_{k=0}^{\infty} \frac{(-\lambda)^{k}}{k!}  x^{2k}  
        = 
        \sum_{i=0}^{\infty} C_i(\tilde{v}) x^i,
        \end{align*}
        where 
        $$
        C_i(\tilde{v}) = e^{-\lambda \tilde{v}^2} \sum_{j=0}^{\lfloor \frac{i}{2} \rfloor} \frac{(-\lambda)^j (2 \lambda \tilde{v})^{(i-2j)}}{j! (i-2j)!}. 
        $$
        That $\tilde{v} \mapsto (C_{0}(\tilde{v}),C_{1}(\tilde{v}),\ldots)$ is well-defined follows from the uniqueness of power series representations; that it is injective follows from the injectiveness of $\tilde{v} \mapsto \tilde{h}(\tilde{v},\cdot)$. It is evident from the above expression that each $C_i$ is continuous on $I$.
        Let $\wti{R}>R$ and define $M := e^{\lambda (\wti{R}+R)^2}$. Note that the same calculation gives 
        $$
        e^{\lambda (x+\tilde{v})^2} = \sum_{i=0}^{\infty} \wti{C}_i(\tilde{v}) x^i,
        \quad \text{ where }\ 
        \wti{C}_i(\tilde{v}) = e^{\lambda \tilde{v}^2} \sum_{j=0}^{\lfloor \frac{i}{2} \rfloor} \frac{\lambda^j (2 \lambda \tilde{v})^{(i-2j)}}{j! (i-2j)!}. 
        $$
        Hence, $\sum_{i=0}^{\infty} \wti{C}_i(\tilde{v}) \wti{R}^i = e^{\lambda (\wti{R}+\tilde{v})^2} \leq M$ for all $\tilde{v} \in I$. Since $\wti{C}_i(\tilde{v})$ and $\wti{R}$ are positive, we then have for all $i \in \mathbb{N}$ and all $\tilde{v} \in I$ that $|C_i(\tilde{v})| \leq \wti{C}_i(\tilde{v}) \leq \frac{M}{{\wti{R}}^i}$. Since $R<\wti{R}$, we conclude that
        $$
        \sum_{i=0}^{\infty} C_i(\tilde{v})^2 R^{2i}
        \leq 
        \sum_{i=0}^{\infty} \frac{M^2}{{\wti{R}}^{2i}} R^{2i}
        =
        \frac{M^2}{1 - \big(\frac{R}{\wti{R}}\big)^2}
        =: K,
        $$
        which completes the proof. 
    \end{proof}

\bibliography{ref}
\bibliographystyle{plain}

\end{document}